\theoremstyle{definition}
\newtheorem{definition}{Definition}[subsection]
\newtheorem{remark}[definition]{Remark}
\newtheorem{example}[definition]{Example}
\newtheorem{examples}[definition]{Examples}
\newtheorem{notation}[definition]{Notation}
\theoremstyle{plain}
\newtheorem{lemma}[definition]{Lemma}
\newtheorem{proposition}[definition]{Proposition}
\newtheorem{corollary}[definition]{Corollary}
\newtheorem{theorem}[definition]{Theorem}
\DeclareMathOperator{\prim}{Prim}
\def\Prim{\mathrm{Prim\, }}
\def\Hom{\mathrm{Hom}}
\def\Im{\mathop{\rm Im }}
\newcommand{\Hc}{ \mathcal{H} }
\newcommand{\A}{ \mathcal{A} }
\newcommand{\C}{ \mathcal{C} }
\newcommand{\F}{ \mathcal{F} }
\newcommand{\K}{ \mathbb{K} }
\newcommand{\comm}{ \operatorname{Comm} }
\newcommand{\id}{ \operatorname{id} }
\newcommandx*\As[1][1=1]{\begin{tikzpicture}[scale=0.5]
\draw[fill=white](0,0) circle(0.5);
\draw(0,0) node{$#1$};
\end{tikzpicture}}
\newcommandx*\Av[3][1=1,2=2,3=3]{\begin{tikzpicture}[scale=0.5]
\draw(1,1)--(0,0)--(-1,1);
\draw[fill=white](0,0) circle(0.5);
\draw[fill=white](1,1) circle(0.5);
\draw[fill=white](-1,1) circle(0.5);
\draw(0,0) node{$#1$};
\draw(1,1) node{$#3$};
\draw(-1,1) node{$#2$};
\end{tikzpicture}}
\newcommandx*\Al[3][1=1,2=2,3=3]{\begin{tikzpicture}[scale=0.5]
\draw(0,0)--(0,2);
\draw[fill=white](0,0) circle(0.5);
\draw[fill=white](0,1.2) circle(0.5);
\draw[fill=white](0,2.4) circle(0.5);
\draw(0,0) node{$#1$};
\draw(0,1.2) node{$#2$};
\draw(0,2.4) node{$#3$};
\end{tikzpicture}}
\tikzset{
    ncbar angle/.initial=90,
    ncbar/.style={
        to path=(\tikztostart)
        -- ($(\tikztostart)!#1!\pgfkeysvalueof{/tikz/ncbar angle}:(\tikztotarget)$)
        -- ($(\tikztotarget)!($(\tikztostart)!#1!\pgfkeysvalueof{/tikz/ncbar angle}:(\tikztotarget)$)!\pgfkeysvalueof{/tikz/ncbar angle}:(\tikztostart)$)
        -- (\tikztotarget)
    },
    ncbar/.default=0.5cm,
}
\tikzset{square left brace/.style={ncbar=0.5cm}}
\tikzset{square right brace/.style={ncbar=-0.5cm}}
\tikzset{round left paren/.style={ncbar=0.5cm,out=120,in=-120}}
\tikzset{round right paren/.style={\newcommand{\C}{ \mathcal{C} }ncbar=0.5cm,out=60,in=-60}}
\title[Hopf-Borel type theorem for operads]{Confluence laws and Hopf-Borel type theorem for operads}
\keywords{operad ; generalised bialgebras ; Hopf-Borel theorem ; rigidity theorem ; freeness of algebra ; $\mathfrak{S}_n$-modules}
\author{E. Burgunder \and B. Delcroix-Oger}
\thanks{The authors thank J. Mill\`es for our fruitful conversations and M. Livernet for her useful comments about our work which really helped us to improve our presentation. \\The work of the first author was supported by ANR CATHRE. The work of the second author was supported by LabEx CIMI}
\address{EB: Universit\'e Paul Sabatier\\
Institut de Math\'ematiques de Toulouse\\
118 route de Narbonne\\
F-31062 Toulouse Cedex 9 France}
\email{burgunder@math.univ-toulouse.fr}
\address{BDO: Universit\'e Paris 7 Denis Diderot \\
Institut de Recherche en Informatique Fondamentale \\
Case 7014 \\
F-75205 Paris Cedex 13 France}
\email{bdelcroix@irif.fr}
 \date{\today}
\begin{document}

\maketitle

\begin{abstract}
In 2008, Loday shed light on the existence of Hopf-Borel theorems for operads. Using the vocabulary of category theory, Livernet, Mesablishvili and Wisbauer extended such theorems to monads. In both cases, the reasoning was to start from a mixed distributive law and then to prove that it induces an isomorphism of $\mathfrak{S}$-modules to finally get a rigidity theorem. Our reasoning goes here backward: we prove that from an isomorphism of$\mathfrak{S}$-modules one can get what we called a confluence law, which generalises mixed distributive laws, and that it is enough to obtain a rigidity theorem. This enables us to show that for any operads $\mathcal{P}$ and $\mathcal{Q}$ having the same underlying $\mathfrak{S}$-module, there exists a confluence law $\alpha$ such that any conilpotent $\mathcal{P}$ co$\mathcal{Q}$-bialgebra satisfying $\alpha$ is free and cofree over its primitive elements. Our reasoning permits us to generate many new examples.
\end{abstract}

\section*{Introduction}

Distributive laws first appeared in 1969 in Beck's article \cite{Beck}. It takes its
 name from the distributivity of addition over multiplication studied in primary school. Distributive laws give a confluent way to rewrite expressions 
 mixing different products. An example of distributive laws is the one mixing the 
 commutative product and the Lie bracket in a Poisson algebra. The notion of distributive laws has been initially 
 studied by Burroni in \cite{Burroni} for algebras and by Markl in \cite{Markl2} 
 generalising this definition to operads and linking it to the topological notion of 
 Koszulness. Fox and Markl have adapted in \cite{Markl-Fox} the notion of distributive 
 laws to expressions mixing operations and cooperations: they called the obtained 
 rewriting rules \emph{mixed distributive laws}.

Let us call a $\C^c-_{\lambda}\A$-bialgebra a vector space endowed with an algebra structure encoded by an operad $\A$, a coalgebra structure encoded by an operad $\C$, or equivalently its dual  cooperad $\C^*$, with both linked through a mixed distributive law $\lambda$. Such a bialgebra satisfies, under some assumptions, a Hopf-Borel type theorem: any conilpotent $\C^c-_{\lambda}\A$-bialgebra is free and cofree over its primitive elements. Such a theorem is called a \emph{rigidity theorem}. Particular cases of rigidity theorems were proven for instance in \cite{Borel} for commutative cocommutative Hopf bialgebras, in \cite{nui} for associative coassociative bialgebras, in \cite{AsZinb} for Zinbiel coassociative bialgebras, in \cite{Foissy} for dendriform codendriform algebras and in \cite{NAPPL} for PreLie coNAP bialgebras. The general framework for this theorem was introduced by Loday in \cite{GBO}. Rigidity theorems were then further studied, for instance in \cite{Muriel} and applications can be found for example in \cite{EPLM} to compute explicit bases of algebras. While studying the general framework and its rewriting for particular symmetric operads, it became clear to the authors that the three hypotheses of this theorem had to be clarified and some further clarifications were needed in the proof.

In this article we prove that one only of the hypotheses formulated by Loday in \cite{GBO} is needed to get a rigidity theorem. This viewpoint enables us to provide an answer to a conjecture of Loday: for a given operad $\mathcal{P}$ which encoded the structure of algebra and of coalgebra of a  $\mathcal{P}^c-\mathcal{P}$ bialgebras there exists a generalisation of mixed distributive laws, called \emph{confluence laws} between the structures such that a rigidity theorem holds. In fact, by definition, to any operad is associated a family of $\mathfrak{S}_n$-modules, i.e. the underlying vector space in graduation $n$ endowed with an action of the symmetric group $\mathfrak{S}_n$. We prove here more precisely a more general result: if two operads $\mathcal{P}$ and $\mathcal{Q}$ share the same family of $\mathfrak{S}_n$-modules, then there exists a confluence law for $\mathcal{Q}^c-\mathcal{P}$ bialgebras. This enables to develop a list of brand new examples of rigidity theorem, presented in the second part of the article. We also develop the case of explicit isomorphism of $\mathfrak{S}_n$-modules between $\mathcal{P}$ and $\mathcal{P}^*$. In such a framework, we are also able to make explicit an inductive algorithm to compute the projection into the subspace of primitive elements and the confluence law associated to a bialgebra. 

This article is organised as follows. The first part focuses on the general theory and then we restrict our study to the framework where bases for operads are known. The second part develops explicit computation of rigidity theorem. 

More precisely, in the first section, we recall Loday's theorem with its three hypotheses (H0), (H1), (H2iso): for any $\mathcal{C}^c-\mathcal{A}$ bialgebra satisfying mixed distributive laws (H0), one can compute a morphism $\varphi_V: \A(V) \rightarrow \C^c(V)$, functorial in $V$ (H1). If this morphism is bijective (H2iso) then a rigidity theorem holds. We point out here that the core property is that the morphism $\varphi$ induces a family of $\mathfrak{S}_n$-modules isomorphisms $\varphi_n :\mathcal{A}(n)\to\mathcal C^*(n) $. This property induces the existence of a confluence law which is the only property needed to obtain a rigidity theorem.

Then in the second part, some explicit cases are explored. More precisely, we will fix a basis for the vector space $\mathcal{A}(n)=\C(n)$ and dualize it. We explain this construction and give a simple condition for it to satisfy the hypotheses of rigidity theorem: its compatibility to the $\mathfrak{S}$-module structure. We then list the examples in literature which can be obtained in that way before illustrating the strength of this case with a bunch of brand new examples: PreLie coPreLie, Perm coPerm, NAP coNAP, PAN coPAN, PAN coPerm, Leibniz coAssociative, Poisson coAssociative, Leibniz coZinbiel, 2as co2as or even Dipt coDipt bialgebras.

\section*{Notations}
\begin{itemize}
\item $\mathfrak{S}_n$ denotes the symmetric group of permutations on $n$ elements.
\item $\A$ and $\C$ denotes operads, $\C^*$ denotes the dual cooperad of operad $\C$.
\item An algebra structure encoded by an operad $\A$ is denoted as a $\A$-algebra.
\item A coalgebra structure encoded by an operad $\C$ (or equivalently its dual cooperad $\C^*$) is denoted as a $\C^c$-coalgebra.
\end{itemize}

\section{General Case}
We prove in this section that only one of the hypotheses for Loday's rigidity theorem on generalised bialgebras is needed to get such a theorem. Indeed, the hypothesis (H2iso) implies the hypothesis (H1) and a weaken version of (H0), which are enough to get a rigidity theorem. This new improvement in the needed hypotheses enables us to prove a conjecture of Loday and to reach many new cases.
 
 We first recall Loday's formulation of rigidity theorem before studying the intertwining between hypotheses of the theorem. We then study the case when bases are orthogonal, so that we can compute explicitly confluence laws and idempotent, which was one problem raised in \cite{GBO}. We finally give a new formulation of rigidity theorem.

 \subsection{Loday's rigidity theorem.}

Let  $\K$ be a field of characteristic $0$. Before introducing Loday's rigidity theorem, we recall from \cite{Operads} some needed definitions.

\begin{definition}[\cite{Operads}, 5.2.1] An \emph{(symmetric algebraic) operad} $\A=(\A, \gamma, \nu)$ is a $\mathfrak{S}$-module $\A={\A(n)}_{n \geq 0}$ endowed with morphisms of $\mathfrak{S}$-modules $\gamma : \A \circ \A \rightarrow \A$, called \emph{composition map} and $\eta : I \rightarrow \A$, called the \emph{unit map}, such that $\gamma$ and $\eta$ satisfy associativity and unitality:
\begin{align}
\gamma (\gamma \circ \operatorname{Id}) & = \gamma (\operatorname{Id} \circ \gamma) \\ 
\gamma (\eta \circ \operatorname{Id}) & = \gamma (\operatorname{Id} \circ \eta)
\end{align}

\end{definition}

Let $\mathcal A, \mathcal C$ be two algebraic operads. In this article, we will only consider \emph{connected} operads, i.e. such that $\A(0)=\C(0)=\emptyset$ and $\A(1)=\C(1)=\K.\operatorname{id}$ and such that $\A(n)$ and $\C(n)$ are  finite dimensional.

Let us first recall the definition of an algebra, a cooperad and a coalgebra over an operad.
\begin{definition}[\cite{Operads}, 5.2.3] \label{defcog}An \emph{algebra} over an operad $\A$ is a vector space $A$ equipped with a $\mathfrak{S}_n$-equivariant morphism $m^n_A: \A(n) \otimes A^{\otimes n} \rightarrow A $. We denote the \emph{free algebra} over an operad $\A$ whose vector space of generators is $V$ by 
\begin{equation}
\A(V) = \bigoplus_{n \geq 1} \A(n) \otimes_{\mathfrak{S}_n} \left( \underbrace{V \otimes \ldots \otimes V}_\text{$n$} \right)= \bigoplus_{n \geq 1} \A(n) \otimes_{\mathfrak{S}_n} V^{\otimes n}.
\end{equation}
\end{definition}

\begin{definition}[\cite{Operads}, 5.7.1] 
A \emph{cooperad} $\C^*$ is the data of a family of $\mathfrak{S}_n$-modules $\C^*(n)$ for every integer $n$ and of two morphisms of $\mathfrak{S}$-modules $\Delta: \C^* \rightarrow \C^* \otimes \C^*$ and $\epsilon: \C^* \rightarrow I$ (counit) satisfying some coassociativity and counitality axioms.
When $\C$ is an operad, $\C^*(n)=\Hom(\C(n), \K)$ is a cooperad, called the \emph{dual cooperad} of $\C$.
\end{definition}

\begin{definition}[\cite{Operads}, 5.7.3] \label{defcog}A \emph{coalgebra} over an operad $\C$ is a vector space $C$ equipped with a $\mathfrak{S}_n$-equivariant morphism $\gamma^n_C: \C(n) \otimes C \rightarrow C^{\otimes n} $. This definition is equivalent to the definition of a coalgebra over the cooperad $\C^*$, which is the data of a map $\Delta^n_C: C \rightarrow \C^*(n) \otimes_{\mathfrak{S}_n} C^{\otimes n}$.
The vector space of \emph{primitive elements} of the coalgebra $C$ is:
\begin{equation}
\operatorname{Prim}(C)=\F_1C:=\left\lbrace x \in C | \delta(x)=0 \text{ for any } \delta \in \C(n),n>1 \right\rbrace.
\end{equation}
We denote the \emph{free (conilpotent) coalgebra} over an operad $\C$ (or equivalently its associated dual cooperad $\C^*$) whose vector space of primitives is $V$ by 
\begin{equation}
\C^c(V)=\bigoplus_{n \geq 1} \C^*(n) \otimes_{\mathfrak{S}_n} V^{\otimes n}.
\end{equation}
\end{definition}

\begin{notation}\label{notation}
We put the emphasis on the notation of the maps which will be used later on: an algebra over the operad $\A$ is a vector space $H$ endowed with a map $m_H : \A(H) \rightarrow H$ and a coalgebra over the cooperad $\C^*$ is a vector space $H$ endowed with a map $\delta_H : H \rightarrow \C^*(H)$. 
\end{notation}

Following \cite{Markl-Fox}, we define the notion of mixed distributive laws: 

\begin{definition}  \label{ml} 
A \emph{compatibility relation}  $\lambda$ is defined as a sequence $\{\lambda(m,n)\}$ of maps
\begin{multline*}
\lambda(m,n): \C(m)\otimes \A(n) \to \\ \oplus \A(t_1)\otimes\cdots\otimes \A(t_m)\otimes_{ \mathfrak{S}_{t_1}\times\cdots\times \mathfrak{S}_{t_m}} \mathbb K[\mathfrak{S}_N]\otimes_{\mathfrak{S}_{s_1}\times \cdots \times \mathfrak{S}_{s_n}} \C(s_1)\otimes \cdots\otimes \mathcal{C}(s_n),
\end{multline*}
where the summation is taken over all the $N\geq 1$ and $s_1+\cdot+s_n=t_1+\cdots+t_m=N$.
A compatibility relation is a \emph{mixed distributive law} if 
\begin{itemize}
\item it is compatible with the action of the symmetric group $\mathfrak{S}_m \times \mathfrak{S}_n$: 
\begin{equation*}
\lambda(m,n)(c^{\widetilde{\sigma}}_m \otimes a_n^\sigma) = \sum   a_{j_{\tilde{\sigma}(1)}} \otimes \cdots \otimes a_{j_{\tilde{\sigma}(m)}} \otimes \mu \otimes c_{i_{\sigma(1)}} \otimes \ldots \otimes c_{i_{\sigma(n)}},
\end{equation*}
where $\sigma \in \mathfrak{S}_n$ and $\widetilde{\sigma} \in \mathfrak{S}_m$ act according to the action defined on the operad as a symmetric operad and $\lambda(m,n)(c_m \otimes a_n)$ is denoted by $\sum  a_{i_1} \otimes \ldots \otimes a_{i_m} \otimes \mu \otimes c_{j_1} \otimes \cdots \otimes c_{j_n}$,
\item it is compatible with the operad structures of $\A$ and $\C$.
\end{itemize}
 The algebra $\mathcal{A}(\mathcal{C}^c(V))$ is then endowed with a structure of $\mathcal{C}^c$-coalgebra and the coalgebra $\mathcal{C}^c(\mathcal{A}(V))$ is then endowed with a structure of $\mathcal{A}$-algebra.
\end{definition}

\begin{example}The classical law for commutative co-commutative bialgebras, called \emph{non-unitary Hopf relation} is given by the mixed distributive law:
\begin{align*}
\lambda(2,2): e_2 \otimes e_2 \mapsto &\left(e_1 \otimes e_1\right) \otimes_{\mathfrak{S}_1 \times \mathfrak{S}_1} \id \otimes_{\mathfrak{S}_1 \times \mathfrak{S}_1} \left(e_1 \otimes e_1\right) \\
&+ \left(e_1 \otimes e_1\right) \otimes_{\mathfrak{S}_1 \times \mathfrak{S}_1} (12) \otimes_{\mathfrak{S}_1 \times \mathfrak{S}_1} \left(e_1 \otimes e_1\right) \\
&+\left(e_1 \otimes e_2\right) \otimes_{\mathfrak{S}_1 \times \mathfrak{S}_2} \id \otimes_{\mathfrak{S}_2 \times \mathfrak{S}_1} \left(e_2 \otimes e_1\right) \\
&+\left(e_2 \otimes e_1\right) \otimes_{\mathfrak{S}_2 \times \mathfrak{S}_1} (23) \otimes_{\mathfrak{S}_2 \times \mathfrak{S}_1} \left(e_2 \otimes e_1\right)\\
&+\left(e_2 \otimes e_1\right) \otimes_{\mathfrak{S}_2 \times \mathfrak{S}_1} \id \otimes_{\mathfrak{S}_1 \times \mathfrak{S}_2} \left(e_1 \otimes e_2\right) \\
&+\left(e_1 \otimes e_2\right) \otimes_{\mathfrak{S}_1 \times \mathfrak{S}_2} (12) \otimes_{\mathfrak{S}_1 \times \mathfrak{S}_2} \left(e_1 \otimes e_2\right) \\
&+\left(e_2 \otimes e_2\right) \otimes_{\mathfrak{S}_2 \times \mathfrak{S}_2} (23) \otimes_{\mathfrak{S}_2 \times \mathfrak{S}_2} \left(e_2 \otimes e_2\right) ,
\end{align*} 
denoting by $(e_n)$ the usual basis of $\operatorname{Comm}(n)=\A(n)=\C(n)$.
As there are no ambiguity on operations, mixed distributive law can be represented through a clearer diagram with 
\resizebox{0.7cm}{!}{$
\overbrace{\begin{tikzpicture}[scale=0.4,thick]
\draw (0,0)--(0,1);
\draw (-1,2)--(0,1)--(1,2);
\draw (0,1.75) node{\ldots};
\end{tikzpicture}}^{n}$} the usual basis of $\A(n)$ and  \resizebox{0.7cm}{!}{$
\underbrace{\begin{tikzpicture}[scale=0.4,thick]
\draw (0,0)--(0,-1);
\draw (-1,-2)--(0,-1)--(1,-2);
\draw (0,-1.8) node{\ldots};
\end{tikzpicture}}_{n}$} the usual basis of $\C(n)$:
\begin{center}
\begin{tikzpicture}[scale=0.3,thick]
\draw (0,0)--(0,1)--(1,1.5)--(1,2.5)--(0,3)--(0,4);
\draw (2,0)--(2,1)--(1,1.5)--(1,2.5)--(2,3)--(2,4);
\draw (3,2) node{$\mapsto$};
% ||
\draw (4,0)--(4,4);
\draw (6,0)--(6,4);
\draw (7,2) node{$+$};
\end{tikzpicture}
\begin{tikzpicture}[scale=0.3,thick]
% X
\draw (4,0)--(4,1.5)--(6,2.5)--(6,4);
\draw (4,4)--(4,2.5)--(6,1.5)--(6,0);
\draw (7,2) node{$+$};
\end{tikzpicture}
\begin{tikzpicture}[scale=0.3,thick]
% | Y
\draw (8,4)--(8,1.5)--(8.75,1)--(8.75,0);
\draw (8.75,1)--(9.5,1.5)--(9.5,2.5)--(10.25,3)--(10.25,4);
\draw (10.25,3)--(11,2.5)--(11,0);
\draw (11.75,2) node{$+$};
\end{tikzpicture}
\begin{tikzpicture}[scale=0.3,thick]
%C droit
\draw (21.5,0)--(21.5,1.25)--(22.5,2)--(21.5,2.75)--(21.5,4);
\draw (21.5,1.25)..controls (20.5,2) and (20,2.5) .. (20,3)--(20,4);
\draw (21.5,2.75)..controls (20.5,2) and (20,1.5).. (20,1)--(20,0);
\draw (23.5,2) node{$+$};
\end{tikzpicture}
\begin{tikzpicture}[scale=0.3,thick]
%Y|
\draw (3,4)--(3,1.5)--(2.25,1)--(2.25,0);
\draw (2.25,1)--(1.5,1.5)--(1.5,2.5)--(0.75,3)--(0.75,4);
\draw (0.75,3)--(0,2.5)--(0,0);
\draw (4,2) node{$+$};
\end{tikzpicture}
\begin{tikzpicture}[scale=0.3,thick]
% C gauche
\draw (17,0)--(17,1.25)--(16,2)--(17,2.75)--(17,4);
\draw (17,1.25)..controls (18,2) and (18.5,2.5) .. (18.5,3)--(18.5,4);
\draw (17,2.75)..controls (18,2) and (18.5,1.5).. (18.5,1)--(18.5,0);
\draw (19.5,2) node{$+$};
\end{tikzpicture}
\begin{tikzpicture}[scale=0.3,thick]
% <x>
\draw (1,0)--(1,1.25)--(0,2)--(1,2.75)--(1,4);
\draw (1,2.75)--(3,1.25)--(3,0);
\draw (1,1.25)--(3,2.75)--(3,4);
\draw (3,1.25)--(4,2)--(3,2.75);
\end{tikzpicture}
\end{center}

As a mixed distributive law is compatible with the operad structure of $\A$ and $\C$ is is enough to give it on generators of these operads. The other laws are then deduced from it. For instance, $\lambda(2,3)$ maps $e_3 \otimes e_2$ to a sum of $25$ terms ($6$ with  $\{t_1, t_2\} = \{2,1\}$ and $\{s_1, s_2, s_3\} = \{1,1,1\}$, $6$ with  $\{t_1, t_2\} = \{3,1\}$ and $\{s_1, s_2, s_3\} = \{2,1,1\}$, $6$ with  $\{t_1, t_2\} = \{2,2\}$ and $\{s_1, s_2, s_3\} = \{2,1,1\}$, $6$ with  $\{t_1, t_2\} = \{3,2\}$ and $\{s_1, s_2, s_3\} = \{2,2,1\}$ and $1$ with  $\{t_1, t_2\} = \{3,3\}$ and $\{s_1, s_2, s_3\} = \{2,2,2\}$).
\end{example}

We call $\C^c-_{\lambda}\A$-bialgebra any bialgebra which is a $\A$ -algebra, a $\C^c$-coalgebra and such that products and coproducts satisfy the mixed distributive laws $\lambda$. We need moreover the following notion:

\begin{definition}
The cofiltration $\F_n\Hc$ can be defined on any $\C^c$-coalgebra $\Hc$: 
\begin{equation*}
\F_n\Hc = \{x \in \Hc | \forall p >n, \forall \delta \in \C(p), \delta(x)=0\}.
\end{equation*}. The vector space $\F_1\Hc$ is the vector space of \emph{primitive elements}. Moreover, we denote by $\iota_\Hc : \mathcal{F}_1\Hc \rightarrow \Hc$ the canonical inclusion.

A $\C^c$-coalgebra $\Hc$ is said to be \emph{conilpotent} if $\Hc = \cup_{n \geq 1} \F_n\Hc$.
\end{definition}

\begin{remark}
When there will be no ambiguity, we will only write $\F_n$.
\end{remark}

The rigidity theorem as stated by Loday is:

\begin{theorem}[\cite{GBO}]
Let $\C^c-_{\lambda}\A$ be a bialgebra type which satisfies
\begin{description}
\item[(H0)] the compatibility relations $\lambda$ are distributive,
\item[(H1)] the free $\mathcal{A}$-algebra is naturally a $\C^c-_{\lambda}\A$-bialgebra,
\item[(H2iso)] the $\C^c$-coalgebra map $\varphi(V):\A(V) \rightarrow \C^c(V)$ is an isomorphism.
\end{description}
Then any conilpotent $\C^c-_{\lambda}\A$-bialgebra $\mathcal{H}$ is free and cofree over its primitive elements
\begin{equation*}
\A(\operatorname{Prim} \mathcal{H})\cong \mathcal{H} \cong \C^c(\operatorname{Prim} \mathcal{H}).
\end{equation*}
\end{theorem}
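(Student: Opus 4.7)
The plan is to produce an idempotent projector $e:\HH\to\HH$ whose image is exactly $\Prim\HH$, and then use the universal properties of the free $\A$-algebra and the cofree $\C^c$-coalgebra to extend the inclusion $\iota:\Prim\HH\hookrightarrow\HH$ and the projection $e:\HH\twoheadrightarrow\Prim\HH$ into bialgebra morphisms
$$\tilde\iota:\A(\Prim\HH)\longrightarrow\HH,\qquad \tilde e:\HH\longrightarrow\C^c(\Prim\HH),$$
which I will then show to be mutually inverse isomorphisms.

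For the construction of $e$, I use (H1) and (H2iso) to set up a canonical template on the free object: the free algebra $\A(V)$ carries a $\C^c-_\lambda\A$-structure, and $\varphi_V:\A(V)\to\C^c(V)$ is an isomorphism of $\C^c$-coalgebras. Since $V$ is the primitive part of $\C^c(V)$, transporting through $\varphi_V^{-1}$ yields a canonical projection $p_V:\A(V)\twoheadrightarrow V$. For an arbitrary $\HH$, I mimic this arity by arity: for each $n\geq 2$ I combine the cooperation $\delta_\HH^{(n)}:\HH\to\C^*(n)\otimes_{\mathfrak{S}_n}\HH^{\otimes n}$, the inverse species map $\varphi_n^{-1}:\C^*(n)\to\A(n)$ and the action $m_\HH^{(n)}:\A(n)\otimes_{\mathfrak{S}_n}\HH^{\otimes n}\to\HH$ into an endomorphism $e_n$ of $\HH$; an Euler-type alternating/convolution sum over $n$ then assembles into $e:\HH\to\HH$. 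Crucially, (H0) guarantees that this construction intertwines the two structures correctly and, together with conilpotence, that the sum is well defined on each $\F_n\HH$.

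For the verification that $e^2=e$ and $\Im e=\Prim\HH$, I proceed by induction on the cofiltration $\F_n\HH$. On $\F_1\HH=\Prim\HH$ the higher coproducts vanish and $e$ reduces to the identity, so $e\iota=\operatorname{id}$. For the inductive step I use (H0) to rewrite an element of $\F_n\HH$ as a sum of operadic products of lower-filtration elements plus a primitive correction, to which the induction hypothesis applies; checking that $\delta_\HH^{(k)}\circ e=0$ for every $k\geq 2$ reduces, after pushing $e$ through the distributive law, to the analogous vanishing on the free bialgebra, where it is immediate from $\varphi$.

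For the rigidity conclusion, the universal property of $\A(\Prim\HH)$ extends $\iota$ to a (bialgebra) morphism $\tilde\iota$, which is automatically a $\C^c$-coalgebra morphism by (H1) and (H0); dually the universal property of $\C^c(\Prim\HH)$ extends $e$ to $\tilde e$. By uniqueness in the universal properties, the composition $\tilde e\circ\tilde\iota:\A(\Prim\HH)\to\C^c(\Prim\HH)$ must agree with the canonical $\varphi_{\Prim\HH}$, which is an isomorphism by (H2iso). A parallel filtration argument using conilpotence then shows that $\tilde\iota$ is surjective (hence bijective) and $\tilde e$ is injective, so both are isomorphisms. I expect the hard part to be pinning down the exact form of $e$ and proving that its image lies in $\Prim\HH$: this is where the interplay between the distributive law (H0) and the species-level isomorphism $\varphi$ must be exploited most delicately, and where the bookkeeping of the inductive filtration step is most prone to error.
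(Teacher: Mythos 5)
Your overall architecture coincides with the paper's (which itself follows Loday and Patras): build an idempotent $e$ projecting onto $\Prim\mathcal{H}$, lift $\iota$ and $e$ to $\tilde\iota:\A(\Prim\mathcal{H})\to\mathcal{H}$ and $\tilde e:\mathcal{H}\to\C^c(\Prim\mathcal{H})$ by the universal properties, identify $\tilde e\circ\tilde\iota$ with $\varphi_{\Prim\mathcal{H}}$ by cofreeness, and finish with a conilpotence/minimality argument for the injectivity of $\tilde e$. The one genuine divergence is how $e$ is obtained. You propose an explicit Euler-type alternating convolution sum built from $m^{(n)}_\mathcal{H}\circ(\varphi_n^{-1}\otimes\mathrm{id})\circ\delta^{(n)}_\mathcal{H}$; the paper deliberately avoids any closed formula in the general case. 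Instead it first proves that $\mathcal{H}$ is generated by its primitives (its Lemma on generation, an induction on the cofiltration exploiting the homogeneous confluence law), deduces the direct-sum decomposition $\F_n\mathcal{H}=\F_{n-1}\mathcal{H}\oplus M_n$ with $M_n=\{\mu(p_1,\dots,p_n)\}$, and then simply \emph{defines} $e$ as the projection onto $\F_1$ parallel to the $M_j$. This structural route buys robustness: the explicit coefficients of an Eulerian-type sum are operad-dependent and sensitive to automorphism groups of basis elements (compare the paper's formulas $\sum_n\frac{(-1)^{n-1}}{n!}n^{n-1}D_n$ for Perm versus $\sum_n\frac{(-1)^{n-1}n}{n!}D_n$ for NAP, and the inductive formula with $1/|\operatorname{Aut}(a_i^n)|$ weights, which is only made explicit under an additional orthogonal-basis hypothesis). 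Your sketch correctly anticipates that pinning down $e$ is the delicate point, and your inductive step (rewriting an element of $\F_n\mathcal{H}$ as products of primitives plus a lower-filtration correction) is exactly the content of the paper's generation lemma; but as written, the existence of a single universal alternating-sum formula is asserted rather than established, and that is precisely the gap the paper's decomposition lemma is designed to close. If you replace your explicit sum by the projection coming from $\F_n=\F_{n-1}\oplus M_n$, the rest of your argument goes through unchanged.
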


\begin{remark}
Thus the space of irreducible elements (i.e. elements which cannot be written as a linear combination of some products of elements) and the space of primitive elements are the same.
\end{remark}

\subsection{Confluence laws.}

The reasoning in Loday's theorem is to first consider a mixed distributive law thanks to which one can compute a map sending $\A(V)$ to $\C^c(V)$. To use this theorem, one then has to prove that the induced morphism is an isomorphism.

We adopt here another reasoning: we start from the data of such an isomorphism and prove that we get back the hypotheses of the theorem, more precisely, we prove the existence of an associated confluence law. Our reasoning is splitted on the next five subsections: in this subsection, we introduce the notion of confluence laws. We then explain why the hypothesis on $\varphi$ cannot be reduced in the subsection \ref{inj}. From an isomorphism $\varphi$ we get a homogeneous confluence law in the subsection \ref{hom}, before reaching confluence laws in the subsection \ref{Equiv} and stating the equivalence between conditions of the rigidity theorem. We finally state our main result in the subsection \ref{rig}.

We introduce in this subsection a generalisation of mixed distributive laws, that we call \emph{confluence laws}:

\begin{definition} \label{gml1}
A \emph{confluence law} $\alpha$ between operads $\A$ and $\C$ is a family of maps 
\begin{equation}
\alpha_{m,n} :\mathcal C(m)\otimes \mathcal A(n) \to  \mathcal A^{\otimes m}(n),
\end{equation}
such that $\mathcal A^{\otimes m}(n)$ is a short cut for $\bigoplus_{n_1+\ldots+n_m=n} \A(n_1) \otimes \ldots \otimes \A(n_m)$ and $\alpha_{m,n}$ is compatible with the structure of operad of $\C$ and with the action of the symmetric group $\mathfrak{S}_m \times \mathfrak{S}_n$, with $\mathfrak{S}_m$ acting on $\C(m)$ and $\mathfrak{S}_n$ on $\A(n)$.
\end{definition}

It is clear that mixed distributive laws as defined in Definition \ref{ml} are confluence laws, by keeping only terms involving the trivial operation of $\C(1)$ and no other operations of $\C$. On the other hand, a confluence law can be obtained by considering in mixed distributive laws coefficients which could depend on the cofiltration, which is not allowed in usual mixed distributive laws.

\begin{example}
An example of a new confluence law is presented in Equation \eqref{RelCompPL}. 
\end{example}

\begin{remark}
Confluence laws are equivalent to the data of a $\mathfrak{S}$-module morphism $\alpha : \A \rightarrow \C^*\A$ as presented in \cite{Muriel}. The equivalence comes from the equality:
\begin{equation*}
\Hom_{\mathfrak{S}_n}\left( \A(n), \C^*(m) \otimes_{\mathfrak{S}_m} \A^{\otimes m}(n)\right)=\Hom_{\mathfrak{S}_m \times \mathfrak{S}_n}\left( \C(m) \otimes\A(n), \A^{\otimes m}(n)\right),
\end{equation*}
coming from the definition of duality.

We thank again M. Livernet for helping us clearing this link and clarifying the definition of $\C^c-_{\alpha}\A$-bialgebras.
\end{remark}

Let us now define $\C^c-_{\alpha}\A$-bialgebras.

\begin{definition}\label{defBialg}
Refering to notations of Notation \ref{notation}, a $\C^c-_{\alpha}\A$-bialgebra $H$ is a $\mathbb{K}$-vector space $H$ endowed with a structure of $\A$ -algebra, a structure of $\C^c$-coalgebra, such that the following diagram commutes:

\begin{tikzpicture}
  \matrix (m) [matrix of math nodes,row sep=3em,column sep=4em,minimum width=2em] {
     \A(\mathcal{F}_1 H) & \A(H) & H & \C^*(H)  \\
 \A(H) & & & \C^*\A(H)\\};
  \path[-stealth]
    (m-2-1) edge node [below] {$\alpha$} (m-2-4)
        (m-1-1)    edge node [above] {$\mathcal{A}(\iota_H)$} (m-1-2)
    (m-1-1) edge node [left] {$\mathcal{A}(\iota_H)$} (m-2-1)
    (m-1-2) edge node [above] {$m_H$} (m-1-3)
    (m-1-3) edge node [above] {$\delta_H$} (m-1-4)
    (m-2-4) edge node [right] {$\C^*(m_H)$} (m-1-4);
\end{tikzpicture}
\end{definition}

Now we have defined the objets on which we will be working, let us look at them more closely.

\subsection{Injectivity on $\varphi$.}\label{inj}

In \cite{GBO}, a condition on $\varphi$ for the existence of a rigidity theorem is that the morphism $\varphi_V$ is an isomorphism for every vector space $V$. We first give an example of the necessity of the injectivity of $\varphi$ before studying the link between filtration and cofiltration in the associated bialgebras. This link is crucial because, as we will see later, the proof of the main theorem relies on the projection on primitive elements parallel to decomposable elements.

The following example will show that the lack of injectivity of $\varphi$ induces a counter-example in the rigidity theorem.

\begin{example}\label{expleST}
Consider the coassociative associative bialgebras where the mixed distributive law $\lambda$ is the Hopf mixed distributive law.  We compute $\varphi: \mathcal{A}s(V)\to\mathcal{A}s(V)^c$ from the mixed distributive law: it is given by $\varphi(1\ldots n \otimes (v_1, \ldots v_n)) = \sum_{\sigma \in \mathfrak{S}_n} \sigma(1)\ldots\sigma(n) \otimes (v_1, \ldots v_n)$. It is to be noted that the $\varphi$ constructed is not a monomorphism (therefore not an isomorphism and nor a epimorphism), failing to verify all the hypotheses for the existence of a rigidity theorem.

Let us consider Solomon-Tits algebra, see for example \cite{Novelli-Thibon, FNT, Burgunder-Ronco}. Its underlying vector space is the space of surjections ${\mbox {\bf ST}} = \cup_{n,r \geq 1} {\mbox {\bf ST}_n^r}$, where ${\mbox {\bf ST}_n^r}=\{ x : \{1, \ldots, n\} \rightarrow \{1, \ldots, r\}| x \text{ surjective}\}$) (also known as \emph{set compositions}). One can endow this space with a shuffle product and the block coproduct defined as follows. For $x\in {\mbox {\bf ST}_n^r}$, we write $x = (x(1),\dots ,x(n))$ and $r=\max\{x(i),1\leq i\leq n\}$. Let $x\in {\mbox {\bf ST}_n^r}$, $y\in {\mbox {\bf ST}_m^s}$. The (vertical) shuffle product $\star$ is defined by:
\begin{equation}
 x\star y= \sum f\circ (x(1),\cdots x(n),y(1)+r,\cdots y(m)+r), 
\end{equation}
where the sum is over all the stuffles $f\in SH(r,s)$, i.e. any bijective map $f : \{1, \ldots, r+s\} \rightarrow \{1, \ldots, r+s\}$ satisfying $f(1)<\ldots <f(r)$, $f(r+1)<\ldots <f(r+s)$. 

Given a map $x:[n]\longrightarrow {\mathbb N}$ there exists a unique surjective map ${\mbox {std}(x)}$ in $ {\mbox {\bf ST}}$ such that $x(i) <x (j)$ if, and only if, ${\mbox {std}(x)}(i) < {\mbox {std}(x)}(j)$, for $1\leq i,j\leq n$. The map ${\mbox {std}(x)}$ is called the {\it standardisation} of $x$ (see for instance \cite{Novelli-Thibon}).  Let $K=\{ j_1<\dots <j_l\}\subseteq \{1,\dots ,r\}$, we define the co-restriction of $x$ to $K$ by $x\vert ^K:={\mbox {std}(x(s_1),\dots ,x(s_q))}$, for $x^{-1}(K)=\{ s_1<\dots <s_q\}$. We define the block coproduct $\Delta_{\textrm{block}}(x)$  by:
\vspace{-0.2cm}
\begin{equation*}
\Delta_{\textrm{block}} (x) = \sum _{i=1}^{r-1} x\vert ^{\{1,\dots , i\}}\otimes x\vert ^{\{ i+1, \dots , r\}},
\end{equation*}
and we extend it by linearity to all $\mathbb K[{\mbox {\bf ST}}]$. 

\begin{examples} Some examples of products and coproducts are given by:
\begin{equation*}
1 \star 1 = 12+21, \ 11 \star 1 = 112+221, \ 12 \star 1 = 123+132+231
\end{equation*}
\begin{equation*}
213 \star 11 = 21344+32411+31422+21433,
\end{equation*}
\begin{equation*}
\Delta_{\textrm{block}}(2433142) = \epsilon \otimes 2433142 + 1 \otimes 132231 + 212 \otimes 2112 + 23312 \otimes 11 + 2433142 \otimes \epsilon
\end{equation*}
where $\epsilon$ is the empty word (the unique surjection from a set of cardinality $0$ to itself).
\end{examples}

The mixed distributive law between the product and the coproduct is the Hopf mixed distributive law, which can be checked by direct inspection. 

 ${\mbox {\bf ST}}$ is conilpotent as for any element in $x\in{\mbox {\bf ST}_n^r}$ $\Delta_{\textrm{block}}^{r+1}(x)=0$, where $\Delta_{\textrm{block}}^2=\Delta_{\textrm{block}}$ and $\Delta_{\textrm{block}}^k=(\Delta_{\textrm{block}}\otimes {\mbox {id}}^{\otimes k-1})\circ \Delta_{\textrm{block}}^{k-1}$.
 
 By a direct computation, one can prove that the element $112$ cannot be constructed as a linear combination of products of primitive elements. And ${\mbox {\bf ST}}$ is therefore not generated by its primitives though it does verify the conilpotent and the existence of the mixed distributive law hypotheses. 
\end{example}

\subsection{Homogeneous confluence laws.} \label{hom}

We first decribe the link between filtration and cofiltration in the free bialgebra. We then use this link to define homogeneous confluence laws, which are the step between the isomorphism $\varphi$ and the associated confluence law $\alpha$.

\begin{proposition}
The data of a family of isomorphisms $\varphi_V: \A(V) \rightarrow \C^c(V)$ functorial in $V$ and sending natural graduation by product to natural cograduation by coproduct, is equivalent to the data of a family of isomorphisms of $\mathfrak{S}$-modules $\varphi_n: \A(n) \rightarrow \C^*(n)$. 
\end{proposition}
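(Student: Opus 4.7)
The plan is to exploit the Schur functor description of free algebras and cofree coalgebras: $\A(V)=\bigoplus_{n\geq 1}\A(n)\otimes_{\mathfrak{S}_n}V^{\otimes n}$ and $\C^c(V)=\bigoplus_{n\geq 1}\C^*(n)\otimes_{\mathfrak{S}_n}V^{\otimes n}$, where the natural (co)graduation is exactly the direct sum grading by $n$. Under the hypothesis that $\varphi_V$ respects this grading, it decomposes as $\varphi_V=\bigoplus_{n\geq 1}\varphi_V^{(n)}$ with each $\varphi_V^{(n)}:\A(n)\otimes_{\mathfrak{S}_n}V^{\otimes n}\to\C^*(n)\otimes_{\mathfrak{S}_n}V^{\otimes n}$ functorial in $V$. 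So the claim reduces, component by component, to the classical correspondence between natural transformations of Schur functors and morphisms of $\mathfrak{S}_n$-modules.

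For the easy direction, starting from a family of $\mathfrak{S}_n$-equivariant isomorphisms $\varphi_n:\A(n)\to\C^*(n)$, I would set $\varphi_V:=\bigoplus_n \varphi_n\otimes_{\mathfrak{S}_n}\mathrm{id}_{V^{\otimes n}}$. This is well defined on the coinvariants precisely because $\varphi_n$ commutes with the $\mathfrak{S}_n$-action, it is functorial in $V$ because $V\mapsto V^{\otimes n}$ is, it preserves the natural graduation by construction, and it is an isomorphism because each $\varphi_n$ is. For the converse, given a functorial graduation-preserving $\varphi_V$, I would recover $\varphi_n$ by specialising to $V=\K^n$ with basis $e_1,\dots,e_n$ and restricting $\varphi_V^{(n)}$ to the multilinear piece, i.e.\ to the subspace of $\A(n)\otimes_{\mathfrak{S}_n}(\K^n)^{\otimes n}$ spanned by elements in which each $e_i$ occurs exactly once. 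A short argument using naturality under the diagonal scalar endomorphisms $e_i\mapsto \lambda_i e_i$ of $\K^n$ forces $\varphi_V^{(n)}$ to preserve this multidegree decomposition, so the restriction lands in the corresponding multilinear piece of $\C^*(n)\otimes_{\mathfrak{S}_n}(\K^n)^{\otimes n}$. Identifying both multilinear pieces canonically with $\A(n)$ and $\C^*(n)$ respectively (via $a\otimes(e_1\otimes\cdots\otimes e_n)\leftrightarrow a$), I read off a linear map $\varphi_n:\A(n)\to\C^*(n)$; its $\mathfrak{S}_n$-equivariance follows from naturality of $\varphi_V$ with respect to the permutation automorphisms of $\K^n$ that permute the $e_i$, and its bijectivity follows from bijectivity of $\varphi_V^{(n)}$ restricted to the multilinear piece.

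To finish, I would verify that the two assignments are mutually inverse: starting from $(\varphi_n)$, building $\varphi_V$, and then extracting the multilinear component gives back $\varphi_n$; conversely, starting from $\varphi_V$, extracting $\varphi_n$ and re-assembling produces a natural transformation which agrees with $\varphi_V$ on the multilinear piece for every $V=\K^n$, hence on all of $\A(V)$ by naturality (any element of $\A(V)$ is in the image of a linear map $\K^n\to V$ for some $n$). The main (very mild) obstacle is the extraction step: one must check that naturality under all linear endomorphisms of $V$, not merely isomorphisms, already forces the multidegree splitting; this is the standard polynomial-functor argument using distinct scalar multiplications on basis vectors, and it is the only place where one genuinely needs functoriality in $V$ rather than just $\mathfrak{S}$-equivariance.
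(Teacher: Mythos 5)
Your proposal is correct and follows essentially the same route as the paper: both rest on the Schur-functor description $\A(V)=\bigoplus_n \A(n)\otimes_{\mathfrak{S}_n}V^{\otimes n}$, $\C^c(V)=\bigoplus_n \C^*(n)\otimes_{\mathfrak{S}_n}V^{\otimes n}$ and the standard equivalence between graduation-preserving natural transformations and families of $\mathfrak{S}_n$-module maps. The only difference is that you spell out the extraction of $\varphi_n$ from $\varphi_V$ (multilinear piece of $V=\K^n$, naturality under diagonal and permutation endomorphisms), which the paper leaves implicit.
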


\begin{proof}
 As $\K$ is a field of characteristic $0$, we can identify coinvariants and invariants and the notion of free (conilpotent) coalgebra over the operad $\C$ is given by the Schur functor defined by $\C^c(V)=\bigoplus \C^*(n) \otimes_{\mathfrak{S}_n} V^{\otimes n} $. 

The data of a family of isomorphisms of $\mathfrak{S}$-modules $\varphi_n: \A(n) \rightarrow \C^*(n)$ defines a natural transformation between the Schur functor associated to the operad $\mathcal{A}$, $\A: V \mapsto \bigotimes_{n \geq 1} \mathcal{A}(n) \otimes V^{\otimes n}$ and the Schur functor associated to the cooperad $\mathcal{C}^*$, $\C^c$  described above. This natural transformation is directly equivalent to a family of isomorphisms $\varphi_V: \A(V) \rightarrow \C^c(V)$ functorial in $V$ sending graduation $\mathcal{A}(n) \otimes V^{\otimes n}$ to cograduation $\C^*(n) \otimes_{\mathfrak{S}_n} V^{\otimes n} $. 

Let us now suppose the existence of a family of isomorphisms $\varphi_V: \A(V) \rightarrow \C^c(V)$ functorial in $V$, sending graduation by product to cograduation by coproduct. Then, $\varphi_V(\A(n) \otimes_{\mathfrak{S}_n} V^{\otimes n}) = \C^ *(n) \otimes_{\mathfrak{S}_n} V^{\otimes n}$, for any integer $n$. Hence $\varphi_V$ defines a family of isomorphisms of $\mathfrak{S}$-modules $\varphi_n: \A(n) \rightarrow \C^*(n)$. 
\end{proof}

We will denote by $\varphi: \A \rightarrow \C^*$ the morphism of $\mathfrak{S}$-modules induced by the family of $\varphi_n$.

\begin{remark}
For the trivial representation, the only isomorphisms $\varphi : \operatorname{Comm}(V) \rightarrow\operatorname{Comm}^c(V)$ are homotheties, but it is not always the case (it depends on the chosen decomposition in irreducible representations of the $\mathfrak{S}$-module, which is unique only up to isomorphisms).

\end{remark}

We call \emph{homogeneous confluence law} the set of all rewritings of compositions of a cooperation of arity $n$, $\delta\in \C(n)$, with an operation of arity $n$, $\mu \in \A(n)$, applied to primitive elements in terms of the primitive elements $p_i$. We denote this set by $(\alpha_h)$. Then we have: 
\begin{equation}
\begin{split}
(\alpha_h)=\{\delta \circ \mu (p_1, \ldots, p_n) =& \sum_{\sigma \in \mathfrak{S}_n} b^ \sigma_{\delta, \mu} p_{\sigma(1)} \otimes \ldots \otimes p_{\sigma(n)} \\ &| \delta \in \C(n), \mu \in \A(n), b^ \sigma_{\delta, \mu} \in \mathbb{K},  n \geq 1, p_i \in \mathcal{F}_1\mathcal{H}\}.
\end{split}
\end{equation}

In other words, homogeneous confluence laws are to confluence laws what filtered distributive laws are to distributive laws (see \cite{FiltDistLaw, PoissBrack}).

A homogeneous confluence law is then strictly equivalent to endowing the free $\A$-algebra over a vector space $V$ with a structure of $\C$-coalgebra sending the graduation of the algebra to the cograduation of the coalgebra. Indeed, the following result follows:
\begin{lemma}\label{phiLn}
The data of $\varphi$ is equivalent to the data of a homogeneous confluence law.
\end{lemma}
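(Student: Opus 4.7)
The plan is to translate both data into isomorphism data between $\mathfrak{S}_n$-modules and then read off scalars on either side. By the preceding proposition, the data of $\varphi$ is equivalent to a family of $\mathfrak{S}_n$-module isomorphisms $\varphi_n \colon \A(n) \to \C^*(n)$, so I will work with this reformulation throughout.

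First, I would prove the direction from $\varphi$ to a homogeneous confluence law. Given $\varphi$, I transport the canonical $\C^c$-coalgebra structure of $\C^c(V) = \bigoplus_n \C^*(n) \otimes_{\mathfrak{S}_n} V^{\otimes n}$ to $\A(V) = \bigoplus_n \A(n) \otimes_{\mathfrak{S}_n} V^{\otimes n}$ along the iso $\varphi_V = \bigoplus_n (\varphi_n \otimes \operatorname{id})$. Since $\varphi_V$ is grading-preserving, so is the induced coalgebra structure. To extract the scalars, take $V$ the vector space spanned by $p_1, \dots, p_n$ and evaluate the induced coaction $\gamma^n(\delta \otimes -)$ on $\mu(p_1, \dots, p_n) \in \A(n) \otimes_{\mathfrak{S}_n} V^{\otimes n}$: under $\varphi_V$ this element is sent to $\varphi_n(\mu) \otimes (p_1 \otimes \cdots \otimes p_n)$, and the canonical coaction of $\delta \in \C(n)$ on $\C^*(n) \otimes_{\mathfrak{S}_n} V^{\otimes n}$ uses the pairing $\C(n) \otimes \C^*(n) \to \mathbb{K}$ to produce an element of $V^{\otimes n}$, which expands uniquely as $\sum_{\sigma \in \mathfrak{S}_n} b^{\sigma}_{\delta,\mu}\, p_{\sigma(1)} \otimes \cdots \otimes p_{\sigma(n)}$. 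Equivariance of the scalars under $\mathfrak{S}_m \times \mathfrak{S}_n$ is immediate from the $\mathfrak{S}$-equivariance of $\varphi_n$ and the equivariance of the coaction on the cofree coalgebra.

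Conversely, starting from a homogeneous confluence law I would recover $\varphi$ as follows. The scalars $b^{\sigma}_{\delta,\mu}$ define, by $\mathfrak{S}_n$-equivariance, a $\mathfrak{S}_n$-equivariant bilinear pairing $\A(n) \otimes \C(n) \to \mathbb{K}$ given by $(\mu, \delta) \mapsto b^{\operatorname{id}}_{\delta, \mu}$, equivalent by currying to a $\mathfrak{S}_n$-equivariant morphism $\varphi_n \colon \A(n) \to \C^*(n)$, and this step is formally inverse to the previous one. The homogeneous confluence law equips $\A(V)$ with a grading-preserving $\C^c$-coalgebra structure whose primitives are exactly $V$; since the grading controls the cofiltration, this coalgebra is conilpotent. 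The universal property of the cofree conilpotent $\C^c$-coalgebra then produces a unique $\C^c$-coalgebra morphism $\varphi_V \colon \A(V) \to \C^c(V)$ extending the identity on $V$, and this morphism decomposes graded piece by graded piece as $\varphi_n \otimes \operatorname{id}$ on $\A(n) \otimes_{\mathfrak{S}_n} V^{\otimes n}$.

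The only delicate point in the backward direction is that $\varphi_n$ must be bijective, not merely an $\mathfrak{S}$-equivariant morphism. This is where I expect to spend the main effort: the hypothesis that the coalgebra structure on $\A(V)$ \emph{realises} the natural grading as the natural cograduation of a $\C^c$-coalgebra must be used to identify $\A(n) \otimes_{\mathfrak{S}_n} V^{\otimes n}$ with the $n$-th cograded piece $\C^*(n) \otimes_{\mathfrak{S}_n} V^{\otimes n}$ bijectively. By naturality in $V$, and by choosing $V$ of sufficiently large dimension so that $V^{\otimes n}$ separates the $\mathfrak{S}_n$-modules involved, this bijectivity descends to the statement that $\varphi_n$ is an isomorphism, completing the equivalence.
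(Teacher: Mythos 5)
Your two constructions are the same as the paper's: in the forward direction the scalars are read off from the duality pairing, $b^{\sigma}_{\delta,\mu}=\left\langle \delta^{\sigma},\varphi_n(\mu)\right\rangle$, and in the backward direction one curries $(\mu,\delta)\mapsto b^{\operatorname{id}}_{\delta,\mu}$ into a $\mathfrak{S}_n$-equivariant map $\A(n)\to\C^*(n)$; the paper does exactly this (its $\psi_n(\mu)(\delta)=b^{id}_{\delta,\mu}$) and then observes the two operations are mutually inverse. Your extra packaging via transport of structure and the universal property of the cofree conilpotent coalgebra is harmless and arguably makes the equivariance claims cleaner.

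The genuine problem is your last paragraph. You announce that the ``delicate point'' is to show that the $\varphi_n$ recovered from a homogeneous confluence law are bijective, and you propose to deduce this from the law ``realising'' the grading as the cograduation. But a homogeneous confluence law, as defined, is nothing more than an equivariant family of scalars $b^{\sigma}_{\delta,\mu}$ with no non-degeneracy condition: taking all $b^{\sigma}_{\delta,\mu}=0$ is a perfectly valid homogeneous confluence law, and it curries to $\varphi_n=0$. So no argument, however large you take $V$, can extract bijectivity from this data; the step would fail. The resolution is that the lemma's equivalence is an equivalence of raw data --- $\mathfrak{S}$-module morphisms $\A\to\C^*$ on one side, equivariant scalar families on the other --- and bijectivity of $\varphi_n$ corresponds to a non-degeneracy condition on the pairing that must be \emph{imposed}, not derived. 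This is consistent with how the paper uses the lemma afterwards: in Proposition \ref{mdl} the bijectivity of the $\varphi_n$ is an explicit additional hypothesis needed to build the confluence law, precisely because it does not come for free from the homogeneous one. Drop the final paragraph (or replace it by the remark that bijectivity is preserved, not created, by the correspondence) and your proof matches the paper's.
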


\begin{proof}
\begin{itemize}
\item[$\Leftarrow$] Given a homogeneous confluence law, we can consider the map from $\A(V)$ to itself where at the source $\A(V)$ is viewed as an algebra and at the target $\A(V)$ is seen as a coalgebra. It produces  a family of morphism $\psi_V$, sending filtration by product to cofiltration by coproduct, and as result a morphism $\psi_n$ defined by 
\begin{equation*}
\psi_n (\mu) (\delta)= b^{id}_{\delta, \mu}. 
\end{equation*} 
\item[$\Rightarrow$]Denoting by $\left\langle , \right\rangle$ the duality pairing between $\C(n)$ and its dual space $\C^*(n)$, and $x^{\sigma}$ the action of an element $\sigma$ of the symmetric group on an element $x$ of $\C(n)=\A(n)$, we define the homogeneous confluence law $(\alpha_p)$ from the morphism $\varphi$ by:
\begin{equation} \label{crochet}
\delta \circ \mu (p_1, \ldots, p_n) = \sum_{\sigma \in \mathfrak{S}_n}\left\langle  \delta^{\sigma}, \varphi_n(\mu)\right\rangle p_{\sigma(1)} \otimes \ldots \otimes p_{\sigma(n)},
\end{equation} 
where $\delta \in \C(n)$, $\mu \in A(n)$ and $p_i \in V$.

Notice that we have: 
\begin{equation*}
\left\langle   \delta^{\sigma}, \varphi_n(\mu)\right\rangle = \left\langle  \delta, \varphi_n(   \mu^{\sigma^{-1}})\right\rangle.
\end{equation*}
\end{itemize}

The two constructions are inverse of each other.

\end{proof}

\subsection{Equivalence between conditions.} 
\label{Equiv}

Now, we will use homogeneous confluence laws and the previous subsection to prove the equivalence between hypothesises of the rigidity theorem, as stated by Loday.

\begin{proposition}\label{mdl}
Let $\mathcal C$ and $\mathcal A$ be two algebraic operads.
The existence of a confluence law $\alpha$  implies the existence of a family of $\mathfrak{S}_n$-module morphism $\varphi_n:\A(n)\to \C^*(n)$, for any positive integer $n$. Moreover, the bijectivity of all $\varphi_n$ enables us to define the associated confluence law.
\end{proposition}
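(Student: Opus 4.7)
The plan is to treat the two claims (confluence law $\Rightarrow \varphi_n$, and bijective $\varphi_n \Rightarrow$ confluence law) separately, leaning on Lemma~\ref{phiLn} which identifies $\varphi$ with a homogeneous confluence law.

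For the forward direction I would first observe that, for $m=n$, the codomain $\mathcal{A}^{\otimes n}(n)$ collapses dramatically. Since the operads are connected with $\mathcal{A}(0)=0$, the Schur-style decomposition of $\mathcal{A}^{\otimes n}(n)$ is indexed by compositions $s_1+\cdots+s_n=n$ with every $s_i\geq 1$, forcing $s_i=1$ for all $i$ and hence $\mathcal{A}^{\otimes n}(n)\cong\mathbb{K}[\mathfrak{S}_n]$. The restriction $\alpha_{n,n}\colon\mathcal{C}(n)\otimes\mathcal{A}(n)\to\mathbb{K}[\mathfrak{S}_n]$ has then exactly the shape of the homogeneous confluence law data $(\alpha_h)$, and Lemma~\ref{phiLn} promotes it to a family of $\mathfrak{S}_n$-equivariant maps $\varphi_n\colon\mathcal{A}(n)\to\mathcal{C}^*(n)$. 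The required equivariance is inherited from the $\mathfrak{S}_m\times\mathfrak{S}_n$-compatibility built into Definition~\ref{gml1}.

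For the reverse direction I would use the bijectivity of each $\varphi_V$ to transport the canonical $\mathcal{C}^c$-coalgebra structure on $\mathcal{C}^c(V)$ to the free algebra $\mathcal{A}(V)$, obtaining for each $\delta\in\mathcal{C}(m)$ a family of natural maps $\mathcal{A}(V)\to\mathcal{A}(V)^{\otimes m}$. Splitting source and target along arity via the Schur decomposition $\mathcal{A}(V)=\bigoplus_{n}\mathcal{A}(n)\otimes_{\mathfrak{S}_n}V^{\otimes n}$ and reading off the multilinear component on $V=\mathbb{K}^n$ produces the desired maps $\alpha_{m,n}(\delta\otimes -)\colon \mathcal{A}(n)\to\mathcal{A}^{\otimes m}(n)$. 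The $\mathfrak{S}_m\times\mathfrak{S}_n$-equivariance is then automatic from naturality in $V$ combined with the $\mathfrak{S}$-module compatibility of $\varphi$.

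The hardest step, I expect, is verifying that the resulting $\alpha$ is compatible with the cooperad structure of $\mathcal{C}$, which amounts to coassociativity of the transported coproduct on $\mathcal{A}(V)$. Morally this should be immediate because $\varphi$ is an isomorphism of $\mathfrak{S}$-modules and the transport is along that isomorphism, so coassociativity on $\mathcal{C}^c(V)$ transfers verbatim; the concrete work lies in the Schur-component bookkeeping needed to guarantee that each piece of the decomposed coproduct lands in the correct summand of $\mathcal{A}^{\otimes m}(n)$ and that the two constructions are mutually inverse on the homogeneous slice $m=n$, as ensured by Lemma~\ref{phiLn}.
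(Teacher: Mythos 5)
Your proposal is correct and follows essentially the same route as the paper: the forward direction makes precise the paper's terse ``forgetting some information'' (restricting to $m=n$, where $\mathcal{A}^{\otimes n}(n)\cong\mathbb{K}[\mathfrak{S}_n]$, and invoking Lemma~\ref{phiLn}), and the reverse direction is exactly the paper's composite $(\varphi^{-1})^{\otimes k}\circ\gamma^k_{\mathcal{C}}\circ(\operatorname{id}\otimes\varphi\otimes\operatorname{id})$, i.e.\ transporting the coalgebra structure of $\mathcal{C}^c(V)$ along $\varphi$ and reading off arity components, with cooperad-compatibility following from the transported coassociativity diagram just as in the paper.
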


\begin{remark}
If such bijections exist, then $\A$ and $\C$ have the same underlying $\mathfrak{S}_n$-module.
\end{remark}

\begin{proof} 

\begin{itemize}
\item[$ (\alpha) \Rightarrow \varphi$:] An homogeneous confluence law can be directly computed from a confluence law $(\alpha)$ by forgetting some information.

\item[ $\varphi \Rightarrow (\alpha)$:] 
We define from $\varphi$ the associated confluence law $(\alpha)$.
\end{itemize}
We use the following notation $M_n:=\mathcal{A}(n) \otimes_{\mathfrak{S}_n} V^{\otimes n}$ and $H_n:=\varphi(M_n)=\mathcal{C}^*(n) \otimes_{\mathfrak{S}_n} V^{\otimes n}$.

The confluence law is defined as the following composite, using notations of Definition \ref{defcog}:
\begin{align*}
\mathcal{C}(k) \otimes \mathcal{A}(n) &\otimes_{\mathfrak{S}_n}  V^{\otimes n} \xrightarrow{id \otimes \varphi \otimes id^{\otimes n}} \mathcal{C}(k) \otimes \mathcal{C}^*(n) \otimes_{\mathfrak{S}_n} V^{\otimes n} = \mathcal{C}(k) \otimes H_n \\
&\xrightarrow{\gamma^k_\C} 
\sum_{\substack{l_1+\ldots+l_k=n,\\ l_i \geq 1}} \bigotimes_{i=1}^k H_{l_i}  \xrightarrow{(\varphi^{-1})^{\otimes k}} \sum_{\substack{l_1+\ldots+l_k=n,\\ l_i \geq 1}} \bigotimes_{i=1}^k M_{l_i} \\&= \sum_{\substack{l_1+\ldots+l_k=n,\\ l_i \geq 1}} \left( \bigotimes_{i=1}^k \mathcal{A}(l_i) \right) \otimes_{\mathfrak{S}_n} V^{\otimes n}.
\end{align*}

As the operations are functorial in $V$, we get a family of maps $\alpha_{m,n} : \C(m) \otimes \A(n) \rightarrow \A^{\otimes m}(n)$. Moreover, $\varphi$ is a morphism of $\mathfrak{S}$-modules, hence maps $\alpha_{m,n}$ are compatible with the action of the symmetric group $\mathfrak{S}_m \times \mathfrak{S}_n$.

We finally have to show the compatibility with the operad structure of $\C$. This comes from the following commuting diagram, holding for any vector space $V$:

\hspace{-1cm}
\begin{tikzpicture}
  \matrix (m) [matrix of math nodes,row sep=3em,column sep=2em,minimum width=2em] {
     \left(\C(k_1)\otimes \ldots \otimes \C(k_l)\right) \otimes \C(l) \otimes \A(n) \otimes V^{\otimes n} &\C(k) \otimes \A(n) \otimes V^{\otimes n} \\
     \sum_{\sum_{j=1}^l q_j=n} \bigotimes_{j=1}^l \left( \C(k_j)\otimes \A(q_j) \otimes V^{\otimes q_j}\right)   & \sum_{\sum_{i=1}^k r_i = n} \bigotimes_{i=1}^k \left(\A (r_i)\otimes V^{\otimes r_i} \right),
   \\};
  \path[-stealth]
    (m-2-1) edge node [below] {$(\alpha)^{\otimes l}$} (m-2-2)
        (m-1-1)    edge node [above] {$\mu_\C \otimes id$} (m-1-2)
    (m-1-1) edge node [left] {$(id)^{\otimes l} \otimes \alpha$} (m-2-1)
    (m-1-2) edge node [right] {$\alpha$} (m-2-2);
\end{tikzpicture}

where $k = k_1 + \ldots + k_l$.

This diagram is obtained by using the definition of $\alpha$ as the composition given above. Using this definition, the commutativity of the diagram relies on two facts: most of the operations do not occur on the same part of the tensorial product and the definition of the coproduct gives itself the commutativity of one part of the diagram thanks to the following diagram, holding on any of the previous $H_n$:

\begin{tikzpicture}
  \matrix (m) [matrix of math nodes,row sep=3em,column sep=2em,minimum width=2em] {
     \left(\C(k_1)\otimes \ldots \otimes \C(k_l)\right) \otimes \C(l) \otimes H_n &\C(k) \otimes H_n  \\
     \sum_{\sum_{j=1}^l q_j=n} \bigotimes_{j=1}^l \left( \C(k_j)\otimes H_{q_j} \right)  & \sum_{\sum_{i=1}^k r_i = n} \bigotimes_{i=1}^k H_{r_i},
   \\};
  \path[-stealth]
    (m-2-1) edge node [below] {$(\alpha)^{\otimes l}$} (m-2-2)
        (m-1-1)    edge node [above] {$\mu_\C \otimes id$} (m-1-2)
    (m-1-1) edge node [left] {$(id)^{\otimes l} \otimes \gamma^l_\C$} (m-2-1)
    (m-1-2) edge node [right] {$\gamma^{k_1}_\C \otimes \ldots \otimes \gamma^{k_l}_\C$} (m-2-2);
\end{tikzpicture}

where $k = k_1 + \ldots + k_l$.
\end{proof}

\begin{remark} \begin{itemize}
\item  If the confluence law $\alpha$ is given by an isomorphism $\varphi$, we will write equivalently the associated bialgebras $\C^c-_{\varphi}\A$-bialgebras or $\C^c-_{\alpha}\A$-bialgebras.
\item There can be different confluence laws associated to the same couple of operads: see for instance the mixed distributive laws for the Dendriform coDendriform bialgebra computed by Foissy in \cite{Foissy} and the one referred in \cite{DendTridend}.
\end{itemize}
\end{remark}

\emph{Projection}

The last step before reaching the rigidity theorem is to prove the existence of a "good" projection of the coalgebra on the primitive elements, named \emph{idempotent}.

Let us now consider a (not necessarily free or cofree) conilpotent $\C^c-_{\alpha}\A$-bialgebra $\mathcal{H}$. We denote by $\mathcal{F}_n$ the cofiltration on $\mathcal{H}$ and $\varphi$ the $\mathfrak{S}$-module morphism associated to the confluence law.

We first show the following lemmas: 
\begin{lemma}\label{LemGen}
The bialgebra $\mathcal{H}$ is generated, as an algebra, by its primitive elements.
\end{lemma}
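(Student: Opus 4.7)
The plan is to argue by induction on the cofiltration. Since $\mathcal{H}$ is conilpotent, $\mathcal{H} = \bigcup_{n \geq 1} \mathcal{F}_n\mathcal{H}$, so it suffices to show by induction on $n$ that every element of $\mathcal{F}_n\mathcal{H}$ lies in the subalgebra generated by $\mathcal{F}_1\mathcal{H}$. The case $n=1$ is immediate from the definition of $\mathcal{F}_1\mathcal{H}$.

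For the inductive step, fix $x \in \mathcal{F}_n\mathcal{H}$ with $n \geq 2$ and focus on the top-arity component $\Delta_H^n(x) \in \mathcal{C}^*(n) \otimes_{\mathfrak{S}_n} H^{\otimes n}$ of $\delta_H(x)$. First I would check, via coassociativity of $\delta_H$ combined with the vanishing of all arity-$p$ coproducts on $x$ for $p > n$, that $\Delta_H^n(x)$ in fact lies in the smaller subspace $\mathcal{C}^*(n) \otimes_{\mathfrak{S}_n} (\mathcal{F}_1\mathcal{H})^{\otimes n}$: any non-trivial cooperation applied to one of the $n$ tensor factors would, by coassociativity, produce a coproduct of $x$ of total arity strictly greater than $n$, hence zero.

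Using the isomorphism of $\mathfrak{S}$-modules $\varphi^{-1}_n\colon \mathcal{C}^*(n) \to \mathcal{A}(n)$ furnished by the confluence law, I set
\begin{equation*}
\tilde{x} := (\varphi_n^{-1} \otimes \mathrm{id})(\Delta_H^n(x)) \in \mathcal{A}(n) \otimes_{\mathfrak{S}_n} (\mathcal{F}_1\mathcal{H})^{\otimes n},
\end{equation*}
and define $y := m_H^n(\tilde{x}) \in \mathcal{H}$, which by construction belongs to the subalgebra generated by the primitives. I would then show that $\Delta_H^n(y) = \Delta_H^n(x)$, so that $x - y \in \mathcal{F}_{n-1}\mathcal{H}$, and conclude by applying the induction hypothesis to $x - y$.

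The computational heart---and the main obstacle---is the identity $\Delta_H^n(y) = \Delta_H^n(x)$. By the commutative diagram defining $\mathcal{C}^c-_\alpha\mathcal{A}$-bialgebras in Definition \ref{defBialg}, one has $\delta_H(y) = \mathcal{C}^*(m_H) \circ \alpha(\tilde{x})$, so I must unravel the top-arity component of $\alpha$ on $\mathcal{A}(n) \otimes (\mathcal{F}_1\mathcal{H})^{\otimes n}$. From the explicit construction of $\alpha$ in the proof of Proposition \ref{mdl}, the only partition $l_1 + \cdots + l_n = n$ with every $l_i \geq 1$ is $l_i = 1$ for all $i$; the corresponding piece of $\gamma_\mathcal{C}^n$ reduces to the duality pairing that recovers $\varphi_n$, and the subsequent $(\varphi^{-1})^{\otimes n}$ acts as the identity on $\mathcal{A}(1)^{\otimes n} = \mathbb{K}$. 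Chasing this through yields $\Delta_H^n(y) = (\varphi_n \otimes \mathrm{id})(\tilde{x}) = \Delta_H^n(x)$, which is precisely the required matching and closes the induction.
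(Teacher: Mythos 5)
Your proposal is correct and is essentially the paper's own argument in coordinate-free form: the paper likewise reduces $x\in\mathcal{F}_n\mathcal{H}$ by an element $y$ of the subalgebra generated by primitives obtained by inverting $\varphi_n$ on the top-arity coproduct component (written there in a chosen basis of $\mathcal{A}(n)$ with explicit coefficients $c^i_\tau=\frac{1}{n!}d^i_\tau$), shows $x-y\in\mathcal{F}_{n-1}\mathcal{H}$, and descends the cofiltration by minimality rather than by explicit induction. The one place your ``chasing through'' needs the same care the paper's $\frac{1}{n!}$ provides is the normalisation in the characteristic-zero identification of invariants and coinvariants of $H^{\otimes n}$, which could otherwise introduce a (harmless, uniform) nonzero scalar between $\Delta^n_H(y)$ and $\Delta^n_H(x)$; your justification via coassociativity that $\Delta^n_H(x)$ lands in $\mathcal{C}^*(n)\otimes_{\mathfrak{S}_n}(\mathcal{F}_1\mathcal{H})^{\otimes n}$ is a step the paper asserts without proof.
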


\begin{proof} We proove the lemma by reductio ad absurdum. 
Let us consider the minimal integer $n \geq 2$ such that there exists an element $x$ in $\F_n\mathcal{H}$, which is not in the subalgebra generated by primitive elements of $\mathcal{H}$. We will construct a $y$ in the subalgebra generated by primitive elements of $\mathcal{H}$ such that $x-y$ belongs to $\F_{n-1}\mathcal{H}$, and then is in the subalgebra generated by primitive elements.

For any (linear) basis $(\mu_1, \ldots, \mu_k)$ of $\A(n)$, there exists a basis $(\delta_1, \ldots, \delta_k)$ of $\C(n)$ such that $\varphi(\mu_i) = \delta_i^*$. We denote by $a_{i,j}^\sigma$ the real number $\delta_i^*(\delta_j^{\sigma})$, denoting by $\delta_j^{\sigma}$ the action of an element $\sigma$ of the symmetric group on an element $\delta_j$ of the considered $\mathfrak{S}$-module. The bialgebra satisfies a confluence law and then also a homogeneous confluence law. Hence, we have for any primitive elements $p_1, \ldots, p_n$, according to Equation \eqref{crochet}:
\begin{equation}\label{abv}
\delta_j \circ \mu_i(p_1, \ldots, p_n) = \sum_{\sigma \in \mathfrak{S}_n} a_{i,j}^\sigma p_{\sigma(1)} \otimes \ldots \otimes p_{\sigma(n)}. 
\end{equation}
As $x$ belongs to $\F_n\Hc$, for any $j$ between $1$ and $k$ the coproduct $\delta_j$ of the element $x$ can be written as:
\begin{equation}
\delta_j(x) =  \sum_{(p_1, \ldots, p_n) \in \F_1\Hc } \sum_{\tau \in \mathfrak{S}_n} d^j_{\tau} p_{\tau(1)} \otimes \ldots \otimes p_{\tau(n)}.
\end{equation}

Without loss of generality, we can consider the case:
 
\begin{equation} \label{x}
\delta_j(x)=\sum_{\tau \in \mathfrak{S}_n} d^j_{\tau} p_{\tau(1)} \otimes \ldots \otimes p_{\tau(n)}
\end{equation}
for a given tuple of primitive elements $(p_1, \ldots, p_n)$, as $x$ can be decomposed as a sum of such elements.

Let us now consider an element $y = \sum_{i=1}^k \sum_{\tau \in \mathfrak{S}_n} c_\tau^i \mu_i(p_{\tau(1)}, \ldots, p_{\tau(n)})$. Let us determine the appropriate $c_\tau^i$. Then, we have, using Equation \eqref{abv}:
\begin{equation} \label{y}
\delta_j(y) = \sum_{i=1}^k \sum_{\tau, \sigma \in \mathfrak{S}_n} c_\tau^i a_{i,j}^\sigma p_{\tau \circ \sigma(1)} \otimes \ldots \otimes p_{\tau \circ \sigma(n)}.
\end{equation}
Then, using Equations \eqref{x} and \eqref{y}, for all $j \in \{1, \ldots, k\}$, $\delta_j(x) = \delta_j(y)$ is equivalent to:
\begin{equation} \label{r1}
d^j_{\tau} = \sum_{i=1}^k \sum_{\sigma \in \mathfrak{S}_n} c^i_{\tau \circ \sigma^{-1}} a_{i,j}^\sigma.
\end{equation}

Moreover, for any $j$, $\delta_j^{\sigma} = \sum_{i=1}^k  a_{i,j}^\sigma \delta_i$, which gives on $x$, for any $\tau, \sigma \in \mathfrak{S}_n$:
\begin{equation} \label{r2}
d^j_\tau = \sum_{i=1}^k a_{i,j}^\sigma d^i_{\tau \circ \sigma^{-1}}
\end{equation}

Thanks to Equations \eqref{r1} and \eqref{r2}, choosing $c^i_\sigma = \frac{1}{n!} d^i_{\sigma}$ for any $i$ and $\sigma$ gives $y$ in the subalgebra of $\mathcal{H}$ generated by primitive elements such that $x-y$ is in $\F_{n-1}\mathcal{H}$, hence $x-y$ is also in the subalgebra of $\mathcal{H}$ generated by primitive elements by minimality of $n$ and so is $x$.
\end{proof}

\begin{lemma} \label{decomp} The vector space $\mathcal{F}_{n-1}$ admits a supplementary space in $\mathcal{F}_n$, which is $M_n=\{\mu(p_1, \ldots, p_n)| \mu \in \A(n), p_i \in \F_1\}$. 
\end{lemma}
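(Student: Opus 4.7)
I would prove the lemma by splitting it into three claims: (a) $M_n \subseteq \mathcal{F}_n$, (b) $\mathcal{F}_n \subseteq \mathcal{F}_{n-1} + M_n$, and (c) $\mathcal{F}_{n-1} \cap M_n = 0$.

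For (a), given $\mu \in \A(n)$, primitive elements $p_1, \ldots, p_n$, and a cooperation $\delta \in \C(m)$ with $m > n$, the compatibility diagram of Definition \ref{defBialg} computes $\delta(\mu(p_1, \ldots, p_n))$ through the confluence law $\alpha_{m,n} \colon \C(m) \otimes \A(n) \to \A^{\otimes m}(n) = \bigoplus_{l_1 + \cdots + l_m = n,\, l_i \geq 1} \A(l_1) \otimes \cdots \otimes \A(l_m)$. For $m > n$ there is no partition of $n$ into $m$ positive parts, so this target is zero and $\delta(\mu(p_1, \ldots, p_n)) = 0$; thus $M_n \subseteq \mathcal{F}_n$. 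Claim (b) is already present in the argument of Lemma \ref{LemGen}: for $x \in \mathcal{F}_n$, the choice $c^i_\sigma := \tfrac{1}{n!} d^i_\sigma$ produces $y \in M_n$ with $\delta_j(x - y) = 0$ for every $j$, so $x - y \in \mathcal{F}_{n-1}$ and $x = (x - y) + y$.

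The substantive claim is (c). Given $y \in \mathcal{F}_{n-1} \cap M_n$, pick linearly independent primitives $p_1, \ldots, p_N$ spanning all primitives appearing in some representation of $y$, set $V := \vect(p_1, \ldots, p_N) \subseteq \mathcal{F}_1\mathcal{H}$, and write $y = f(\xi)$ for some $\xi \in \A(n) \otimes_{\mathfrak{S}_n} V^{\otimes n}$, where $f \colon \mu \otimes \vec{p} \mapsto \mu(\vec{p})$ is well defined on coinvariants by the $\mathfrak{S}_n$-equivariance of $m_{\mathcal{H}}$. Because $\A^{\otimes m}(n) = 0$ for $m > n$ and $\A^{\otimes n}(n) = \K$, the $n$-th component of $\alpha$ applied to $\mu \otimes \vec{p} \in \A(n) \otimes V^{\otimes n}$ is exactly $\varphi_n(\mu) \otimes \vec{p}$, as encoded in Lemma \ref{phiLn} and Equation \eqref{crochet}. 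Thus the compatibility diagram of Definition \ref{defBialg}, projected onto the arity-$n$ part of $\C^*(\mathcal{H})$, factors $\delta^n_\mathcal{H} \circ f$ as the composite
\begin{equation*}
\A(n) \otimes_{\mathfrak{S}_n} V^{\otimes n} \xrightarrow{\varphi_n \otimes \mathrm{id}} \C^*(n) \otimes_{\mathfrak{S}_n} V^{\otimes n} \hookrightarrow \C^*(n) \otimes_{\mathfrak{S}_n} \mathcal{H}^{\otimes n},
\end{equation*}
the second arrow being induced by the inclusion $V \hookrightarrow \mathcal{H}$. The first arrow is an isomorphism by hypothesis on $\varphi_n$, and the second is injective because $V^{\otimes n} \hookrightarrow \mathcal{H}^{\otimes n}$ over a field and coinvariants agree with invariants in characteristic $0$. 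Consequently $y \in \mathcal{F}_{n-1}$ forces $\delta^n_\mathcal{H}(y) = 0$, hence $\xi = 0$ and $y = 0$.

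The main obstacle I expect is the factorization in (c): establishing that the top-arity component of $\alpha$ restricted to $\A(n) \otimes V^{\otimes n}$ is precisely $\varphi_n \otimes \mathrm{id}$. Once this identification is in hand, (c) reduces to a routine injectivity chase; (a) and (b) are essentially mechanical bookkeeping on the confluence law and on the construction already performed in Lemma \ref{LemGen}.
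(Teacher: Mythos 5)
Your proof is correct and follows the same three-part decomposition as the paper's own proof: $M_n \subseteq \F_n$ from the vanishing of the confluence law in arities $m>n$, $\F_n \subseteq \F_{n-1}+M_n$ by reusing the construction of Lemma \ref{LemGen}, and $M_n \cap \F_{n-1}=\{0\}$ from the injectivity of $\varphi_n$. Your step (c) is in fact a more careful rendering of the paper's one-line argument (``as $\varphi$ is injective, there is $\delta\in\C(n)$ with $\delta\circ\mu(p_1,\ldots,p_n)\neq 0$''): by choosing a basis of primitives and working in the coinvariant space $\A(n)\otimes_{\mathfrak{S}_n}V^{\otimes n}$, you rule out both cancellations among the tensors $p_{\sigma(1)}\otimes\cdots\otimes p_{\sigma(n)}$ and the passage from a single $\mu(p_1,\ldots,p_n)$ to a general linear combination, two points the paper leaves implicit.
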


\begin{proof}
\begin{itemize}
\item First, we have by definition $\F_{n-1} \subseteq \F_{n}$.

\item Let us now show that $M_n \cap \F_{n-1} = \{0\}$, for all $n$.  Let us consider an element $\mu(p_1, \ldots, p_n)$ in $M_n \setminus \{0\}$. As $\varphi$ is injective, there is an element $\delta \in \C(n)$ such that $\delta \circ \mu (p_1, \ldots, p_n) \neq 0$, thus $\mu(p_1, \ldots, p_n) \notin \F_{n-1}$.

\item Then, we have $M_n \subset \F_n$, for all $n$. Thus, $\F_n \supset \F_{n-1} \oplus M_n$. This comes from Equation \eqref{crochet}.

\item Let us prove that $\F_n \subset \F_{n-1} \oplus M_n$. As an algebra, $\mathcal{H}=\A(V)/(R)$. By Lemma \ref{LemGen}, we have that $V \subseteq \F_1$. Let us consider an element $x \in \mathcal{F}_n$. Then we can write (not necessarily uniquely) $x$ as:
\begin{equation*}
x = \sum_{j \geq 1} \mu_j(p^j_1, \ldots, p^j_j),
\end{equation*}
 where $p^j_k \in \F_1$ and $\mu_j \in\A(j)$.
 
 From Equation \eqref{crochet}, we have $M_j \subset \F_j$. From the bijectivity of $\varphi$, $M_j \cap \F_{j-1} = \{0\}$ (see the second point of this proof). Then, as $x \in \mathcal{F}_n$, we get the decomposition:
 \begin{equation*}
x = y + z,
\end{equation*}
 where $y \in \F_{n-1}$ and $z \in M_n$.
\end{itemize}
\end{proof}

For any element $x \in \mathcal{F}_n$, we can then define the projection $e$ on $\mathcal{F}_1 = M_1$ parallel to the $M_j$, $j>1$.

The map $e$ is linear and satisfies $e \circ e = e$ and $\Im(e) = \mathcal{F}_1 = \Prim(\mathcal{H})$. We will call this map the \emph{idempotent} associated to $\C^c-_{\alpha}\A$-bialgebras (see Corollary \ref{unique}).

\subsection{Rigidity theorem.}\label{rig}
We now give a new formulation of rigidity theorem which takes into account the previous subsections and confluence laws.

\begin{theorem} \label{thmrig} Let $\K$ be a field of characteristic $0$ and let us consider two connected algebraic operads $\A$ and $\C$, such that $\A(n)$ and $\C(n)$ are finite dimensional vector spaces.
To any family of $\mathfrak{S}_n$-modules isomorphisms $\varphi_n: \A(n) \rightarrow \C^*(n) $ can be associated a confluence law $(\alpha)$ such that any conilpotent $\C^c-_{\alpha}\A$-bialgebras is free and cofree over the vector space of its primitive elements
\begin{equation*}
\A(\Prim \mathcal{H})\cong \mathcal{H} \cong \C^c(\Prim \mathcal{H}).
\end{equation*}
Moreover, if any free and cofree $\C^c-_{\alpha}\A$-bialgebra is also a $\C^c-_{\tilde{\alpha}}\A$-bialgebra, with $\tilde{\alpha}$ a confluence law or a mixed distributive laws, then any conilpotent $\C^c-_{\tilde{\alpha}}\A$-bialgebras is free and cofree over its primitive elements. In other words, the result does not depend on the choice of a confluence law.
\end{theorem}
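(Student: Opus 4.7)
The plan is to combine Proposition \ref{mdl} with Lemmas \ref{LemGen} and \ref{decomp} and then assemble mutually inverse isomorphisms $\Phi : \A(\Prim \Hc) \to \Hc$ of $\A$-algebras and $\Psi : \Hc \to \C^c(\Prim \Hc)$ of $\C^c$-coalgebras. First, Proposition \ref{mdl} turns the given family $\varphi_n$ into the confluence law $(\alpha)$, and for any conilpotent $\C^c-_{\alpha}\A$-bialgebra $\Hc$ the preliminary results apply: Lemma \ref{LemGen} tells us that $\Hc$ is generated as an $\A$-algebra by $\Prim \Hc$, Lemma \ref{decomp} supplies the direct sum decomposition $\F_n \Hc = \F_{n-1}\Hc \oplus M_n$ with $M_n = \{\mu(p_1,\dots,p_n)\}$, and the idempotent $e : \Hc \to \Prim\Hc$ introduced just before the theorem is the projection on $\F_1\Hc$ parallel to the $M_j$ for $j > 1$.

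For freeness, I would build $\Phi$ by extending $\iota_\Hc$ through the universal property of the free algebra $\A(\Prim \Hc)$; surjectivity is immediate from Lemma \ref{LemGen}. For injectivity, I would compare the natural arity grading of $\A(\Prim \Hc)$ with the cofiltration of $\Hc$: if some nonzero homogeneous element $\xi \in \A(n) \otimes_{\mathfrak{S}_n} (\Prim \Hc)^{\otimes n}$ were in $\ker \Phi$, then $\Phi(\xi)$ would belong to both $M_n$ and $\F_{n-1}\Hc$, contradicting the direct sum of Lemma \ref{decomp}.

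For cofreeness, I would construct $\Psi$ using the idempotent $e$ and the coproducts $\gamma^n_\Hc$: on the arity $n$ component, $\Psi$ sends $x \in \Hc$ to the element of $\C^*(n) \otimes_{\mathfrak{S}_n} (\Prim \Hc)^{\otimes n}$ obtained by post-composing $\gamma^n_\Hc(-,x)$ with $e^{\otimes n}$, and conilpotence of $\Hc$ ensures that only finitely many components are nonzero on any fixed $x$. On each graded piece $M_n$ the map $\Psi$ coincides with $\Phi^{-1}$ up to the identification $\varphi_n$ made explicit by Equation \eqref{crochet}, so bijectivity of each $\varphi_n$ transfers to bijectivity of $\Psi$. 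The step I expect to be the main obstacle is verifying that $\Psi$ is a genuine morphism of $\C^c$-coalgebras, not merely a graded linear map; this requires chasing a diagram in which the coassociativity of $\C^*$, the confluence law $(\alpha)$ in its full non-homogeneous form, and the idempotency $e \circ e = e$ all interact.

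For the independence statement, I would fix a conilpotent $\C^c-_{\tilde{\alpha}}\A$-bialgebra $\tilde{\Hc}$ and set $V := \Prim \tilde{\Hc}$. The first part of the theorem gives that $\A(V)$ is a free and cofree $\C^c-_{\alpha}\A$-bialgebra with primitive space $V$, and the compatibility hypothesis makes it simultaneously a $\C^c-_{\tilde{\alpha}}\A$-bialgebra. The proofs of Lemmas \ref{LemGen} and \ref{decomp} rely only on the existence of some $\mathfrak{S}$-module isomorphism between $\A(n)$ and $\C^*(n)$ and on the homogeneous portion of the confluence law, both of which remain available for $\tilde{\alpha}$ via Lemma \ref{phiLn}. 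The constructions of $\Phi$ and $\Psi$ therefore replay verbatim with $\tilde{\alpha}$ in place of $\alpha$, yielding $\tilde{\Hc} \cong \A(V) \cong \C^c(V)$ and proving independence from the particular confluence law chosen.
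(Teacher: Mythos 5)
Your outline follows the same architecture as the paper's proof --- lift $\iota_\Hc$ to $\tilde\iota=\Phi:\A(\Prim\Hc)\to\Hc$ by freeness, lift the idempotent $e$ to $\Psi=\tilde e:\Hc\to\C^c(\Prim\Hc)$ by cofreeness, and play the two against each other --- but two of your steps have genuine gaps.

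First, your injectivity argument for $\Phi$ is a non-sequitur: if $\xi\neq 0$ lies in $\ker\Phi$, then $\Phi(\xi)=0$, and $0$ belongs to $M_n\cap\F_{n-1}\Hc=\{0\}$ without contradicting Lemma \ref{decomp} in any way. The direct-sum statement of that lemma concerns elements of $\Hc$; what you actually need is that a nonzero abstract element $\xi\in\A(n)\otimes_{\mathfrak{S}_n}(\Prim\Hc)^{\otimes n}$ has nonzero image, and for that you must produce a cooperation $\delta\in\C(n)$ with $\delta(\Phi(\xi))\neq 0$ via the homogeneous confluence law \eqref{crochet} and the injectivity of $\varphi_n$ (taking care of possible cancellations across the $\mathfrak{S}_n$-orbit). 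The paper does not argue this way at all: it deduces injectivity of $\tilde\iota$ from the identity $\tilde e\circ\tilde\iota=\varphi$, which holds by the uniqueness clause of the cofree couniversal property once both sides are known to be coalgebra morphisms agreeing on primitives.

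Second, and more seriously, you have identified but not filled the central technical step. The paper defines $\tilde e$ abstractly by the couniversal property of $\C^c(\Prim\Hc)$, so $\tilde e$ is a coalgebra morphism by construction; the real work is then to show that $\tilde\iota$ is a \emph{coalgebra} morphism, which is done by expanding the confluence law in its full non-homogeneous form $\delta\circ\mu(p_1,\ldots,p_n)=\sum_i\mu^i_1(\ldots)\otimes\cdots\otimes\mu^i_k(\ldots)$ and checking $(\tilde\iota^{\otimes k})\circ\delta=\delta\circ\tilde\iota$ on elements $\mu(p_1,\ldots,p_n)$. You instead define $\Psi$ by an explicit formula $e^{\otimes n}\circ\gamma^n_\Hc$ and explicitly flag that you cannot verify it is a coalgebra morphism; that verification (or the equivalent one for $\tilde\iota$) is exactly the content that cannot be skipped, and your subsequent claim that $\Psi|_{M_n}$ agrees with $\varphi_n\circ\Phi^{-1}$ is precisely the identity $\tilde e\circ\tilde\iota=\varphi$, which is unavailable until that morphism property is established. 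Finally, the paper also needs a separate minimal-counterexample argument for the injectivity of $\tilde e$, which your proposal does not address.
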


Then, on any $\C^c-_\alpha \A$-bialgebra the idempotent is exactly the projection on $\F_1$ parallel to the $M_n$ (see \ref{decomp}), hence the following result:
\begin{corollary} \label{unique}
The idempotent $e$ is unique.
\end{corollary}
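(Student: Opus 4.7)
The plan is to deduce uniqueness directly from Lemma \ref{decomp} by observing that the decomposition it provides is canonical, not merely one choice among many. The idempotent $e$ was defined as the projection onto $\mathcal{F}_1=M_1$ parallel to the subspaces $M_n$ for $n\geq 2$, so uniqueness amounts to showing that this decomposition depends only on the $\C^c-_\alpha\A$-bialgebra structure of $\mathcal{H}$, with no auxiliary choices entering.

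First, I would make explicit that the ingredients of the decomposition are intrinsic. The subspace $\mathcal{F}_1\mathcal{H}=\Prim(\mathcal{H})$ is determined solely by the $\C^c$-coalgebra structure via the cofiltration. The subspaces $M_n=\{\mu(p_1,\ldots,p_n)\mid \mu\in\A(n),\,p_i\in\mathcal{F}_1\mathcal{H}\}$ are then determined by the $\A$-algebra structure applied to these primitives. Neither definition involves a choice of basis, of ordering, or of presentation; both are fully determined by the data of $\mathcal{H}$ as a $\C^c-_\alpha\A$-bialgebra.

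Next, I would combine Lemma \ref{decomp} with the conilpotence hypothesis to obtain a canonical direct sum decomposition of the whole bialgebra. By Lemma \ref{decomp}, $\mathcal{F}_n=\mathcal{F}_{n-1}\oplus M_n$ for every $n\geq 2$. An immediate induction then gives
\begin{equation*}
\mathcal{F}_n=\bigoplus_{k=1}^n M_k,
\end{equation*}
and since $\mathcal{H}$ is conilpotent, $\mathcal{H}=\bigcup_{n\geq 1}\mathcal{F}_n=\bigoplus_{n\geq 1}M_n$. This decomposition is uniquely specified by the bialgebra, by the previous paragraph.

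Finally, any linear projection is determined by its image together with a complement of that image in the ambient space. Since $e$ is, by the preceding discussion, the projection with image $M_1=\Prim(\mathcal{H})$ and complement $\bigoplus_{n\geq 2}M_n$, and both of these subspaces are intrinsic to $\mathcal{H}$, the map $e$ is uniquely determined. There is no substantive obstacle here: the work was already done in Lemma \ref{decomp} and in the identification of $e$ with the projection onto $\mathcal{F}_1$ parallel to the $M_n$. The only conceptual point worth spelling out is that the $M_n$ are defined purely from $\Prim(\mathcal{H})$ and the algebra structure, so that two \emph{a priori} different constructions of an idempotent satisfying the characterising property must yield the same linear map.
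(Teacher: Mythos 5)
Your proposal is correct and follows essentially the same route as the paper: the paper derives the corollary from the remark immediately preceding it, namely that $e$ is the projection onto $\mathcal{F}_1$ parallel to the $M_n$ from Lemma \ref{decomp}, so uniqueness is exactly the canonicity of that direct-sum decomposition. Your write-up merely makes explicit the intermediate steps (intrinsic definition of $\mathcal{F}_1$ and the $M_n$, and the use of conilpotence to pass from $\mathcal{F}_n=\bigoplus_{k=1}^n M_k$ to $\mathcal{H}=\bigoplus_{n\geq 1}M_n$) that the paper leaves implicit.
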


The idempotent described above is the same as the one introduced in \cite{GBO}. It is the generalisation of the Eulerian idempotent used in \cite{Patras}, whose origin lays in \cite{Chen}, obtained in the case $\A=\C=\comm$.

We follow here the sketch of the proof of Loday in \cite{GBO} and Patras in \cite{Patras}, using the confluence laws and the idempotent introduced previously. 
\begin{proof} 
We want now to prove that there is an isomorphism between $\mathcal H$ and $\mathcal \A(\mbox{Prim\ }\mathcal H)$, where the latter is the free and cofree bialgebra over $\prim \Hc$. We call $i$ and $p$ respectively the canonical injection from $\mbox{Prim\ }\mathcal{H}$ to $\A(\prim \Hc)$ and surjection from $\C^c(\prim \Hc)$ to $\prim \Hc$.
Let us define $\tilde e:\mathcal H\to  \mathcal \C^c(\mbox{Prim\ }\mathcal H)$ the unique lifting of $e$ by the universal property of the $\C$-conilpotent cofreeness. Consider $\iota:\mbox{Prim\ }\mathcal H\to \mathcal H$ the natural injection and $\tilde\iota:\mathcal \A(\mbox{Prim\ }\mathcal H)\to\mathcal H$ its lifting by the universal property of $\mathcal P$ freeness. We represent below these morphisms:
\begin{center}

\begin{tikzpicture}
  \matrix (m) [matrix of math nodes,row sep=3em,column sep=4em,minimum width=2em] {
     \A(\prim \Hc) &  \\
     \Hc & \prim \Hc \\
     \C^c(\prim \Hc)& \\
      \\};
  \path[-stealth]
    (m-1-1) edge node [left] {$\tilde \iota$} (m-2-1)
    (m-2-2.north) edge node [above] {$i$} (m-1-1.south east)
    (m-3-1.north east) edge node [below] {$p$} (m-2-2.south)
     (m-2-1.south east)edge node [below] {$e$} (m-2-2.south west)
     (m-2-2.north west) edge node [above] {$\iota$} (m-2-1.north east)
    (m-2-1) edge node [left] {$\tilde e$} (m-3-1)
    (m-1-1.south west) edge[bend right] node [left] {$\varphi$} (m-3-1.north west);
\end{tikzpicture}
\end{center}
\vspace{-1.2cm}

First, $\tilde\iota$ is a bialgebra morphism: it is an algebra morphism by the universal property and we show using the filtration of $\mathcal H$ provided by the conilpotency of $\mathcal H$ that it is also a coalgebra morphism. Indeed, by construction, $\F_1 \A(\prim \Hc) = \prim \Hc$ is sent bijectively to $\F_1 \Hc=\prim \Hc$. Let us now consider $x\in \F_{n} \A(\prim \Hc)$. By freeness, there exists $p_1, \ldots, p_n$ in $\F_1 \A(\prim \Hc)$ and $\mu \in \A(n)$ such that $x = \mu(p_1, \ldots, p_n)$. Hence, we have for any cooperation $\delta \in \C(k)$:
\begin{equation*}
(\tilde{\iota} \otimes \ldots \otimes \tilde{\iota}) \circ \delta(x) = (\tilde{\iota} \otimes \ldots \otimes \tilde{\iota}) \circ \delta \circ \mu (p_1, \ldots, p_n).
\end{equation*}
The confluence law can then be denoted, by distributivity, by: 
\begin{equation*}
\delta \circ \mu (p_1, \ldots, p_n)= \sum_{i=1}^l \mu^i_1 \left(p_{\sigma_i(1)}, \ldots, p_{\sigma_i(r^i_1)} \right) \otimes \ldots \otimes \mu^i_k  \left(p_{\sigma_i(n-r^i_k+1)}, \ldots, p_{\sigma_i(n)}\right),
\end{equation*}
for any primitive elements $p_1, \ldots, p_n$, where $\mu^i_j$ has arity $r^i_j$, $\sum_{j=1}^k r^i_j=n$ and  $\sigma_i$ is in $\mathfrak{S}_{\textrm{shuffle}}$, the set of permutations of $\{1, \ldots, n\}$ such that $\sigma_i(\sum_{j=1}^l r^i_j+1)< \ldots < \sigma_i(\sum_{j=1}^l r^i_j+r^i_{l+1})$ for any $0 \leq l \leq k-1$.

Note that the reasoning does not depend on the choice of such a confluence law.

We have:
\begin{align*}
(\tilde{\iota} \otimes \ldots \otimes \tilde{\iota}) \circ \delta(x) &= (\tilde{\iota} \otimes \ldots \otimes \tilde{\iota}) \circ \delta \circ \mu (p_1, \ldots, p_n) \\
 &=  \sum_{i=1}^l \tilde{\iota} \circ \mu^i_1 \left(p_{\sigma_i(1)}, \ldots, p_{\sigma_i(r^i_1)} \right) \otimes \ldots \otimes \tilde{\iota} \circ \mu^i_k  \left(p_{\sigma_i(n-r^i_k+1)}, \ldots, p_{\sigma_i(n)}\right) \\
 &= \sum_{i=1}^l \bigotimes_{j=1}^k \mu^i_j \left(\tilde{\iota}(p_{\sigma_i(r^i_j+1)}), \ldots, \tilde{\iota}(p_{\sigma_i(\sum_{j=1}^l r^i_j+r^i_{l+1})}) \right)  \text{ by algebra morphism} \\
 &= \delta \circ \mu (\tilde{\iota}(p_1), \ldots, \tilde{\iota}(p_n)) \text{with the confluence law} \\
 &= \delta \circ \tilde{\iota} \circ \mu (p_1, \ldots, p_n) \text{ by algebra morphism} \\
 &= \delta \circ \tilde{\iota}(x).
\end{align*}
This proves that $\tilde{\iota}$ is a bialgebra isomorphism.

\vspace{0.5cm}

This implies that $\tilde e \circ \tilde{\iota}$ is a coalgebra morphism, which is the identity on primitive elements. Hence, we have by cofreeness the equality $\tilde e \circ \tilde{\iota} = \varphi$, which implies that $\tilde{\iota}$ is injective and $\tilde{e}$ surjective. Finally, $\tilde{e}$ is also injective: otherwise, there would be a minimal integer $m$ ($m>1$) such that there exists $y \in \F_m \Hc$ such that $\tilde{e}(y)=0$. By definition of $\F_m \Hc$, there exists a cooperation $\delta \in \C(m)$ such that $\delta(y) \neq 0$. Using the fact that $e$ is a coalgebra morphism, we obtain $\delta \circ \tilde{e} (y)= (\tilde{e} \otimes \ldots \otimes \tilde{e}) \circ \delta (y)= 0$ and $\delta(y) \neq 0$ which contradicts the minimality of $m$.

Finally, the map $\tilde e: \Hc \to \C^c(\prim \Hc)$  is a vector space isomorphism which preserves the graduation and the result follows using the isomorphim $\varphi$ between $\A(\prim \Hc)$ and $\C^c(\prim \Hc)$.
\end{proof}

An immediate result of this theorem is the following corollary:

\begin{corollary} Given two algebraic operads $\C$ and $\A$ and a confluence law $\alpha$ induced by an isomorphism $\varphi: \A \rightarrow \C^*$, an $\A$-algebra $\mathcal{H}$ is free if and only if it is possible to define a $\C^c$-coproduct on $\mathcal{H}$ satisfying the confluence law $\alpha$.
\end{corollary}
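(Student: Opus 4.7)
The plan is to derive both implications of the stated equivalence directly from Theorem \ref{thmrig} together with the explicit construction of $\alpha$ given in Proposition \ref{mdl}; the corollary should be essentially a bookkeeping exercise once these are in hand.

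For the forward direction, I would start from a free $\A$-algebra $\Hc = \A(V)$ and use the isomorphism $\varphi_V : \A(V) \to \C^c(V)$ induced by $\varphi$ to transport the canonical $\C^c$-coalgebra structure of $\C^c(V)$ onto $\A(V)$. The key point is that $\alpha$ was defined in Proposition \ref{mdl} precisely as the composite through $\varphi$ and the cooperad structure map of $\C$: on products $\mu(v_1,\dots,v_n)$ of generators $v_i\in V=\mathcal{F}_1\A(V)$, the formula \eqref{crochet} is exactly the commutativity of the diagram of Definition \ref{defBialg}. Conilpotency is automatic, since the natural grading of $\C^c(V)$ is exhaustive and is preserved by $\varphi_V$.

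For the backward direction, I would observe that if $\Hc$ is an $\A$-algebra endowed with a $\C^c$-coproduct satisfying $\alpha$, then $\Hc$ is by definition a $\C^c-_\alpha\A$-bialgebra. Assuming conilpotency, Theorem \ref{thmrig} then produces an $\A$-algebra isomorphism $\A(\prim \Hc) \cong \Hc$, so $\Hc$ is free on its space of primitive elements. The isomorphism $\varphi$ provides the confluence law needed to invoke Theorem \ref{thmrig}, so there is no additional compatibility to check.

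The main obstacle I anticipate is the handling of conilpotency in the backward direction: the hypothesis of Theorem \ref{thmrig} is conilpotency, and the corollary as written does not mention it explicitly. Either it should be included implicitly in the phrase \emph{define a $\C^c$-coproduct satisfying $\alpha$}, or one has to argue that any $\C^c$-coproduct satisfying $\alpha$ on an $\A$-algebra automatically makes $\Hc$ conilpotent. A short argument along the lines of Lemma \ref{LemGen} and Lemma \ref{decomp}, using that $\alpha$ determines $\delta\circ\mu$ on products of primitives and that $\Hc$ is generated as an $\A$-algebra by a set mapping into $\mathcal{F}_1\Hc$, should yield exhaustion of the cofiltration $\mathcal{F}_n\Hc$. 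Once this is in place, the corollary is a direct consequence of Theorem \ref{thmrig} and the construction of $\alpha$ from $\varphi$.
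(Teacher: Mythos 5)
Your proposal is correct and follows essentially the same route as the paper: for the forward direction the paper likewise equips the free algebra $\A(V)$ with the $\C^c$-coalgebra structure coming from the confluence law (i.e.\ transported through $\varphi_V$), and for the backward direction it simply invokes the rigidity theorem. Your extra care about conilpotency in the backward direction is a reasonable observation --- the paper's two-line proof leaves that hypothesis implicit just as the corollary's statement does --- but it does not change the substance of the argument.
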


\begin{proof}
\begin{description}
\item[$\Rightarrow$] This is a consequence of the existence of a confluence law. If $\Hc$ is free, then there exists a vector space $V$ such that $\Hc=\A(V)$ and then $\Hc$ is naturally equipped with a structure of $\C^c-\A$-bialgebras thanks to the mixed distributive law $\alpha$.

\item[$\Leftarrow$] This is a consequence of the rigidity theorem.
\end{description}
\end{proof}

This provides an efficient tool to solve the problem of freeness of algebras encoded by an operad (see \cite{EPLM}) and to give generators and divisibility in free algebras.

Moreover, another application of the theorem is the following answer to a conjecture of Loday in \cite{GBO}:

\begin{theorem}
For any connected algebraic operads $\A$ and $\C$ sharing the same underlying finite dimensional $\mathfrak{S}_n$-modules $\A(n)=\C(n)$, there exists a family of $\mathfrak{S}_n$-modules isomorphisms $\varphi_n: \A(n) \to \C^*(n)$, and thus there exists a confluence law $\alpha$ associated to $\varphi$ such that any conilpotent $\C^c-_{\alpha}\A$-bialgebra is free and cofree over its primitives.
\end{theorem}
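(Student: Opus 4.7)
The plan is to show that the hypothesis $\A(n)=\C(n)$ (as $\mathfrak{S}_n$-modules) automatically supplies the family of $\mathfrak{S}_n$-module isomorphisms $\varphi_n : \A(n) \to \C^*(n)$ required by Theorem \ref{thmrig}; once $\varphi_n$ is in hand, the conclusion is a direct invocation of that theorem together with Proposition \ref{mdl} to produce the confluence law $\alpha$.

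The single representation-theoretic input needed is the self-duality of finite-dimensional $\mathfrak{S}_n$-modules over a field $\KK$ of characteristic zero. First I would recall that by Maschke's theorem $\KK[\mathfrak{S}_n]$ is semisimple, so $\C(n)$ decomposes as a direct sum of irreducibles, $\C(n) \cong \bigoplus_{\lambda \vdash n} (S^\lambda)^{m_\lambda}$. Then I would invoke the classical fact that each Specht module $S^\lambda$ is isomorphic to its linear dual: the cleanest justification is that in $\mathfrak{S}_n$ every element is conjugate to its inverse (a permutation and its inverse have the same cycle type), so the character of any representation equals the character of its dual, forcing $V \cong V^*$ for every finite-dimensional $\mathfrak{S}_n$-module $V$. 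Dualising term by term therefore produces an $\mathfrak{S}_n$-equivariant isomorphism $\psi_n : \C(n) \xrightarrow{\sim} \C^*(n)$.

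Composing $\psi_n$ with the tautological $\mathfrak{S}_n$-isomorphism $\A(n) = \C(n)$ (or with any chosen one if the equality is only up to isomorphism) yields the desired $\varphi_n : \A(n) \to \C^*(n)$. Finite-dimensionality of $\A(n)$ and $\C(n)$ and connectedness of the operads are already part of the setup, so the hypotheses of Theorem \ref{thmrig} are met. Applying Proposition \ref{mdl} to the family $(\varphi_n)_n$ extracts a confluence law $\alpha$, and Theorem \ref{thmrig} then gives the rigidity statement: every conilpotent $\C^c-_{\alpha}\A$-bialgebra is free and cofree over its space of primitive elements.

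There is no substantive obstacle here: the theorem is, in essence, a corollary of Theorem \ref{thmrig} once the representation-theoretic self-duality is recognised. If one preferred a more concrete construction of $\psi_n$, one could instead average an arbitrary bilinear form on $\C(n)$ over $\mathfrak{S}_n$ to obtain an invariant form, and verify non-degeneracy on each isotypic component using Schur's lemma; but the character-theoretic argument is cleaner and shows explicitly that no hypothesis beyond the stated one is needed.
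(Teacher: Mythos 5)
Your proposal is correct and follows essentially the same route as the paper: both reduce the theorem to the self-duality of finite-dimensional $\mathfrak{S}_n$-representations in characteristic zero (the paper cites integer-valuedness of symmetric-group characters, you cite the conjugacy of each permutation with its inverse --- two standard justifications of the same character-theoretic fact), and then invoke Theorem \ref{thmrig} with the resulting $\varphi_n:\A(n)\to\C^*(n)$. The extra Maschke/Specht decomposition and the averaging remark are harmless but not needed.
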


\begin{proof}
Any character of a representation of the symmetric group is integer-valued, hence for any representation $W$ of the symmetric group, it is possible to construct a $\mathfrak{S}_n$-module isomorphism between $W$ and its dual $W^*$. Let us choose $W = \A(n)$: we obtain a family of $\mathfrak{S}_n$-modules isomorphisms between $\A(n)$ and $\C^*(n) = \A^*(n)$ satisfying the hypothesis of Theorem \ref{thmrig}. Applying this theorem gives the result.
\end{proof}

The following corollary is then immediate:
\begin{corollary}
If $\A$ and $\C$ are two operads sharing the same underlying $\mathfrak{S}_n$-module, then any free $\A$-algebra is a free $\C$-algebra.
\end{corollary}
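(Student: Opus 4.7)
The plan is to combine the preceding theorem with the earlier corollary characterizing freeness of an $\A$-algebra via the existence of a compatible $\C^c$-coproduct, and then exploit self-duality of $\mathfrak{S}_n$-representations in characteristic zero to pass from cofreeness to freeness. Fix a free $\A$-algebra $\mathcal{H}=\A(V)$. By the preceding theorem, the hypothesis $\A(n)\cong\C(n)$ together with the self-duality $\C(n)\cong\C^*(n)$ produces a family of $\mathfrak{S}_n$-module isomorphisms $\varphi_n:\A(n)\to\C^*(n)$ and an associated confluence law $\alpha$ for which the rigidity theorem applies.

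First I would endow $\mathcal{H}=\A(V)$ with the natural $\C^c$-coproduct obtained from $\alpha$, making it a $\C^c-_\alpha\A$-bialgebra. This is precisely the $(\Rightarrow)$ direction of the earlier corollary: freeness of the $\A$-algebra structure on $\mathcal{H}$ guarantees that the confluence law provides a well-defined $\C^c$-coproduct, so $\mathcal{H}$ becomes a bialgebra in the required sense.

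Second I would check conilpotency. The free $\A$-algebra has the natural arity decomposition $\A(V)=\bigoplus_{n\geq 1}\A(n)\otimes_{\mathfrak{S}_n}V^{\otimes n}$, and by the definition of $\alpha$ any cooperation of arity $k$ vanishes on an element of arity strictly less than $k$, so each homogeneous piece sits inside some $\F_n\mathcal{H}$ and $\mathcal{H}=\bigcup_n\F_n\mathcal{H}$. Then Theorem \ref{thmrig} yields a $\C^c$-coalgebra isomorphism $\mathcal{H}\cong\C^c(\prim\mathcal{H})$; in particular the two sides agree as vector spaces.

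Finally, since every finite-dimensional representation of $\mathfrak{S}_n$ is isomorphic to its dual (characters are integer-valued), one has $\C(n)\cong\C^*(n)$ as $\mathfrak{S}_n$-modules, which gives a natural vector-space isomorphism of Schur functors $\C^c(W)\cong\C(W)$ for every $W$. Composing with the previous isomorphism produces a vector-space isomorphism $\mathcal{H}\cong\C(\prim\mathcal{H})$, along which I can transport the free $\C$-algebra structure of the right-hand side onto $\mathcal{H}$; this exhibits $\mathcal{H}$ as a free $\C$-algebra on $\prim\mathcal{H}$. The step I expect to require the most care is the conilpotency verification, since it relies on showing that the confluence law $\alpha$ genuinely respects the arity filtration on $\A(V)$; everything else is a direct combination of the quoted results with the self-duality of symmetric group representations.
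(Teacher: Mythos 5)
Your argument is correct and is essentially the derivation the paper intends: the corollary is stated there without proof as an immediate consequence of the preceding theorem, and you have simply unpacked that implication (endow $\A(V)$ with the $\C^c$-coproduct coming from $\alpha$, check conilpotency, apply the rigidity theorem to get $\mathcal{H}\cong\C^c(\prim\mathcal{H})$, then use self-duality of $\mathfrak{S}_n$-representations to replace $\C^c$ by $\C$ and transport the free $\C$-algebra structure). Two small remarks: the conilpotency step you flag as the delicate one is in fact automatic, since $\alpha_{m,n}$ takes values in $\A^{\otimes m}(n)$, which is zero for $m>n$ because $\A$ is connected; and the statement also follows in one line from the equality of Schur functors, $\A(V)=\bigoplus_n \A(n)\otimes_{\mathfrak{S}_n}V^{\otimes n}=\bigoplus_n \C(n)\otimes_{\mathfrak{S}_n}V^{\otimes n}=\C(V)$, without invoking the rigidity theorem at all.
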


\begin{remark}
Note that it is not the case for non free algebras.
\end{remark}

\subsection{Orthogonal bases.} \label{orthbas}

In this subsection, we provide an explicit inductive description of confluence laws and idempotent in favourable cases. 

For every $n \geq 1$, we consider, if they exist, two generating sets $B\A_n=(a^n_1,  \ldots, a^n_{i_n})$ and $B\C_n=(c^{n}_1,  \ldots, c^{n}_{i_n})$ of the $\mathfrak{S}_n$-module $\A(n)$ and $\C(n)$ respectively, such that $< \left(c^n_i\right)^{\sigma}, \varphi(a^n_j)>=0$ for all $\sigma \in \mathfrak{S}_n$ and $i \neq j$. Let us remark that to obtain such generating sets, we should take at most one element in every orbit of $\mathfrak{S}_n$ on $\A(n)$. Such a generating set can be found for instance if the only relations in the $\mathfrak{S}_n$-module $\A(n)$ are of the form $x = x^{\sigma}$ for $x$ in $\A(n)$ and $\sigma \in \mathfrak{S}_n$.

Then, considering such a basis, Equation \eqref{crochet} gives:
\begin{align*}
c^n_j \circ a^n_i (p_1, \ldots, p_n) &= \sum_{\sigma \in \mathfrak{S}_n}\left\langle {( c^n_j)}^{\sigma}, \varphi_n(a^n_i)\right\rangle p_{\sigma(1)} \otimes \ldots \otimes p_{\sigma(n)} \\
&= \sum_{\sigma \in \mathfrak{S}_n} \left\langle  {(c^n_j)}^{\sigma}, (c^n_i)^*\right\rangle p_{\sigma(1)} \otimes \ldots \otimes p_{\sigma(n)} \\
&= \sum_{\sigma \in \operatorname{Aut}(c^n_j)} \delta_{i,j} p_{\sigma(1)} \otimes \ldots \otimes p_{\sigma(n)}.
\end{align*}
with $\operatorname{Aut}(c^n_j)=\{\sigma \in \mathfrak{S}_n, (c^n_j)^\sigma = c^n_j\}$.

Let us remark that we have $\operatorname{Aut}(c^n_j)  \supseteq \operatorname{Aut}(a^n_j)$ by definition of $\varphi$, with the equality when $\varphi$ is injective, which will be supposed in what follows. 

The confluence law is then given for any cooperation $\delta \in \C(k)$ and operation $\mu \in \A(n)$ by induction on the cofiltration $\F_{\lambda_1} \otimes \ldots \otimes \F_{\lambda_n}$, with $\sum_{i=1}^{n} \lambda_i = k$ by:
\begin{equation*}
\begin{split}
\alpha_{(\delta,\mu,k)}:= \sum_{\substack{(a^{l_1}_{j_1}, \ldots, a^{l_n}_{j_n}) \\ \in B\A_{\lambda_1} \times \ldots \times B\A_{\lambda_n}}} \frac{1}{\prod_i|\operatorname{Aut}(a_i^{l_i})|} &\left(  T(\otimes_i a_{j_i}^{l_i})- T_{k-1}(\otimes_i a_{j_i}^{l_i}) \right) \\ &\circ \left(c_{j_1}^{l_1} \otimes \ldots \otimes c_{j_n}^{l_n}\right),
\end{split}
\end{equation*}
where $T(\otimes_i a_{j_i}^{l_i})$ is the tensorial product of operations obtained when evaluating $\delta \circ \mu \circ \left(a_{j_1}^{l_1} \otimes \ldots \otimes a_{j_n}^{l_n}\right)$ on primitive elements (see \ref{mdl}) and $T_k(\otimes_i a_{j_i}^{l_i})$ is the tensorial product of operations obtained when evaluating $\alpha_{(\delta,\mu,k)} \circ \left(a_{j_1}^{l_1} \otimes \ldots \otimes a_{j_n}^{l_n}\right)$ on primitive elements.

The maps $\alpha_{(\delta,\mu,k)}$ give the mixed distributive law on elements in $\F_{\lambda_1}\otimes \ldots \otimes \F_{\lambda_n}$, where $\sum_i \lambda_i = k$.

The idempotent is given as the inductive limit of the maps $e_n: \mathcal{H} \rightarrow \mathcal{H}$ defined on the cofiltration $\F_n$ by $e_1=\operatorname{id}$ on $\F_1$ and on $\F_n$ by:
\begin{equation*}
e_{n} = e_{n-1}-\sum_{i} \frac{1}{|Aut(a^n_i)|} e_{n-1} \circ a^n_i \circ c_i^n.
\end{equation*}

Note that by decomposition of Lemma \ref{decomp}, $e_j=e_n$ on $\F_n$ for all $j \geq n$, hence $e$ is the identity on $\F_1$. We prove in what follows that the map obtained is an idempotent:

\begin{lemma} \label{De}
The constructed map satisfies:
$\Delta \circ e = 0$ on any conilpotent $\C^c-_{\varphi} \A$ bialgebra $\Hc$.
\end{lemma}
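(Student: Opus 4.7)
The plan is to show that $e$ maps $\Hc$ into $\F_1 = \prim\Hc$, which at once gives $\Delta\circ e=0$ because primitive elements are annihilated by every cooperation of arity $\geq 2$. I would proceed by induction on the cofiltration, after rewriting the recursion in a convenient factored form and applying Lemma~\ref{decomp}.

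The key observation is that the recursion factors as $e_n = e_{n-1}\circ \pi_n$, where $\pi_n := \operatorname{id} - \sum_i \frac{1}{|\operatorname{Aut}(a_i^n)|}\, a_i^n\circ c_i^n$, viewed as a linear endomorphism of $\Hc$. Using the decomposition $\F_n=\F_{n-1}\oplus M_n$ of Lemma~\ref{decomp}, I would establish two properties of $\pi_n$. First, $\pi_n$ acts as the identity on $\F_{n-1}$, which is immediate from the definition of the cofiltration since every cooperation $c_i^n$ of arity $n\geq 2$ vanishes there. Second, $\pi_n$ annihilates $M_n$: writing $z=a_j^n(p_1,\ldots,p_n)$ with $p_k\in \F_1$ and invoking the explicit orthogonal-basis form of the confluence law derived earlier in this subsection, one gets $c_i^n(z)=\delta_{i,j}\sum_{\sigma\in \operatorname{Aut}(c_j^n)} p_{\sigma(1)}\otimes\cdots\otimes p_{\sigma(n)}$; using the equality $\operatorname{Aut}(c_j^n)=\operatorname{Aut}(a_j^n)$ (coming from injectivity of $\varphi$) and the fact that this group stabilises $a_j^n$, the expression $\frac{1}{|\operatorname{Aut}(a_j^n)|}\, a_j^n\circ c_j^n(z)$ collapses exactly to $z$, so that summing over $i$ yields $\pi_n(z)=0$.

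Granted these two properties, a straightforward induction on $n$ proves $e_n(\F_n)\subseteq \F_1$: the base $n=1$ is the identity, and for $x=y+z$ with $y\in \F_{n-1}$ and $z\in M_n$ one has $e_n(x)=e_{n-1}(\pi_n(x))=e_{n-1}(y)\in\F_1$ by the inductive hypothesis. For the inductive limit, since $\F_n\subseteq \F_{m-1}$ for all $m>n$ the map $\pi_m$ fixes every $x\in\F_n$, so the sequence $(e_m(x))_{m\geq n}$ is eventually constant; conilpotency of $\Hc$ ensures every $x$ belongs to some $\F_n$, so $e(x)$ is well-defined and lies in $\F_1$, whence $\Delta\circ e=0$.

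The step I expect to be the main obstacle is the vanishing $\pi_n|_{M_n}=0$. It relies crucially on the orthogonality of the chosen bases (so that $c_i^n$ applied to an $a_j^n$-element is diagonal in $(i,j)$) and on the equality $\operatorname{Aut}(c_j^n)=\operatorname{Aut}(a_j^n)$, and requires careful bookkeeping of the $\mathfrak{S}_n$-action conventions between the operad and cooperad sides. A secondary care point is to regard each $e_n$ as an honest linear endomorphism of $\Hc$ rather than a partial map on $\F_n$; the factored form $e_n=e_{n-1}\circ\pi_n$ achieves this naturally, which is why it organises the proof so cleanly.
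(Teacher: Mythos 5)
Your proof is correct and follows essentially the same route as the paper's: induction along the cofiltration, with the key step being that the correction term $x-\sum_i\frac{1}{|\operatorname{Aut}(a_i^n)|}\,a_i^n\circ c_i^n(x)$ drops into $\F_{n-1}$, which both arguments reduce to the orthogonality relation $c_i^n\circ a_j^n=\delta_{i,j}\sum_{\sigma\in\operatorname{Aut}(c_j^n)}(\cdot)^\sigma$ together with the equality $\operatorname{Aut}(c_j^n)=\operatorname{Aut}(a_j^n)$. The only cosmetic difference is that you verify this by exhibiting $\pi_n$ as the projection onto $\F_{n-1}$ along $M_n$ (invoking Lemma~\ref{decomp}), whereas the paper checks directly that every generating cooperation $c_j^n$ annihilates $\pi_n(x)$.
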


\begin{proof}[Proof of Lemma \ref{De}]
We prove by induction on $n$ that: 
\begin{equation*}
\Delta \circ e_n(x) = 0 \text{ for } x \in \F_n.
\end{equation*}

As $e_1 = \operatorname{id}$ on primitive elements, we have $\Delta \circ e (x) = \Delta (x) = 0$ for any primitive element $x$.
Let us suppose the property true for any $k \leq n$.
If $x \in \F_k$ for $k \leq n$, $\Delta \circ e_{n+1}(x)=\Delta \circ e_{n}(x)=0$ by induction hypothesis.

If $x \in \F_{n+1}\Hc$, we have
\begin{equation*}
\Delta \circ e_{n+1}(x) = \Delta \circ e_n \left( x - \sum_i \frac{1}{|Aut(a^{n+1}_i)|} a^{n+1}_i \circ c_i^{n+1} (x) \right).
\end{equation*}
However, for any $j$, we have:
\begin{equation*}
\begin{split}
c_j^{n+1} \left( x -\sum_i \frac{1}{|Aut(a^{n+1}_i)|} a^{n+1}_i \circ c_i^{n+1} (x) \right) =& c_j^{n+1}(x) - \frac{1}{|Aut(a^{n+1}_i)|} c_j^{n+1} \\ & \circ a_j^{n+1} \circ c_j^{n+1} (x) = 0.
\end{split}
\end{equation*}
Hence $x - \sum_i \frac{1}{|Aut(a^{n+1}_i)|} a^{n+1}_i \circ c_i^{n+1} (x)$ is in $\F_k$ for $k \leq n$ and $\Delta \circ e_{n+1}(x)=0$.
\end{proof}

This provides an explicit inductive description of the idempotent and the confluence law. In the next section, we apply these constructions.

\section{Dual case}

There is no canonical vector space isomorphism between a vector space and its dual. Moreover, as the morphism $\varphi$ must also be a $\mathfrak{S}_n$-module morphism, not all isomorphisms between $\A(n) \simeq \C(n)$ and $\C^*(n)$ will give rise to a rigidity theorem. We provide in this section a criterion on a given basis to get such a family of $\mathfrak{S}_n$-module isomorphisms and then apply it to several cases.

\subsection{General results.} 

We consider two connected algebraic operads $\A$ and $\C$. To satisfy a rigidity theorem, these operads must have the same underlying $\mathfrak{S}$-modules. Let us now consider a basis $B_n$ of the vector space $\A(n) = \C(n)$, we consider the morphism $\varphi_n: \A(n) \to \C^*(n)$ given by the duality with respect to $B_n$. We denote by $B$ the union $\bigcup_{n \geq 1} B_n$.

Let us now consider a free $\A$-algebra $\Hc$. The basis $B$ induces a basis $B_\Hc$ of $\Hc$. Given an element $x$ of $B_\Hc$ and an operation $\mu \in \C(n)$, the coproduct $\Delta_{\mu}$ given by duality on $x$ is then defined as:
\begin{align*}
\Delta_{\mu}(x)& = \sum_{\substack{x_1, \ldots, x_n \in B_{\Hc} \\ \mu(x_1, \ldots, x_n) \ni \lambda.x \\ \lambda \in \mathbb{K}^*}} \frac{1}{\lambda} x_1 \otimes \ldots \otimes x_n \\
& = \sum_{x_1, \ldots, x_n \in B_{\Hc}} \delta_{x^*(\mu(x_1, \ldots, x_n)) \neq 0} \frac{1}{x^*(\mu(x_1, \ldots, x_n))} x_1 \otimes \ldots \otimes x_n,
\end{align*} 
where $\delta_{x^*(\mu(x_1, \ldots, x_n)) \neq 0}$ is the Kronecker symbol.

Note that thanks to Equation \eqref{crochet}, both definitions coincide.

In operadic terms, this definition can be rewritten, for any operation $S \in \A(k)$, cooperation $T \in \C(l)$ and elements $(e_1, \ldots, e_l)$:
\begin{equation} \label{def2}
C^{T^* (S)}_{(e_1, \ldots, e_l)} = \delta_{C^{T(e_1, \ldots, e_l)}_S \neq 0} \frac{1}{C^{T(e_1, \ldots, e_l)}_S},
\end{equation}
where $C^f_x$ denotes the coefficient of $x$ in $f$.

For the $\varphi_n$ to be $\mathfrak{S}_n$-module morphisms, the basis $B$ has to satisfy some conditions:
\begin{definition}

The basis $B$ is said to be a \emph{compatible basis} if products and dual coproducts expressed in this basis commute with the action of the symmetric group. In other words, for any cooperation $\Delta \in \C(n)$, any operation $\mu \in \A(n)$ and any $\sigma \in \mathfrak{S}_n$, we have:
\begin{equation*}
\Delta \circ (  \mu^{\sigma}) = ( \Delta)^{\sigma^{-1}} \circ \mu.
\end{equation*}

\end{definition}

Thanks to the shape of homogeneous confluence laws (Equation \eqref{crochet}), we obtain directly:

\begin{proposition}
If the considered basis $B$ is a compatible basis, then the family $(\varphi_n)$ is a family of $\mathfrak{S}_n$-module isomorphisms and the rigidity theorem applies: any conilpotent $\C^c-_{\varphi}\A$-bialgebra is free and cofree over the vector space of its primitives. \end{proposition}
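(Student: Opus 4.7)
The plan is to verify the two pieces of the statement in order: first that each $\varphi_n$ is a linear isomorphism of vector spaces, then that it is $\mathfrak{S}_n$-equivariant, and finally to invoke Theorem \ref{thmrig} together with Proposition \ref{mdl}.

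The first step is immediate. By construction, $\varphi_n$ sends each basis element $b \in B_n \subset \A(n) = \C(n)$ to the corresponding dual functional $b^* \in \C^*(n)$. Since $\A(n)$ is finite dimensional and $\{b^* : b \in B_n\}$ is the dual basis of $B_n$, the linear map $\varphi_n$ is automatically a bijection.

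The heart of the argument is the second step, the $\mathfrak{S}_n$-equivariance, and this is where the compatibility hypothesis enters. I would exploit two equivalent descriptions of how the coproducts interact with the products in $\A(V)$: on one side, the homogeneous confluence law attached to $\varphi_n$ via Equation \eqref{crochet}, and on the other side, the direct dual-coproduct formula \eqref{def2} which by Equation \eqref{crochet} coincides with it. With these in hand, testing the identity $\varphi_n(\mu^{\sigma}) = (\varphi_n(\mu))^{\sigma}$ against a basis element $b \in B_n$ reduces to comparing the coefficient of $b$ in $\mu^{\sigma}$ (defining $\varphi_n(\mu^{\sigma})$) with the coefficient of $b^{\sigma^{-1}}$ in $\mu$ (defining the value $\varphi_n(\mu)^{\sigma}$ via the contragredient action $\langle \theta^{\sigma}, \delta\rangle = \langle \theta, \delta^{\sigma^{-1}}\rangle$ on $\C^*(n)$). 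The compatibility condition $\Delta \circ (\mu^{\sigma}) = \Delta^{\sigma^{-1}} \circ \mu$ is precisely the statement that these two coefficients agree, after pairing both sides with primitive inputs and reading off the appropriate tensor component. Thus the compatibility condition on the basis $B$ is equivalent to the $\mathfrak{S}_n$-equivariance of $\varphi_n$ for each $n$.

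For the third step, once the family $(\varphi_n)_n$ is known to consist of $\mathfrak{S}_n$-module isomorphisms $\A(n) \to \C^*(n)$, Proposition \ref{mdl} produces from it the associated confluence law $\alpha$, and the hypotheses of Theorem \ref{thmrig} are satisfied. Applying Theorem \ref{thmrig} then yields exactly the desired conclusion: any conilpotent $\C^c-_{\varphi}\A$-bialgebra (i.e. $\C^c-_{\alpha}\A$-bialgebra) is free and cofree over its vector space of primitive elements.

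The main obstacle is the bookkeeping in Step 2, namely making precise how the operadic compatibility condition on $B$ translates into the equivariance of the dual map. The potential pitfall is a sign or inversion error when matching the contragredient $\mathfrak{S}_n$-action on $\C^*(n)$ with the symbol $\sigma^{-1}$ already appearing in the compatibility definition; one must be careful to treat $\mu^{\sigma}$ consistently on both sides, and to use that $\A(n)$ and $\C(n)$ share the same underlying $\mathfrak{S}_n$-action so that $b^{\sigma}$ is unambiguous. Once this identification is made cleanly, Step 2 is essentially a one-line coefficient check and the remainder of the proof is just citing the existing machinery.
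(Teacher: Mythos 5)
Your proposal is correct and takes essentially the same route as the paper, which in fact offers no written proof beyond the single remark that the result follows ``directly'' from the shape of the homogeneous confluence law in Equation~\eqref{crochet}; your Step~2, reducing the equivariance $\varphi_n(\mu^{\sigma})=(\varphi_n(\mu))^{\sigma}$ to a coefficient comparison governed by the compatibility condition, is precisely the verification the paper leaves implicit, and Steps~1 and~3 (bijectivity of the basis-to-dual-basis map, then Proposition~\ref{mdl} and Theorem~\ref{thmrig}) match the intended argument.
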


\begin{remark} An example of bases of operads which are not compatible is given by the Lyndon basis and the comb basis of the operad Lie (see \cite{HoffVesp} for computation on the comb basis).
\end{remark}

If we consider non symmetric operads, any basis is compatible and then we obtain the following result which gives an explicit family of $\mathfrak{S}$-modules isomorphisms $\varphi$:

\begin{corollary}
For any non symmetric operads $\A$ and $\C$, any basis $B_{\A}$ of $\A$ and $B_{\C}$ of $\C$ there exists a confluence law whose associated isomorphisms are given by sending $B_{\A}$ on the dual basis of $B_{\C}$ such that the rigidity theorem applies. 
\end{corollary}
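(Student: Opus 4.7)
The plan is to reduce the corollary to the preceding proposition by showing that, in the non-symmetric setting, any choice of bases $B_{\A}$ and $B_{\C}$ automatically produces a compatible basis in the sense of the previous definition. The confluence law is then the one produced by the proposition.

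First, I would recall that a non-symmetric operad $\A$ can be regarded as a symmetric operad whose arity-$n$ component is the free $\mathfrak{S}_n$-module $\A^{ns}(n)\otimes \K[\mathfrak{S}_n]$, where $\A^{ns}(n)$ denotes the non-symmetric arity-$n$ vector space and $\mathfrak{S}_n$ acts on the right tensor factor by regular action. A basis $B_{\A}(n)$ of $\A^{ns}(n)$ therefore extends canonically to the full basis $\{b\otimes e_\sigma : b\in B_{\A}(n),\, \sigma\in \mathfrak{S}_n\}$ of the symmetric $\A(n)$, and likewise for $\C$. Since the statement presumes that the rigidity theorem applies to $\A$ and $\C$, they share the same underlying $\mathfrak{S}$-module, so in particular $\dim \A^{ns}(n)=\dim \C^{ns}(n)$ and we can fix a bijection $B_{\A}(n)\leftrightarrow B_{\C}(n)$.

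Next I would define $\varphi_n$ on the extended basis by $b_{\A}\otimes e_\sigma\mapsto (b_{\C}\otimes e_\sigma)^*$ under the canonical pairing and the fixed bijection. Using the canonical identification $(\C^{ns}(n)\otimes \K[\mathfrak{S}_n])^*\simeq \C^{ns*}(n)\otimes \K[\mathfrak{S}_n]$, the $\mathfrak{S}_n$-action on both source and target reduces to the regular action on the $\K[\mathfrak{S}_n]$ factor, so $\varphi_n$ is $\mathfrak{S}_n$-equivariant by construction.

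Then I would verify the compatibility condition $\Delta\circ\mu^\sigma=\Delta^{\sigma^{-1}}\circ \mu$ of a compatible basis. In the non-symmetric setting, an element $\mu\otimes e_\sigma\in \A(n)$ evaluated on inputs $(p_1,\ldots,p_n)$ coincides with the non-symmetric operation $\mu$ applied to a permutation of the inputs; dually, $\Delta\otimes e_\tau\in \C(n)$ post-composes with a permutation of outputs. In the formula \eqref{crochet} expressing $\Delta\circ\mu(p_1,\ldots,p_n)$, the symmetric-group factor therefore enters only through a single permutation of the tensor slots, which can be transferred from the $\mu$-side to the $\Delta$-side by relabeling $\sigma$ to $\sigma^{-1}$. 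This is precisely the stated identity; the verification is a matter of tracking permutation conventions rather than a substantive calculation. The main obstacle is thus this bookkeeping step. Once compatibility is established, the previous proposition produces the desired confluence law and the rigidity theorem applies.
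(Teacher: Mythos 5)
Your proposal is correct and follows the same route as the paper, which simply asserts (without a written proof) that for non-symmetric operads any basis is compatible and then invokes the preceding proposition; your observation that the arity-$n$ component is the regular module $\A^{ns}(n)\otimes\K[\mathfrak{S}_n]$, whose self-duality makes the dual-basis map $\varphi_n$ equivariant by construction, is exactly the reasoning behind that assertion. The remaining bookkeeping you describe for the identity $\Delta\circ\mu^{\sigma}=\Delta^{\sigma^{-1}}\circ\mu$ is indeed routine once equivariance of $\varphi_n$ is in hand, via the pairing identity $\left\langle \delta^{\sigma},\varphi_n(\mu)\right\rangle=\left\langle \delta,\varphi_n(\mu^{\sigma^{-1}})\right\rangle$ already noted in the paper.
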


\begin{example} \label{2.1.5} Using the duality on usual bases of known operads, we find back the following cases. We represent a product $\divideontimes$ by \begin{tikzpicture}[scale=0.4,thick]
\draw (-1,1)--(0,0)--(1,1);
\draw (0,0)--(0,-1);
\draw[draw=black,fill=white] (0,0) circle(0.5);
\draw (0,0) node{$\divideontimes$};
\end{tikzpicture} and a coproduct $\Delta_{\divideontimes}$ by \begin{tikzpicture}[scale=0.4,thick]
\draw (-1,-1)--(0,0)--(1,-1);
\draw (0,0)--(0,1);
\draw[draw=black,fill=white] (0,0) circle(0.5);
\draw (0,0) node{$\divideontimes$};
\end{tikzpicture}, omitting to precise the product or the coproduct if there is no ambiguity.
\begin{itemize}
\item \cite{Borel} $\A=$Comm, $\C=$Comm with the Hopf mixed distributive law:

\begin{tikzpicture}[scale=0.3,thick]
\draw (0,0)--(0,1)--(1,1.5)--(1,2.5)--(0,3)--(0,4);
\draw (2,0)--(2,1)--(1,1.5)--(1,2.5)--(2,3)--(2,4);
\draw (3,2) node{$=$};
\draw (4,0)--(4,4);
\draw (6,0)--(6,4);
\draw (7,2) node{$+$};
\end{tikzpicture}
\begin{tikzpicture}[scale=0.3,thick]
\draw (4,0)--(4,1.5)--(6,2.5)--(6,4);
\draw (4,4)--(4,2.5)--(6,1.5)--(6,0);
\draw (7,2) node{$+$};
\end{tikzpicture}
\begin{tikzpicture}[scale=0.3,thick]
\draw (8,4)--(8,1.5)--(8.75,1)--(8.75,0);
\draw (8.75,1)--(9.5,1.5)--(9.5,2.5)--(10.25,3)--(10.25,4);
\draw (10.25,3)--(11,2.5)--(11,0);
\draw (11.75,2) node{$+$};
\end{tikzpicture}
\begin{tikzpicture}[scale=0.3,thick]
\draw (21.5,0)--(21.5,1.25)--(22.5,2)--(21.5,2.75)--(21.5,4);
\draw (21.5,1.25)..controls (20.5,2) and (20,2.5) .. (20,3)--(20,4);
\draw (21.5,2.75)..controls (20.5,2) and (20,1.5).. (20,1)--(20,0);
\draw (23.5,2) node{$+$};
\end{tikzpicture}
\begin{tikzpicture}[scale=0.3,thick]
%Y|
\draw (3,4)--(3,1.5)--(2.25,1)--(2.25,0);
\draw (2.25,1)--(1.5,1.5)--(1.5,2.5)--(0.75,3)--(0.75,4);
\draw (0.75,3)--(0,2.5)--(0,0);
\draw (4,2) node{$+$};
\end{tikzpicture}
\begin{tikzpicture}[scale=0.3,thick]
% C gauche
\draw (17,0)--(17,1.25)--(16,2)--(17,2.75)--(17,4);
\draw (17,1.25)..controls (18,2) and (18.5,2.5) .. (18.5,3)--(18.5,4);
\draw (17,2.75)..controls (18,2) and (18.5,1.5).. (18.5,1)--(18.5,0);
\draw (19.5,2) node{$+$};
\end{tikzpicture}
\begin{tikzpicture}[scale=0.3,thick]
% <x>
\draw (1,0)--(1,1.25)--(0,2)--(1,2.75)--(1,4);
\draw (1,2.75)--(3,1.25)--(3,0);
\draw (1,1.25)--(3,2.75)--(3,4);
\draw (3,1.25)--(4,2)--(3,2.75);
\end{tikzpicture}

\item \cite{nui} $\A=$As, $\C=$As with the n.u.i. mixed distributive law:

\begin{tikzpicture}[scale=0.3,thick]
\draw (0,0)--(0,1)--(1,1.5)--(1,2.5)--(0,3)--(0,4);
\draw (2,0)--(2,1)--(1,1.5)--(1,2.5)--(2,3)--(2,4);
\draw (3,2) node{$=$};
    \end{tikzpicture}
\begin{tikzpicture}[scale=0.3,thick]
% ||
\draw (4,0)--(4,4);
\draw (6,0)--(6,4);
\draw (7,2) node{$+$};
\end{tikzpicture}
\begin{tikzpicture}[scale=0.3,thick]
% | Y
\draw (8,4)--(8,1.5)--(8.75,1)--(8.75,0);
\draw (8.75,1)--(9.5,1.5)--(9.5,2.5)--(10.25,3)--(10.25,4);
\draw (10.25,3)--(11,2.5)--(11,0);
\draw (11.75,2) node{$+$};
\end{tikzpicture}
\begin{tikzpicture}[scale=0.3,thick]
%Y|
\draw (3,4)--(3,1.5)--(2.25,1)--(2.25,0);
\draw (2.25,1)--(1.5,1.5)--(1.5,2.5)--(0.75,3)--(0.75,4);
\draw (0.75,3)--(0,2.5)--(0,0);
\end{tikzpicture}

\item \cite{AsZinb} $\A=$As, $\C=$Zinb with the semi-Hopf mixed distributive law, denoted by $\prec$ the generating operation of Zinbiel: 

\begin{tikzpicture}[scale=0.3,thick]
\draw (0,0)--(0,1)--(1,1.5)--(1,2.5)--(0,3)--(0,4);
\draw (2,0)--(2,1)--(1,1.5)--(1,2.5)--(2,3)--(2,4);
\draw (3,2) node{$=$};
% ||
\draw (4,0)--(4,4);
\draw (6,0)--(6,4);
\draw (7,2) node{$+$};
\end{tikzpicture}
\begin{tikzpicture}[scale=0.3,thick]
% | Y
\draw (8,4)--(8,1.5)--(8.75,1)--(8.75,0);
\draw (8.75,1)--(9.5,1.5)--(9.5,2.5)--(10.25,3)--(10.25,4);
\draw (10.25,3)--(11,2.5)--(11,0);
\draw (11.75,2) node{$+$};
\end{tikzpicture}
\begin{tikzpicture}[scale=0.3,thick]
% C gauche
\draw (17,0)--(17,1.25)--(16,2)--(17,2.75)--(17,4);
\draw (17,1.25)..controls (18,2) and (18.5,2.5) .. (18.5,3)--(18.5,4);
\draw (17,2.75)..controls (18,2) and (18.5,1.5).. (18.5,1)--(18.5,0);
\draw (19.5,2) node{$+$};
\end{tikzpicture}
\begin{tikzpicture}[scale=0.3,thick]
%Y|
\draw (3,4)--(3,1.5)--(2.25,1)--(2.25,0);
\draw (2.25,1)--(1.5,1.5)--(1.5,2.5)--(0.75,3)--(0.75,4);
\draw (0.75,3)--(0,2.5)--(0,0);
\draw[fill=white] (2.25,1) circle(0.4);
\draw(2.25,1) node {$*$};
\draw (4,2) node{$+$};
\end{tikzpicture}
\begin{tikzpicture}[scale=0.3,thick]
% <x>
\draw (1,0)--(1,1.25)--(0,2)--(1,2.75)--(1,4);
\draw (1,2.75)--(3,1.25)--(3,0);
\draw (1,1.25)--(3,2.75)--(3,4);
\draw (3,1.25)--(4,2)--(3,2.75);
\draw[fill=white] (3,1.25) circle(0.4);
\draw(3,1.25) node {$*$};
\end{tikzpicture},

with $*=\prec \circ (id + (1 \  2))$.

\item {\cite{MagInf}}  $\A=$Mag, $\C=$Mag with the magmatic mixed distributive law:

\begin{tikzpicture}[scale=0.3,thick]
\draw (0,0)--(0,1)--(1,1.5)--(1,2.5)--(0,3)--(0,4);
\draw (2,0)--(2,1)--(1,1.5)--(1,2.5)--(2,3)--(2,4);
\draw (3,2) node{$=$};
    \end{tikzpicture}
\begin{tikzpicture}[scale=0.3,thick]
% ||
\draw (4,0)--(4,4);
\draw (6,0)--(6,4);
\end{tikzpicture}

\item {\cite{MagInf}}  $\A=$Mag$\infty$, $\C=$Mag$\infty$ with the infinite magmatic mixed distributive law:

\begin{tikzpicture}[scale=0.3,thick]
\draw (0,0)--(0,1)--(1,1.5)--(1,2.5)--(0,3)--(0,4);
\draw (2,0)--(2,1)--(1,1.5)--(1,2.5)--(2,3)--(2,4);
\draw (1,0)--(1,4);
\draw (3,2) node{$=$};
    \end{tikzpicture}
\begin{tikzpicture}[scale=0.3,thick]
% ||
\draw (4,0)--(4,4);
\draw (5,0)--(5,4);
\draw (6,0)--(6,4);
\end{tikzpicture}
\hspace{3cm}
\begin{tikzpicture}[scale=0.3,thick]
\draw (0,0)--(0,1)--(1,1.5)--(1,2.5)--(0,3)--(0,4);
\draw (2,0)--(2,1)--(1,1.5)--(1,2.5)--(2,3)--(2,4);
\draw (1,1.5)--(1,4);
\draw (3,2) node{$=$};
    \end{tikzpicture}
    \begin{tikzpicture}[scale=0.3,thick]
\draw (0,0)--(0,1)--(1,1.5)--(1,2.5)--(0,3)--(0,4);
\draw (2,0)--(2,1)--(1,1.5)--(1,2.5)--(2,3)--(2,4);
\draw (1,0)--(1,2.5);
\draw (3,2) node{$= 0$};
    \end{tikzpicture}

\item {\cite{NAPPL}} $\A=$NAP, $\C=$PreLie with the Livernet mixed distributive law:

\begin{tikzpicture}[scale=0.3,thick]
\draw (0,0)--(0,1)--(1,1.5)--(1,2.5)--(0,3)--(0,4);
\draw (2,0)--(2,1)--(1,1.5)--(1,2.5)--(2,3)--(2,4);
\draw (3,2) node{$=$};
    \end{tikzpicture}
\begin{tikzpicture}[scale=0.3,thick]
% ||
\draw (4,0)--(4,4);
\draw (6,0)--(6,4);
\draw (7,2) node{$+$};
\end{tikzpicture}
\begin{tikzpicture}[scale=0.3,thick]
%Y|
\draw (3,4)--(3,1.5)--(2.25,1)--(2.25,0);
\draw (2.25,1)--(1.5,1.5)--(1.5,2.5)--(0.75,3)--(0.75,4);
\draw (0.75,3)--(0,2.5)--(0,0);
\draw (4,2) node{$+$};
\end{tikzpicture}
\begin{tikzpicture}[scale=0.3,thick]
% C gauche
\draw (17,0)--(17,1.25)--(16,2)--(17,2.75)--(17,4);
\draw (17,1.25)..controls (18,2) and (18.5,2.5) .. (18.5,3)--(18.5,4);
\draw (17,2.75)..controls (18,2) and (18.5,1.5).. (18.5,1)--(18.5,0);
\end{tikzpicture}

\item \cite{GBO} $\A=$Nil, $\C=$Nil with nil mixed distributive law:

\begin{tikzpicture}[scale=0.3,thick]
\draw (0,0)--(0,1)--(1,1.5)--(1,2.5)--(0,3)--(0,4);
\draw (2,0)--(2,1)--(1,1.5)--(1,2.5)--(2,3)--(2,4);
\draw (3,2) node{$=$};
    \end{tikzpicture} 
\begin{tikzpicture}[scale=0.3,thick]
% ||
\draw (4,0)--(4,4);
\draw (6,0)--(6,4);
\draw (7,2) node{$-$};
\end{tikzpicture}
\begin{tikzpicture}[scale=0.3,thick]
%D|
\draw (0.75,0)--(0.75,1.5)--(0,2)--(0.75,2.5)--(0.75,4);
\draw (0.75,1.5)--(1.5,2)--(0.75,2.5);
\draw (2.25,0)--(2.25,4);
\draw (3.25,2) node{$-$};
\end{tikzpicture}
\begin{tikzpicture}[scale=0.3,thick]
%|D
\draw (0.75,0)--(0.75,1.5)--(0,2)--(0.75,2.5)--(0.75,4);
\draw (0.75,1.5)--(1.5,2)--(0.75,2.5);
\draw (-0.75,0)--(-0.75,4);
\draw (2.5,2) node{$+$};
\end{tikzpicture}
\begin{tikzpicture}[scale=0.3,thick]
%DD
\draw (0.75,0)--(0.75,1.5)--(0,2)--(0.75,2.5)--(0.75,4);
\draw (0.75,1.5)--(1.5,2)--(0.75,2.5);
\draw (2.75,0)--(2.75,1.5)--(2,2)--(2.75,2.5)--(2.75,4);
\draw (2.75,1.5)--(3.5,2)--(2.75,2.5);
\end{tikzpicture}
\item \cite{GBO} $\A=$ Dup, $\C=$Dup with the following mixed distributive law:

\begin{tikzpicture}[scale=0.3,thick]
\draw (0,0)--(0,1)--(1,1.5)--(1,3.5)--(0,4)--(0,5);
\draw (2,0)--(2,1)--(1,1.5)--(1,3.5)--(2,4)--(2,5);
\draw[fill=white] (1,1.5) circle(0.6);
\draw[fill=white] (1,3.5) circle(0.6);
\draw (1,1.5) node {$\succ$};
\draw (1,3.5) node {$\succ$};
\draw (3,2) node{$=$};
% ||
\draw (4,0)--(4,5);
\draw (6,0)--(6,5);
\draw (7,2) node{$+$};
\end{tikzpicture}
\begin{tikzpicture}[scale=0.3,thick]
% | Y
\draw (8,4.5)--(8,1.5)--(8.75,1)--(8.75,-0.5);
\draw (8.75,1)--(9.5,1.5)--(9.5,2.5)--(10.25,3)--(10.25,4.5);
\draw (10.25,3)--(11,2.5)--(11,-0.5);
\draw[fill=white] (8.75,1) circle(0.6);
\draw[fill=white] (10.25,3) circle(0.6);
\draw (8.75,1) node {$\succ$};
\draw (10.25,3) node {$\succ$};
\draw (11.75,2.5) node{$+$};
\end{tikzpicture}
\begin{tikzpicture}[scale=0.3,thick]
%Y|
\draw (3,4.5)--(3,1.5)--(2.25,1)--(2.25,-0.5);
\draw (2.25,1)--(1.5,1.5)--(1.5,2.5)--(0.75,3)--(0.75,4.5);
\draw (0.75,3)--(0,2.5)--(0,-0.5);
\draw[fill=white] (2.25,1) circle(0.6);
\draw[fill=white] (0.75,3) circle(0.6);
\draw (2.25,1) node {$\succ$};
\draw (0.75,3) node {$\succ$};
\end{tikzpicture}

\begin{tikzpicture}[scale=0.3,thick]
\draw (0,0)--(0,1)--(1,1.5)--(1,3.5)--(0,4)--(0,5);
\draw (2,0)--(2,1)--(1,1.5)--(1,3.5)--(2,4)--(2,5);
\draw[fill=white] (1,1.5) circle(0.6);
\draw[fill=white] (1,3.5) circle(0.6);
\draw (1,1.5) node {$\prec$};
\draw (1,3.5) node {$\succ$};
\draw (3,2) node{$=$};
\end{tikzpicture}
\begin{tikzpicture}[scale=0.3,thick]
% | Y
\draw (8,4.5)--(8,1.5)--(8.75,1)--(8.75,-0.5);
\draw (8.75,1)--(9.5,1.5)--(9.5,2.5)--(10.25,3)--(10.25,4.5);
\draw (10.25,3)--(11,2.5)--(11,-0.5);
\draw[fill=white] (8.75,1) circle(0.6);
\draw[fill=white] (10.25,3) circle(0.6);
\draw (8.75,1) node {$\succ$};
\draw (10.25,3) node {$\prec$};
\end{tikzpicture}
\hspace{2cm}
\begin{tikzpicture}[scale=0.3,thick]
\draw (0,0)--(0,1)--(1,1.5)--(1,3.5)--(0,4)--(0,5);
\draw (2,0)--(2,1)--(1,1.5)--(1,3.5)--(2,4)--(2,5);
\draw[fill=white] (1,1.5) circle(0.6);
\draw[fill=white] (1,3.5) circle(0.6);
\draw (1,1.5) node {$\succ$};
\draw (1,3.5) node {$\prec$};
\draw (3,2) node{$=$};
\end{tikzpicture}
\begin{tikzpicture}[scale=0.3,thick]
%Y|
\draw (3,4.5)--(3,1.5)--(2.25,1)--(2.25,-0.5);
\draw (2.25,1)--(1.5,1.5)--(1.5,2.5)--(0.75,3)--(0.75,4.5);
\draw (0.75,3)--(0,2.5)--(0,-0.5);
\draw[fill=white] (2.25,1) circle(0.6);
\draw[fill=white] (0.75,3) circle(0.6);
\draw (2.25,1) node {$\succ$};
\draw (0.75,3) node {$\prec$};
\end{tikzpicture}

\begin{tikzpicture}[scale=0.3,thick]
\draw (0,0)--(0,1)--(1,1.5)--(1,3.5)--(0,4)--(0,5);
\draw (2,0)--(2,1)--(1,1.5)--(1,3.5)--(2,4)--(2,5);
\draw[fill=white] (1,1.5) circle(0.6);
\draw[fill=white] (1,3.5) circle(0.6);
\draw (1,1.5) node {$\prec$};
\draw (1,3.5) node {$\prec$};
\draw (3,2) node{$=$};
% ||
\draw (4,0)--(4,5);
\draw (6,0)--(6,5);
\draw (7,2) node{$+$};
\end{tikzpicture}
\begin{tikzpicture}[scale=0.3,thick]
% | Y
\draw (8,4.5)--(8,1.5)--(8.75,1)--(8.75,-0.5);
\draw (8.75,1)--(9.5,1.5)--(9.5,2.5)--(10.25,3)--(10.25,4.5);
\draw (10.25,3)--(11,2.5)--(11,-0.5);
\draw[fill=white] (8.75,1) circle(0.6);
\draw[fill=white] (10.25,3) circle(0.6);
\draw (8.75,1) node {$\prec$};
\draw (10.25,3) node {$\prec$};
\draw (11.75,2.5) node{$+$};
\end{tikzpicture}
\begin{tikzpicture}[scale=0.3,thick]
%Y|
\draw (3,4.5)--(3,1.5)--(2.25,1)--(2.25,-0.5);
\draw (2.25,1)--(1.5,1.5)--(1.5,2.5)--(0.75,3)--(0.75,4.5);
\draw (0.75,3)--(0,2.5)--(0,-0.5);
\draw[fill=white] (2.25,1) circle(0.6);
\draw[fill=white] (0.75,3) circle(0.6);
\draw (2.25,1) node {$\prec$};
\draw (0.75,3) node {$\prec$};
\end{tikzpicture}

\end{itemize}
Note that if $(\C^c-_{\alpha}\A)$-bialgebras satisfy the rigidity theorem with a given confluence law $(\alpha)$, so does $(\A^c-_{\bar{\alpha}}\C)$-bialgebras with the confluence law $\bar{\alpha}$ obtained as the dual of $\alpha$, or, graphically, as the horizontal mirror image of $\alpha$.
\end{example}

\begin{remark}
Some cases are not obtained by duality of the same product, for instance in the dendriform case, the isomorphism given by Foissy's mixed distributive law (\cite{Foissy})  in arity two (between $\operatorname{Dend}(2)$ and $\operatorname{Dend}(2)^*$) is given by: 
\begin{align*}
 \varphi_2: & 1\succ2 \mapsto (1\succ2)^* \\
  & 1 \prec 2 \mapsto (2 \prec 1)^*.
\end{align*}
The matrix of $\varphi_2$ is diagonalisable but admits $-1$ as an eigenvalue. Indeed, Foissy uses two different dendriform products defined on dendriform algebras and computes the mixed distributive laws between one and the dual of the other.

In the 2-as case computed by Loday et Ronco in \cite{nui}, the isomorphism given by Hopf and n.u.i. mixed distributive laws in arity two  is diagonalisable but admits $-1$ and $3$ as an eigenvalue. It would be interesting to know if there exists two different 2-as products such that when looking at the mixed distributive laws between one and the dual of the other, one recovers Loday and Ronco's mixed distributive laws. 
\end{remark}

\subsection{PreLie Case.} \label{PLcase}

We consider here the PreLie operad and the rooted tree basis introduced by Chapoton and Livernet in \cite{ChapLiv}: the free PreLie algebra on a vector space $V$ is spanned by rooted (non planar) trees with vertices indexed by $V$.  This example was the motivation for the introduction of confluence laws.

We recall that the relation satisfied by a PreLie product $\curvearrowleft$ is given by: 
\begin{equation*}
(x \curvearrowleft y) \curvearrowleft z - x \curvearrowleft (y \curvearrowleft z)=
(x \curvearrowleft z) \curvearrowleft y - x \curvearrowleft (z \curvearrowleft y)
\end{equation*}

Combinatorially, the product $T \curvearrowleft S$ is the sum over all possible ways to add an edge between a vertex of $T$ and the root of $S$. The root of the obtained tree is the root of $T$.
 
The dual coproduct is then given by the sum over all possible ways to delete an edge in the tree:
\begin{equation*}
\Delta(T)=\sum_{a \in E(T)} R_a(T)  \otimes L_a(T),
\end{equation*}
where $R_a(T)$ is the connected component of $T-\{a\}$ containing the root of $T$ and $L_a(T)$ is the other connected component.

\begin{remark}
This coproduct is obtained by taking only connected components in Connes-Kreimer coproduct.
\end{remark}

To apply the rigidity theorem to some algebras, we compute the associated confluence law:

\begin{proposition} \label{RelCompPL}
The PreLie product and its dual coproduct satisfy the following confluence law, for $T \in \operatorname{PreLie}(n)$ and $S \in \operatorname{PreLie}(k)$, $p_1, \ldots, p_{n+k}$ some primitive elements in a given $\operatorname{PreLie}^c-\operatorname{PreLie}$-bialgebra:
\begin{align*} 
\Delta & \left(T \curvearrowleft  S  \right) (p_1, \ldots, p_{n+k}) = n \times T(p_1, \ldots, p_n) \otimes S(p_{n+1}, \ldots, p_{n+k}) \\
& + (T \curvearrowleft S_1) \otimes S_2 (p_1, \ldots, p_{n+k}) + (T_1 \curvearrowleft S) \otimes T_2 (p_1, \ldots, p_{n+k}) \\
&+ T_1 \otimes (T_2 \curvearrowleft S) (p_1, \ldots, p_{n+k}),
\end{align*}
where $\Delta(T) = T_1 \otimes T_2$ and $\Delta(S) = S_1 \otimes S_2$.
\begin{figure}[h!]
\begin{tikzpicture}[scale=0.3,thick]
\draw (0,0)--(0,1)--(1,1.5)--(1,2.5)--(0,3)--(0,4);
\draw (2,0)--(2,1)--(1,1.5)--(1,2.5)--(2,3)--(2,4);
\draw (3,2) node{$=$};
% ||
\draw (4,5) node{$\#$};
\draw (4,0)--(4,4);
\draw (6,0)--(6,4);
\draw (7,2) node{$+$};
\end{tikzpicture}
\begin{tikzpicture}[scale=0.3,thick]
% | Y
\draw (8,4)--(8,1.5)--(8.75,1)--(8.75,0);
\draw (8.75,1)--(9.5,1.5)--(9.5,2.5)--(10.25,3)--(10.25,4);
\draw (10.25,3)--(11,2.5)--(11,0);
\draw (11.75,2) node{$+$};
\end{tikzpicture}
\begin{tikzpicture}[scale=0.3,thick]
%Y|
\draw (3,4)--(3,1.5)--(2.25,1)--(2.25,0);
\draw (2.25,1)--(1.5,1.5)--(1.5,2.5)--(0.75,3)--(0.75,4);
\draw (0.75,3)--(0,2.5)--(0,0);
\draw (4,2) node{$+$};
\end{tikzpicture}
\begin{tikzpicture}[scale=0.3,thick]
% C gauche
\draw (17,0)--(17,1.25)--(16,2)--(17,2.75)--(17,4);
\draw (17,1.25)..controls (18,2) and (18.5,2.5) .. (18.5,3)--(18.5,4);
\draw (17,2.75)..controls (18,2) and (18.5,1.5).. (18.5,1)--(18.5,0);
\end{tikzpicture}
\end{figure}

The previous definition of the confluence law is easier to apply. We also state another definition, closer to the formal definition of confluence law but strictly equivalent to the previous one, for $T \in \operatorname{PreLie}(n)$ and $S \in \operatorname{PreLie}(k)$:

\begin{equation*}
\Delta_S(T(p_1, \ldots, p_n))=\sum_{\substack{S_1, \ldots, S_k \\ S_i \in \operatorname{PreLie}(l_i)\\ l_1 + \ldots + l_k = n}} S_1(p_{\sigma(1)}, \ldots, p_{\sigma(l_1)}) \otimes \ldots \otimes S_k(p_{\sigma(n-l_k+1)}, \ldots, p_{\sigma(n)}),
\end{equation*}
where the sum is taken over products $S_1, \ldots, S_k$ satisfying $\mu(S,S_1, \ldots, S_k) (p_1, \ldots, p_n) \ni T(p_1, \ldots, p_n)$, where $\mu$ is the operad composition of PreLie and only one element S is taken in the orbit by the action of the symmetric group (for instance, only one representative of $(p_1 \curvearrowleft p_2) \curvearrowleft p_3 - p_1 (\curvearrowleft p_2 \curvearrowleft p_3 )= (p_1 \curvearrowleft p_3) \curvearrowleft p_2 - p_1 (\curvearrowleft p_3 \curvearrowleft p_2 ) $ is chosen.)

\end{proposition}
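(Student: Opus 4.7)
The plan is to unwind both sides using the combinatorial descriptions of the product and coproduct on rooted trees. Recall from \cite{ChapLiv} that $T \curvearrowleft S = \sum_{v \in V(T)} T *_v S$, where $T *_v S$ denotes the tree obtained by grafting the root of $S$ onto the vertex $v$ of $T$ via a new edge $e_v$. The dual coproduct is $\Delta(U) = \sum_{e \in E(U)} R_e(U) \otimes L_e(U)$, where $R_e(U)$ is the connected component containing the root of $U$ after cutting $e$. Composing these two descriptions rewrites $\Delta(T \curvearrowleft S)$ as a double sum indexed by $v \in V(T)$ and by edges $e \in E(T *_v S)$.

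The key observation is that the edges of $T *_v S$ partition into three disjoint classes: the edges inherited from $T$, the edges inherited from $S$, and the single grafting edge $e_v$. I would process these classes one at a time. First, cutting the grafting edge $e_v$ returns $T \otimes S$, and summing over the $n = |V(T)|$ possible choices of $v$ produces the leading term $n \cdot T \otimes S$. Second, cutting an edge $e \in E(S)$ disconnects the subtree $L_e(S)$, which is disjoint from $T$, from the rest, yielding $(T *_v R_e(S)) \otimes L_e(S)$; summing over $v$ first reassembles the product $T \curvearrowleft R_e(S)$, giving the contribution $(T \curvearrowleft S_1) \otimes S_2$. Third, cutting an edge $e \in E(T)$ splits $T *_v S$ in the same way as cutting $e$ in $T$, with $S$ attached on the side containing $v$; partitioning the sum over $v$ according to whether $v \in V(R_e(T))$ or $v \in V(L_e(T))$ produces exactly $(T_1 \curvearrowleft S) \otimes T_2$ and $T_1 \otimes (T_2 \curvearrowleft S)$.

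Collecting the three classes gives the identity claimed in the statement. Evaluation on a tuple of primitive elements $p_1, \ldots, p_{n+k}$ is then automatic, since by Definition \ref{defBialg} and Lemma \ref{phiLn} the confluence law is determined by its values on primitives (all internal coproducts on primitives vanish). The equivalence with the alternative formulation $\Delta_S(T(p_1, \ldots, p_n))$ follows by rewriting the edge-cut decomposition in terms of operadic composition: each tuple $(S_1, \ldots, S_k)$ appearing there corresponds bijectively, up to the $\mathfrak{S}_n$-orbit chosen for normalisation, to the combinatorial data produced by picking $k-1$ edges to cut in $T$ and reading off the resulting subtrees.

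The main obstacle is the bookkeeping in the third case, where one must verify that removing an edge of $T$ inside the larger tree $T *_v S$ yields the same root-versus-leaf partition as removing it in $T$ itself, and that the grafted copy of $S$ stays on the side containing $v$. This is a small but genuine combinatorial check about the compatibility of the grafting operation with edge cuts; once it is settled, the proof amounts to collecting terms.
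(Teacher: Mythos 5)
Your proposal is correct and follows essentially the same route as the paper's own (much terser) proof: a case analysis on the deleted edge of the grafted tree $T *_v S$, distinguishing the grafting edge, the edges of $S$, and the edges of $T$ (the latter split according to which side of the cut the grafted copy of $S$ lands on), which yields the four terms with the coefficient $n$ coming from the $n$ choices of grafting vertex. Your treatment of the second formulation via operadic composition likewise matches the paper's one-line appeal to the definition of the dual coproduct, so there is nothing substantive to add.
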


\begin{proof}
For the first expression, the decomposition is done following the deleted edge: it can be between $S$ and $T$, in $S$, or in $T$. If it is in $T$, it can be between the root of $T$ and the root of $S$ or somewhere else.

The second expression follows quite easily from the definition of the dual coproduct. 
\end{proof}

It has been pointed to the authors that the sketch of this confluence law can also be found in  \cite{LMB}.

\begin{example}
Let us give explicitely the second formulation of confluence laws on some example. For $T= \begin{tikzpicture}[scale=0.5]
\draw(1,1)--(0,0)--(-2,2);
\draw(-1,1)--(0,2);
\draw[fill=white](0,0) circle(0.4);
\draw[fill=white](1,1) circle(0.4);
\draw[fill=white](-1,1) circle(0.4);
\draw[fill=white](0,2) circle(0.4);
\draw[fill=white](-2,2) circle(0.4);
\draw(0,0) node{4};
\draw(1,1) node{5};
\draw(-1,1) node{2};
\draw(0,2) node{3};
\draw(-2,2) node{1};
\end{tikzpicture}$ and $S=\begin{tikzpicture}[scale=0.5]
\draw(1,1)--(0,0)--(-1,1);
\draw[fill=white](0,0) circle(0.4);
\draw[fill=white](1,1) circle(0.4);
\draw[fill=white](-1,1) circle(0.4);
\draw(0,0) node{2};
\draw(1,1) node{3};
\draw(-1,1) node{1};
\end{tikzpicture}$, we get: 
\begin{align*}
\Delta_S(T)&(p_1, \ldots, p_5) = \As[p_3] \otimes \Av[p_4][p_2][p_5] \otimes \As[p_1] + \As[p_1] \otimes \Av[p_4][p_2][p_5] \otimes \As[p_3]   \\
& + \As[p_5] \otimes \Al[p_4][p_2][p_3] \otimes \As[p_1] +\As[p_1] \otimes \Al[p_4][p_2][p_3] \otimes \As[p_5] + \As[p_5] \otimes \Al[p_4][p_2][p_1] \otimes \As[p_3]   \\
& + \As[p_3] \otimes \Al[p_4][p_2][p_1] \otimes \As[p_5] + \As[p_5] \otimes \As[p_4] \otimes \Av[p_2][p_1][p_3] + \Av[p_2][p_1][p_3] \otimes \As[p_4] \otimes \As[p_5].
\end{align*}
Note that there are two terms per way to choose one edge in a left subtree and one edge in the right subtree of a node due to symmetries of $S$.

\end{example}

\begin{proposition}
Applying the algorithm, the idempotent is given by:
\begin{equation*}
e=\operatorname{id} + \sum_{n \geq 2}\sum_{\substack{T \in HT_n \\ \operatorname{Linext}(S) \cap \operatorname{Linext}(T)\neq\emptyset}} \frac{(-1)^{n-1}}{|\operatorname{Aut}(T)|!} S \circ T^*,
\end{equation*}
where we see a rooted tree $T$ as the Hasse diagram of a poset $P(T)$ with a unique minimal element (the root) and $\operatorname{Linext(T)}$ is the set of linear extension of the poset $P(T)$.
\end{proposition}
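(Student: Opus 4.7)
The plan is to proceed by induction on the cofiltration level, starting from the explicit iterative construction of the idempotent given at the end of Subsection \ref{orthbas}. In the PreLie case, with the rooted tree basis of Chapoton--Livernet, this inductive formula reads
\begin{equation*}
e_n = e_{n-1} - \sum_{T \in HT_n} \frac{1}{|\operatorname{Aut}(T)|}\, e_{n-1} \circ T \circ T^{*}
\end{equation*}
on $\F_n\Hc$, with $e_1 = \operatorname{id}$ on $\F_1\Hc$. The goal is to unfold this recursion and collect the surviving terms into a closed form.

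First, I would fully unroll the recursion. Substituting $e_{n-1}$ into the formula for $e_n$ and iterating down to $e_1$ produces an alternating expansion indexed by chains of rooted trees with strictly increasing sizes, each term carrying a sign equal to $-1$ raised to the length of the chain and a weight equal to the product of the inverses of the automorphism orders. Second, I would evaluate the inner iterated compositions $T_k \circ T_k^{*} \circ \cdots \circ T_1 \circ T_1^{*}$ on primitive inputs, using the confluence law of Proposition \ref{RelCompPL}. The law rewrites each coproduct $T^{*}$ as a sum of tensor products of subtrees obtained by deleting one edge, and the reassembly by the subsequent product compresses the entire composition into a single expression of the shape $S \circ T^{*}$ with $S, T \in HT_n$. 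A direct inspection shows that such a composition is nonzero on a generic tuple of primitives exactly when the poset $P(T)$ (encoding how the input primitives are distributed by $T^{*}$) and the poset $P(S)$ (encoding how they must be reassembled by $S$) admit a common linear extension, which gives the indexing condition $\operatorname{Linext}(S)\cap\operatorname{Linext}(T)\neq\emptyset$.

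Third, I would collect the surviving terms for each pair $(S, T)$. Many distinct chains in the unrolling produce the same final composition $S \circ T^{*}$; a combinatorial count establishes a bijection between these chains and ordered sequences of automorphisms of $T$ compatible with the strictly increasing filtration by arity, which yields the weight $1/|\operatorname{Aut}(T)|!$. The overall sign $(-1)^{n-1}$ reflects that only maximal chains of length $n-1$ in the expansion contribute nontrivially, the shorter chains being annihilated either by conilpotency on $\F_n\Hc$ or by Lemma \ref{De}.

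The main obstacle is the combinatorial bookkeeping in the third step: identifying exactly which chains in the recursion yield a given pair $(S, T)$ after the confluence law is applied, and proving that their total weighted contribution equals $(-1)^{n-1}/|\operatorname{Aut}(T)|!$. This relies on the second, operadic formulation of the confluence law in Proposition \ref{RelCompPL}, which permits one to track at each step precisely how the $\mathfrak{S}_n$-actions on subtrees interact with the iterative edge-insertions building $T$, and on Lemma \ref{De} to guarantee that the non-contributing terms vanish on the cofiltration.
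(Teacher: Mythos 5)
Your overall strategy---unrolling the recursion $e_n=e_{n-1}-\sum_{T\in HT_n}\tfrac{1}{|\operatorname{Aut}(T)|}\,e_{n-1}\circ T\circ T^*$ from Subsection \ref{orthbas} and resumming---is a legitimate route in principle, but it is not the paper's, and as sketched it contains a step that fails. You claim that the sign $(-1)^{n-1}$ arises because ``only maximal chains of length $n-1$ contribute nontrivially, the shorter chains being annihilated either by conilpotency on $\F_n\Hc$ or by Lemma \ref{De}.'' This cannot be right: writing $P_m=\sum_{T\in HT_m}\tfrac{1}{|\operatorname{Aut}(T)|}T\circ T^*$, the unrolled operator on $\F_N$ is $(\operatorname{id}-P_2)\circ\cdots\circ(\operatorname{id}-P_N)$, and the closed formula you are trying to reach contains, on $\F_N$, contributions from \emph{every} arity $2\le n\le N$ (each weighted by $(-1)^{n-1}$ attached to the arity of $T$, not to the length of a chain). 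For instance on $\F_3$ the length-one chain $-P_2$ visibly survives and must be recombined with $-P_3+P_2\circ P_3$ to produce the stated terms; nothing in conilpotency kills it, and Lemma \ref{De} asserts $\Delta\circ e=0$, which says nothing about individual terms of the expansion vanishing. The genuinely hard part---showing that the chains collapsing onto a fixed pair $(S,T)$ contribute exactly $(-1)^{n-1}/|\operatorname{Aut}(T)|!$---is precisely what you defer to ``a combinatorial count,'' so the proposal as written does not establish the statement.

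For comparison, the paper avoids the unrolling entirely by exploiting uniqueness of the idempotent (Corollary \ref{unique}): since $e$ is characterized as the projection onto $\F_1$ parallel to the spaces $M_n$ of Lemma \ref{decomp}, it suffices to check that the \emph{given closed formula} is the identity on primitives and vanishes on every nontrivial tree of primitives. The latter is a short inclusion--exclusion: for $S\in HT_n$, the coefficient of a tree $T$ differing from $S$ in $k$ edges is proportional to $\sum_{p\ge k}\binom{n-1-k}{p-k}(-1)^p=0$. If you want to salvage your derivation-from-the-algorithm approach, you would need to carry out the chain-resummation honestly; otherwise the verification route is both shorter and already licensed by the uniqueness result you have available.
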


\begin{proof}
We show that this idempotent vanishes on any non trivial rooted tree by an inclusion-exclusion principle. For any $S \in HT_n$, the coefficient in front of a tree $T \in HT_n$ in $e(S)$, with $T$ having $k$ edges different from the ones in $S$, is obtained from coproducts deleting these $k$ edges and $p-k$ others  by:
\begin{equation*}
c \times \sum_{p \geq k} \binom{n-1-k}{p-k} (-1)^p=0.
\end{equation*}

Hence the result.
\end{proof}

We now apply rigidity theorem for PreLie algebras to three examples in the literature.

\begin{example}[Box trees] \label{Box trees}
We use the obtained criterion to give a new proof of the freeness of the algebra of partitioned trees introduced in \cite{FPL}. Partitioned trees are equivalent to box trees introduced in \cite{mar2}.

Let us consider a quadruple $\left(L,V,R,E\right)$, where
\begin{itemize}
\item $L$ is a finite set called the set of \emph{labels},
\item $V$ is a partition of $L$ called the set of \emph{vertices},
\item $R$ is an element of $V$ called the root,
\item $E$ is a map from $V-\{R\}$ to $L$ called the set of \emph{edges}. 
\end{itemize} 
We will denote by $\tilde{E}$, the map from $V-\{R\}$ to $V$ which associates to a vertex $v$ the vertex $v'$ containing the label $E\left(v\right)$. The pair $\left(V,\tilde{E}\right)$ is then an oriented graph, with vertices labelled by subsets of $L$.

\begin{definition}[\cite{FPL},\cite{mar2}]
A quadruple $\left(L,V,R,E\right)$ is a \emph{box tree} if and only if the graph $\left(V,\tilde{E}\right)$ is a tree, rooted in $R$, with edges oriented toward the root.

A label $l$ is called \emph{parent} of a vertex $v$ if $E\left(v\right)=l$.
\end{definition}	

In Figure \ref{exemple box tree}, an example of box trees is presented. The root is the double rectangle.

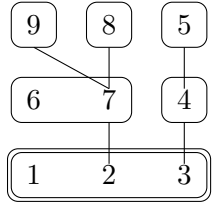
\begin{figure} 
\centering 
\begin{tikzpicture}[scale=1]
\draw[black, rounded corners] (-0.35,-0.35) rectangle(2.35,0.35);
\draw[black, rounded corners] (-0.3,-0.3) rectangle(2.3,0.3);
\draw[black, rounded corners] (-0.3,0.7) rectangle(1.3,1.3);
\draw[black, rounded corners] (1.7,0.7) rectangle(2.3,1.3);
\draw[black, rounded corners] (-0.3,1.7) rectangle(0.3,2.3);
\draw[black, rounded corners] (0.7,1.7) rectangle(1.3,2.3);
\draw[black, rounded corners] (1.7,1.7) rectangle(2.3,2.3);
\draw (1,0.7) -- (1,0.15);
\draw (2,0.7) -- (2,0.15);
\draw (1,1.7) -- (1,1.15);
\draw (0,1.7) -- (1,1.15);
\draw (2,1.7) -- (2,1.15);
\draw (0,0) node{$1$};
\draw (1,0) node{$2$};
\draw (2,0) node{$3$};
\draw (2,1) node{$4$};
\draw (2,2) node{$5$};
\draw (0,1) node{$6$};
\draw (1,1) node{$7$};
\draw (1,2) node{$8$};
\draw (0,2) node{$9$};
\end{tikzpicture}
\caption{\label{exemple box tree} A box tree. }
\end{figure}

The product is given by the natural PreLie product on trees.

 On this algebra, we define the following coproduct:
\begin{equation*}
\Delta(x)=\sum_{\substack{e \in E(x),\\ e: a \rightarrow b}} \frac{1}{|a|} R(x-\{e\}) \otimes L(x-\{e\}).
\end{equation*}
This coproduct satisfies the previous confluence law. Hence the associated algebra is PreLie free, with primitive given by trees with no edges.
\end{example}

\begin{example}[Hypertrees] \label{hypertree}
 The bijection between decorated hypertrees and a pair given by some type of box trees and decorated sets motivated the introduction of the following product on hypertrees introduced by Berge in \cite{Berge} and studied by the second author in \cite{mar1}, \cite{mar2} and \cite{mar3}:

\begin{definition}[\cite{Berge}]
		 A \emph{hypergraph (on a set $V$)} is an ordered pair $(V,E)$ where $V$ is a finite set and $E$ is a collection of elements of cardinality at least two, belonging to the power set $\mathcal{P}(V)$. The elements of $V$ are called \emph{vertices} and those of $E$ are called \emph{edges}.
		 \end{definition}
		 
		 An example of hypergraph is presented in figure~\ref{hypfig}.

		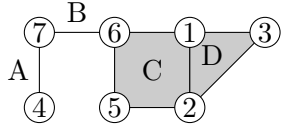
\begin{figure} 
		\centering
\begin{tikzpicture}[scale=1]
\draw (0,0) -- (0,1) node[midway, left]{A};
\draw (0,1) -- (1,1) node[midway, above]{B};
\draw[black, fill=gray!40] (1,1) -- (1,0) -- (2,0) -- (2,1) -- (1,1);
\draw(1.5,0.5) node{C};
\draw[black, fill=gray!40] (2,0) -- (2,1)--(3,1) -- (2,0);
\draw(2.30,0.70) node{D};
\draw[black, fill=white] (0,0) circle (0.2);
\draw[black, fill=white] (0,1) circle (0.2);
\draw[black, fill=white] (1,1) circle (0.2);
\draw[black, fill=white] (1,0) circle (0.2);
\draw[black, fill=white] (2,0) circle (0.2);
\draw[black, fill=white] (2,1) circle (0.2);
\draw[black, fill=white] (3,1) circle (0.2);
\draw(0,0) node{$4$};
\draw(0,1) node{$7$};
\draw(1,1) node{$6$};
\draw(1,0) node{$5$};
\draw(2,1) node{$1$};
\draw(2,0) node{$2$};
\draw(3,1) node{$3$};
\end{tikzpicture}
\caption{An example of hypergraph on $\{1,2,3,4,5,6,7\}$.\label{hypfig}}
	
		\end{figure}

		\begin{definition} Let $H=(V,E)$ be a hypergraph.
		
		 A \emph{walk from a vertex or an edge $d$ to a vertex or an edge $f$ in $H$} is an alternating sequence of vertices and edges beginning by $d$ and ending by $f$ $(d, \ldots, e_i, v_i, e_{i+1}, \ldots, f)$ where for all $i$, $v_i \in V$, $e_i \in E$ and $\{v_i,v_{i+1}\} \subseteq e_i$. The \textit{length} of a walk is the number of edges and vertices in the walk.
		\end{definition}

\begin{example} In the previous example, there are several walks from $4$ to $2$: $(4,A,7,B,6,C,2)$ and $(4,A,7,B,6,C,1,D,3,D,2)$. A walk from $C$ to $3$ is $(C,1,D,3)$.
\end{example}

\begin{definition} A \emph{hypertree} is a non-empty hypergraph $H$ such that, given any distinct vertices $v$ and $w$ in $H$, 
\begin{itemize}
\item there exists a walk from $v$ to $w$ in $H$ with distinct edges $e_i$, i.e. $H$ is \emph{connected},
\item and this walk is unique, i.e. $H$ has \emph{no cycles}. 
 \end{itemize}

The pair $H=(V,E)$ is called \emph{hypertree on $V$}. If $V$ is the set $ \{ 1, \ldots, n \}$, then $H$ is called an \emph{hypertree on $n$ vertices}.
\end{definition}

An example of an hypertree is presented in figure \ref{htfig}.

\begin{figure}

\begin{center}
\begin{tikzpicture}[scale=1]
\draw[black, fill=gray!40] (0,0) -- (1,1) -- (0,1) -- (0,0);
\draw(1,0) -- (1,1);
\draw[black, fill=white] (0,0) circle (0.2);
\draw[black, fill=white] (0,1) circle (0.2);
\draw[black, fill=white] (1,1) circle (0.2);
\draw[black, fill=white] (1,0) circle (0.2);
\draw(0,0) node{$4$};
\draw(0,1) node{$1$};
\draw(1,1) node{$2$};
\draw(1,0) node{$3$};
\end{tikzpicture}
\caption{An example of hypertree on $\{1,2,3,4\}$.\label{htfig} }
\end{center}

\end{figure}
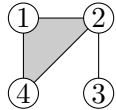

Let us consider a rooted hypertree $H$, i.e. a hypertree with a distinguished vertex. Given an edge $e$ of $H$, there is one vertex of $e$ which is the nearest from the root of $H$ in $e$: let us call it the \emph{petiole} of $e$. 

We define a PreLie product on rooted hypertrees $H \curvearrowleft G$ as the sum of all the ways to graft the root of $G$ on a vertex $v$ of $H$, where the grafting is given by adding an edge between $v$ and $r$.

\begin{example} We represent below the product of two rooted hypertrees:
\begin{center}
\begin{tikzpicture}[scale=1]
\draw[black, fill=gray!40] (0,0) -- (1,1) -- (0,1) -- (0,0);
\draw(1,0) -- (1,1);
\draw[black, fill=white] (0,0) circle (0.3);
\draw[black, fill=white] (0,1) circle (0.2);
\draw[black, fill=white] (1,1) circle (0.2);
\draw[black, fill=white] (1,0) circle (0.2);
\draw[black, fill=white] (0,0) circle (0.2);
\draw(0,0) node{$4$};
\draw(0,1) node{$1$};
\draw(1,1) node{$2$};
\draw(1,0) node{$3$};
\end{tikzpicture}
$\curvearrowleft$
\begin{tikzpicture}[scale=1]
\draw[black] (0,1)--(0,0) -- (1,1) ;
\draw[black, fill=white] (0,0) circle (0.3);
\draw[black, fill=white] (0,0) circle (0.2);
\draw[black, fill=white] (1,1) circle (0.2);
\draw[black, fill=white] (0,1) circle (0.2);
\draw(0,0) node{$6$};
\draw(1,1) node{$7$};
\draw(0,1) node{$5$};
\end{tikzpicture}
$=$
\begin{tikzpicture}[scale=1]
\draw (0,0)--(-1,1);

\draw[black, fill=gray!40] (0,0) -- (1,1) -- (0,1) -- (0,0);
\draw(1,0) -- (1,1);
\draw[black, fill=white] (0,0) circle (0.3);
\draw[black, fill=white] (0,1) circle (0.2);
\draw[black, fill=white] (1,1) circle (0.2);
\draw[black, fill=white] (1,0) circle (0.2);
\draw[black, fill=white] (0,0) circle (0.2);
\draw(0,0) node{$4$};
\draw(0,1) node{$1$};
\draw(1,1) node{$2$};
\draw(1,0) node{$3$};

\draw[black] (-1,2)--(-1,1) -- (0,2) ;
\draw[black, fill=white] (-1,1) circle (0.2);
\draw[black, fill=white] (0,2) circle (0.2);
\draw[black, fill=white] (-1,2) circle (0.2);
\draw(-1,1) node{$6$};
\draw(0,2) node{$7$};
\draw(-1,2) node{$5$};
\end{tikzpicture}
$+$
\begin{tikzpicture}[scale=1]
\draw (0,1)--(0,2);

\draw[black, fill=gray!40] (0,0) -- (1,1) -- (0,1) -- (0,0);
\draw(1,0) -- (1,1);
\draw[black, fill=white] (0,0) circle (0.3);
\draw[black, fill=white] (0,1) circle (0.2);
\draw[black, fill=white] (1,1) circle (0.2);
\draw[black, fill=white] (1,0) circle (0.2);
\draw[black, fill=white] (0,0) circle (0.2);
\draw(0,0) node{$4$};
\draw(0,1) node{$1$};
\draw(1,1) node{$2$};
\draw(1,0) node{$3$};

\draw[black] (0,3)--(0,2) -- (1,3) ;
\draw[black, fill=white] (0,2) circle (0.2);
\draw[black, fill=white] (1,3) circle (0.2);
\draw[black, fill=white] (0,3) circle (0.2);
\draw(0,2) node{$6$};
\draw(1,3) node{$7$};
\draw(0,3) node{$5$};
\end{tikzpicture}
$+$
\begin{tikzpicture}[scale=1]
\draw (1,1)--(1,2);

\draw[black, fill=gray!40] (0,0) -- (1,1) -- (0,1) -- (0,0);
\draw(1,0) -- (1,1);
\draw[black, fill=white] (0,0) circle (0.3);
\draw[black, fill=white] (0,1) circle (0.2);
\draw[black, fill=white] (1,1) circle (0.2);
\draw[black, fill=white] (1,0) circle (0.2);
\draw[black, fill=white] (0,0) circle (0.2);
\draw(0,0) node{$4$};
\draw(0,1) node{$1$};
\draw(1,1) node{$2$};
\draw(1,0) node{$3$};

\draw[black] (1,3)--(1,2) -- (2,3) ;
\draw[black, fill=white] (1,2) circle (0.2);
\draw[black, fill=white] (2,3) circle (0.2);
\draw[black, fill=white] (1,3) circle (0.2);
\draw(1,2) node{$6$};
\draw(2,3) node{$7$};
\draw(1,3) node{$5$};
\end{tikzpicture}
$+$
\begin{tikzpicture}[scale=1]
\draw (1,0)--(2,1);

\draw[black, fill=gray!40] (0,0) -- (1,1) -- (0,1) -- (0,0);
\draw(1,0) -- (1,1);
\draw[black, fill=white] (0,0) circle (0.3);
\draw[black, fill=white] (0,1) circle (0.2);
\draw[black, fill=white] (1,1) circle (0.2);
\draw[black, fill=white] (1,0) circle (0.2);
\draw[black, fill=white] (0,0) circle (0.2);
\draw(0,0) node{$4$};
\draw(0,1) node{$1$};
\draw(1,1) node{$2$};
\draw(1,0) node{$3$};

\draw[black] (2,2)--(2,1) -- (3,2) ;
\draw[black, fill=white] (2,1) circle (0.2);
\draw[black, fill=white] (3,2) circle (0.2);
\draw[black, fill=white] (2,2) circle (0.2);
\draw(2,1) node{$6$};
\draw(3,2) node{$7$};
\draw(2,2) node{$5$};
\end{tikzpicture}.

\end{center}
\end{example}

On this algebra, we define the following coproduct:
\begin{equation*}
\Delta(T)=\sum_{{e \in E(T)}} R(T-\{e\}) \otimes L(T-\{e\}).
\end{equation*}
This coproduct satisfies the previous confluence law. Hence the associated algebra is PreLie free, with primitive given by hypertrees with no binary edge, i.e. edge of cardinality two.
\end{example}

\begin{example}[Fat trees] \label{Fat trees}
In \cite{mar2} and \cite{PhD}, the second author also studied the notion of (rooted) fat trees introduced in \cite{Zas}:

\begin{definition}[ \cite{Zas}] 
A \emph{fat tree on a set $V$} is a partition of $V$, whose parts are called \emph{vertices}, together with edges linking elements of different vertices, such that: 
\begin{itemize}
\item a \emph{walk} on the fat tree is an alternating sequence $\left(a_1, b_1, c_1, a_2, \ldots, c_n\right)$, where for every $i$, $a_i$ and $c_i$ are elements of different vertices and $b_i$ is an edge between $a_i$ and $c_i$, and for every $i$ between $1$ and $n-1$, $c_i$ and $a_{i+1}$ are elements of the same vertex;
\item For every pair of elements of different vertices $\left(a,c\right)$, there exists one and only one walk from $a$ to $c$.
\end{itemize}

A \emph{rooted fat tree} is a fat tree with a distinguished element called the \emph{root}.
\end{definition}

In Figure \ref{exemple fat}, an example of a rooted fat tree is presented. The root is circled.

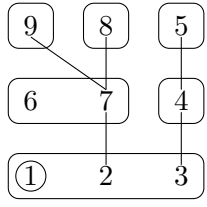
\begin{figure} 
\centering
\begin{tikzpicture}[scale=1]
\draw[black, rounded corners] (-0.3,-0.3) rectangle(2.3,0.3);
\draw[black, rounded corners] (-0.3,0.7) rectangle(1.3,1.3);
\draw[black, rounded corners] (1.7,0.7) rectangle(2.3,1.3);
\draw[black, rounded corners] (-0.3,1.7) rectangle(0.3,2.3);
\draw[black, rounded corners] (0.7,1.7) rectangle(1.3,2.3);
\draw[black, rounded corners] (1.7,1.7) rectangle(2.3,2.3);
\draw (1,0.85) -- (1,0.15);
\draw (2,0.85) -- (2,0.15);
\draw (1,1.85) -- (1,1.15);
\draw (0,1.85) -- (1,1.15);
\draw (2,1.85) -- (2,1.15);
\draw[black, fill=white] (0,0) circle (0.2);
\draw (0,0) node{$1$};
\draw (1,0) node{$2$};
\draw (2,0) node{$3$};
\draw (2,1) node{$4$};
\draw (2,2) node{$5$};
\draw (0,1) node{$6$};
\draw (1,1) node{$7$};
\draw (1,2) node{$8$};
\draw (0,2) node{$9$};
\end{tikzpicture}
\caption{ \label{exemple fat} A rooted fat tree.}
\end{figure}

The vector space of rooted fat trees can easily be endowed with a PreLie product given by the tree structure.

 On this algebra, we define the following coproduct:
\begin{equation*}
\Delta(x)=\sum_{\substack{e \in E(x),\\ e: a\in A \rightarrow b \in B}} \frac{1}{|A||B|} R(x-\{e\}) \otimes L(x-\{e\}).
\end{equation*}
This coproduct satisfies the previous confluence law. Hence the associated algebra is PreLie free, with primitive given by fat trees with no edges.

\end{example}

\begin{remark}
This example shows that the formula \eqref{crochet} should contain a sum over permutations of primitive elements to take into account the action of the symmetric group, and automorphism groups of operations, which does not appear in \cite{GBO}. For instance, in PreLie(3), we have, denoting the generating product and coproduct respectively by $\Delta$ and $\mu$:
\begin{equation*}
(\Delta \otimes id) \circ \Delta \circ \mu \circ \left( \mu \otimes id - id \otimes \mu \right) (p_1, p_2, p_3) = p_1 \otimes p_2 \otimes p_3 + p_1 \otimes p_3 \otimes p_2,
\end{equation*}
for any triples of primitive elements $p_i$. The second term of the sum will always appear if the first one appear because of the equality:
\begin{equation*}
\mu \circ \left( \mu \otimes id - id \otimes \mu \right) (p_1, p_2, p_3) = \mu \circ \left( \mu \otimes id - id \otimes \mu \right) (p_1, p_3, p_2).
\end{equation*}
\end{remark}

\subsection{Perm Case.}

We now apply the rigidity theorem in dual case to Operad Perm, introduced by Chapoton in \cite{Perm2} endowed with the usual vector space basis given by pointed sets. 

We recall that the relation satisfied by a Perm product $\times$ is given by: 
\begin{align*}
(x \times y) \times z &= x \times (y \times z) \\
&= x \times (z \times y)
\end{align*}

Combinatorially, the product $S_1 \times S_2$ is the pointed set obtained by the union of $S_1$ and $S_2$ pointed in the pointed element of $S_1$.

The dual coproduct is then given by the sum over all possible ways to split the set $S$ in two pointed set $S_1$ and $S_2$ such that $S$ is the union of $S_1$ and $S_2$ and the pointed element of $S_1$ is the pointed element of $S$:
\begin{equation*}
\Delta(S)=\sum_{\substack{S_1 \cup S_2 = S \\ p(S) = p(S_1)}} S_1  \otimes S_2,
\end{equation*}
where, for any pointed set $T$, $p(T)$ denotes the pointed element of $T$.

In this case, the idempotent is given explicitly by the following formula:
\begin{equation*}
e= \sum_{n \geq 1} \frac{(-1)^{n-1}}{n!} n^{n-1} D_n,
\end{equation*} 
where $D_n$ is defined recursively by:
\begin{align*} 
D_{1} &=id  \\
D_{n+1} &= \times \circ (D_n \otimes id) \circ \Delta.
\end{align*}

\begin{remark}
We use the notation $D$ because the diagram of $D_2 = \times \circ \Delta$ is a diamond.
\end{remark}

\begin{proof}
We show by induction that $D_k(\{\mathbf{1}, \ldots, n\}) = \prod_{i=1}^k (n-i) (k+1)^{n-k-1}$. The iterated coproduct satisfies the following relation:
\begin{equation*}
\Delta_k (\{\mathbf{1}, \ldots, n\}) = \sum_{p=k}^{n-1} (n-p) \binom{n-1}{p-1} \Delta_{k-1}((\{\mathbf{1}, \ldots, p\})).
\end{equation*}
Indeed, only terms in the coproduct whose left part has at least $k$ elements remain after $k$ coproducts. To choose such a decomposition, we have $(n-p) \binom{n-1}{p-1}$ choices.
\end{proof}

To apply the rigidity theorem to some algebras, we compute the associated confluence law:

\begin{proposition}
The Perm product and its dual coproduct satisfy on primitive elements the following confluence law, with $\overline{T}=\sum_{k \geq 1} \frac{1}{k} \delta_{T \in \operatorname{Perm}(k)} $ and $\dot{T}=T + \delta_{ (\Delta(T))_2 \in \operatorname{Perm}(1)} (\Delta(T))_2 \times (\Delta(T))_1$, for any element $T$, where $\delta$ is the Kronecker symbol:
\begin{align*}\label{RelCompPerm}
\Delta(T \times S) =& T \otimes \dot{S} + T_1 \otimes (T_2 \times S + \dot{S} \times \overline{T_2}) + (T_1 \times S) \otimes T_2 + (T \times S_1) \otimes S_2 \\ 
+& (T \times \overline{S_2}) \otimes \dot{S_1}+ (T_1 \times S_1) \otimes (T_2 \times \overline{S_2} + S_2 \times \overline{T_2}) \\
+& (T_1 \times \overline{S_2}) \otimes (T_2 \times S_1 +  S_1 \times \overline{T_2}),
\end{align*}
where $\Delta(T) = T_1 \otimes T_2$ and $\Delta(S) = S_1 \otimes S_2$.
\end{proposition}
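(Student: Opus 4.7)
The plan is to perform a direct combinatorial verification in the pointed-set model of $\operatorname{Perm}$. I write $T\in\operatorname{Perm}(n)$ as a pointed set $(\underline n, t)$ and $S\in\operatorname{Perm}(k)$ as $(\underline k, s)$, so that $T \times S \in \operatorname{Perm}(n+k)$ is the disjoint-union pointed set with distinguished element $t$. By the formula for the dual coproduct, $\Delta(T \times S)$ is a sum over all ordered set partitions $\underline{n+k} = A \sqcup B$ with $A, B \ne \emptyset$, together with a choice of pointing of $B$, subject to the constraint that $A$ be pointed at $t$. I would enumerate these splittings according to three binary data: whether $s \in A$ or $s \in B$, whether $B$ meets $T \setminus \{t\}$, and, when $s \in B$, whether the pointing of $B$ coincides with $s$ or not.

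The splittings that respect the separation $T \mid S$ match directly the terms $(T_1 \times S) \otimes T_2$ and $(T \times S_1) \otimes S_2$, where $\Delta(T) = T_1 \otimes T_2$ and $\Delta(S) = S_1 \otimes S_2$; the boundary case $A = T$ together with the various pointings of $B$ assembles into $T \otimes \dot S$. The mixed splittings, in which $B$ pulls simultaneously from $T \setminus \{t\}$ and from $S$, are then subdivided by the pointing of $B$: if the pointing lies on the $T$-side one obtains tensors of the form $(T_1 \times S_\bullet) \otimes (T_2 \times \overline{S_\bullet})$, and if it lies on the $S$-side one obtains the siblings $(T_1 \times S_\bullet) \otimes (S_\bullet \times \overline{T_2})$. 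The rescaling $\overline{X} = \tfrac{1}{|X|} X$ compensates for the fact that the operadic expression $\mu \times \overline{X}$ symmetrises over the $|X|$ possible pointings of the $X$-part, whereas each concrete splitting fixes one canonical pointing.

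The main obstacle will be the coefficient bookkeeping: for every admissible pointed-set datum $(A,t) \otimes (B,b)$ one must verify that the total coefficient produced by the right-hand side is exactly one. This is where the $\dot{\cdot}$ correction plays its essential role, namely to reinject the cases where an arity-one fragment of $\Delta(S)$ or of $\Delta(S_1)$ must be reattached on the opposite side of the pointing of its block, a configuration invisible to $\Delta$ alone because of the convention placing the pointed subset on the left. Carrying out this matching case by case yields the claimed identity, and the ``on primitive elements'' hypothesis guarantees that no iterated coproducts intervene and that the identity encodes precisely the confluence law between $\times$ and $\Delta$.
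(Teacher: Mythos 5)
Your proposal is correct and follows essentially the same route as the paper: both proofs form the merged pointed set $T\sqcup S$ pointed at $p(T)$, enumerate the splittings $A\sqcup B$ (with $A$ pointed at $p(T)$ and $B$ pointed arbitrarily) according to how $B$ meets $T$ and $S$ and where $p(S)$ and the pointing of $B$ land, and match each case to a term of the right-hand side. Your reading of $\overline{X}$ as the $1/|X|$ normalisation for an irrelevant pointing and of $\dot{X}$ as the sum over all repointings is exactly the bookkeeping the paper relies on, and your level of detail matches (indeed slightly exceeds) the paper's own case-by-case sketch.
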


\begin{proof}
The coproduct of the product is obtained by merging $T$ and $S$, forgetting the pointed element of $S$ and then splitting the pointed set into two pointed sets, the left one containing the pointed element of $T$. The equality is obtained by considering the different cases: 
\begin{itemize}
\item if the splitting separates $S$ and $T$, $T$ is on the left side and $S$ can be pointed in any element, 
\item  if the splitting splits $T$ only, the right part of the coproduct can be pointed in an element of $t$ or any element of $S$, 
\item  if the splitting splits $S$ only, the pointed element of the left part of the coproduct is fixed and the right part can contain the pointed element of $S$ (which is in $S_1$) or not and then be pointed in any element.
\item if the splitting splits both $S$ and $T$, the pointed element of $S$ can be on the right or on the left part and the pointed element of the right part can be chosen in $T$ or in $S$.
\end{itemize}
\end{proof}

\subsection{NAP Case.}

We consider here the NAP operad (see \cite{ChapLiv} and \cite{dzuma}). Livernet has proven in \cite{NAPPL} the existence of a mixed distributive law such that any conilpotent PreLie coNAP bialgebra satisfying the associated mixed distributive law is free and cofree over its primitive elements. We study here NAP coNAP bialgebras.

 The NAP product $\cdot$ on a vector space $V$ satisfies the following relation, for all $x$,$y$ and $z$ in $V$:
\begin{equation*} 
(x \cdot y) \cdot z=(x \cdot z) \cdot y.
\label{NAP rel}
\end{equation*}

A basis of a free NAP algebra over a vector space is given by the set of rooted trees on $n$ vertices labelled by elements from $V$, denoted by $RT_n(V)$. 
The product of two trees $T$ and $S$ in this algebra is then the tree $T \cdot S$ obtained by grafting the root of $S$ to the root of $T$ (see Figure \ref{T.NAPS}).

\begin{figure}
\begin{tikzpicture}
[level distance=10mm,
every node/.style={circle,inner sep=1pt, draw}]
\node  {1} [grow'=up]
child {node {2}}
child {node {3}};
\end{tikzpicture}
$\cdot$
\begin{tikzpicture}
[level distance=10mm,
every node/.style={circle,inner sep=1pt, draw}]
\node  {4} [grow'=up]
child {node {5}};
\end{tikzpicture}
$=$
\begin{tikzpicture}
[level distance=10mm,
every node/.style={circle,inner sep=1pt, draw}]
\node  {1} [grow'=up]
child {node {2}}
child {node {3}}
child {node {4}
child {node {5}}};
\end{tikzpicture}
\caption{NAP product \label{T.NAPS}}
\end{figure}
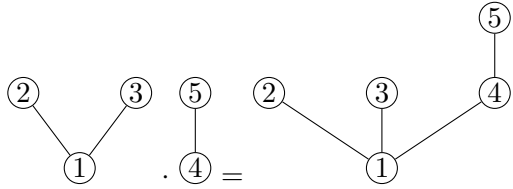

The coproduct $\Delta(T)$ of a tree $T$ in this algebra, given as the dual of the product, is the sum of all possible trees obtained by deleting an edge attached to the root of $T$. The part containing the root of $T$ is then the left part of the coproduct (see Figure \ref{DNAP(T)}). 

\begin{figure}
$\Delta:
\begin{tikzpicture}
[level distance=10mm,
every node/.style={circle,inner sep=1pt, draw}]
   \node  {1} [grow'=up]
child {node {2}}
child {node {3}
child{node{4}}};
\end{tikzpicture} 
\mapsto
\begin{tikzpicture}
[level distance=10mm,
every node/.style={circle,inner sep=1pt, draw}]
\node  {1} [grow'=up]
child {node {2}};
\end{tikzpicture} \otimes
\begin{tikzpicture}
[level distance=10mm,
every node/.style={circle,inner sep=1pt, draw}]
\node  {3} [grow'=up]
child{node{4}} ;
\end{tikzpicture}+ 
\begin{tikzpicture}
[level distance=10mm,
every node/.style={circle,inner sep=1pt, draw}]
\node  {1} [grow'=up]
child {node {3}
child{node{4}}};
\end{tikzpicture} \otimes
2$
\caption{NAP coproduct \label{DNAP(T)}}
\end{figure}
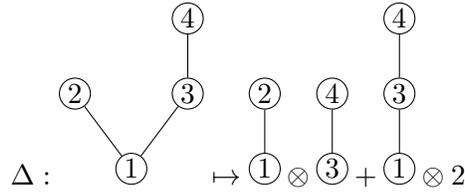

To apply the rigidity theorem to some algebras, we compute the associated confluence law (which is a mixed distributive law):
\begin{proposition}
The NAP product $\cdot$ and its associated dual coproduct satisfy the following mixed distributive law:
\begin{equation*} \label{comprelNAP}
\Delta (T \cdot S) = T \otimes S + T_1 \cdot S \otimes T_2, 
\end{equation*}
where $\Delta(T) = T_1 \otimes T_2$ (Sweedler's notation).
\begin{figure}[h!]
\begin{tikzpicture}[scale=0.3, thick]
\draw (0,0)--(0,1)--(1,1.5)--(1,2.5)--(0,3)--(0,4);
\draw (2,0)--(2,1)--(1,1.5)--(1,2.5)--(2,3)--(2,4);
\draw (3,2) node {$=$};
\draw (4,0)--(4,4);
\draw (6,0)--(6,4);
\draw (7,2) node{$+$};
\draw (9,0)--(9,1)--(8,2)--(9,3)--(9,4);
\draw (9,1)..controls (10,2) and (10.5,2.5) .. (10.5,3)--(10.5,4);
\draw (9,3)..controls (10,2) and (10.5,1.5).. (10.5,1)--(10.5,0);
\end{tikzpicture}
\end{figure}
\end{proposition}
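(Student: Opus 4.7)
The plan is to prove this proposition by direct combinatorial case analysis on the basis of rooted trees, since both the NAP product and its dual coproduct have clean descriptions on this basis. By linearity it suffices to verify the identity when $T$ and $S$ are rooted trees; the general case then follows from bilinearity of the product and linearity of the coproduct.

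The first step is to unpack the structure of $T \cdot S$. By the description recalled just before the proposition, the tree $T \cdot S$ has vertex set $V(T) \sqcup V(S)$, edge set $E(T) \sqcup E(S) \sqcup \{e^\star\}$ where $e^\star$ is a new edge joining $\mathrm{root}(T)$ to $\mathrm{root}(S)$, and its root equals $\mathrm{root}(T)$. Consequently the edges of $T \cdot S$ incident to $\mathrm{root}(T \cdot S)$ partition into two groups: the edges of $T$ incident to $\mathrm{root}(T)$, and the single new edge $e^\star$. Since $\Delta$ is by definition the sum over edges incident to the root, I split $\Delta(T \cdot S)$ into two contributions accordingly.

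The second step is to analyse each contribution. Removing $e^\star$ disconnects $T \cdot S$ into two components: the subtree rooted at $\mathrm{root}(T)$, which is exactly $T$ (since the grafting only added $e^\star$), and the subtree rooted at $\mathrm{root}(S)$, which is exactly $S$. This contributes the tensor $T \otimes S$. Now let $e \in E(T)$ be incident to $\mathrm{root}(T)$, so that $e$ contributes a summand $R_e(T) \otimes L_e(T)$ to $\Delta(T)$ (written $T_1 \otimes T_2$ in Sweedler notation). In $T \cdot S$, removing this same edge $e$ gives two components: the component containing $\mathrm{root}(T \cdot S) = \mathrm{root}(T)$, which is $R_e(T)$ together with the still-attached grafted tree $S$, that is, the tree $R_e(T) \cdot S$; and the other component, which is unchanged from $L_e(T)$ because $e$ lies strictly inside $T$ and away from $e^\star$. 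Summing over such $e$ yields exactly $T_1 \cdot S \otimes T_2$.

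Adding the two contributions produces the claimed formula $\Delta(T \cdot S) = T \otimes S + T_1 \cdot S \otimes T_2$. There is no serious obstacle; the only point requiring care is verifying that when removing an internal edge $e$ of $T$ adjacent to $\mathrm{root}(T)$, the grafted copy of $S$ remains attached to the root-side component and does not accidentally travel to the other side. This follows because $e^\star$ connects $\mathrm{root}(T)$ (which stays in $R_e(T)$) to $\mathrm{root}(S)$, so $S$ is rigidly attached to the root-side component throughout the edge deletion. Once this is noted, the identification $R_e(T \cdot S) = R_e(T) \cdot S$ and $L_e(T \cdot S) = L_e(T)$ is immediate, completing the verification.
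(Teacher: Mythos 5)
Your proof is correct and follows essentially the same route as the paper: the authors likewise split $\Delta(T\cdot S)$ according to whether the deleted root-edge is the newly added grafting edge (yielding $T\otimes S$) or an edge of $T$ adjacent to its root (yielding $T_1\cdot S\otimes T_2$). Your version simply spells out the identification of the two components after deletion in more detail.
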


\begin{proof}
$\Delta (T \cdot S)$ is the sum of all possibilities of deleting an edge containing the root of $T$ in the tree obtained by adding an edge $e$ between the root of $T$ and the one of $S$. The result comes from the following decomposition: either the edge deleted is $e$ or it is an edge of $T$.
\end{proof}

The previous reasoning gives us the following expression for idempotent, checked in the proof below:
\begin{equation*}
e= \sum_{n \geq 1} (-1)^{n-1} \frac{n}{n!} D_n,
\end{equation*} 
where $D_n$ is defined recursively by:
\begin{align*} 
D_{1}=id  \\
D_{n+1} = \cdot \circ (D_n \otimes id) \circ \Delta.
\end{align*}

\begin{proof}
 The operator $D_n$ can be rewritten in terms of rooted trees by \scalebox{0.5}{\begin{tikzpicture}[scale=0.5]
[every node/.style={circle,draw}]
\node  {1} [grow'=up]
child {node {2}}
child {node {3}}
child {node {...}}
child {node {n}};
\end{tikzpicture} } $\circ \scalebox{0.5}{\begin{tikzpicture}[scale=0.5]
[every node/.style={circle,draw}]
\node  {1} [grow'=up]
child {node {2}}
child {node {3}}
child {node {...}}
child {node {n}};
\end{tikzpicture} }^*$.
The partial sum $e_k= \sum_{n \geq 1}^k (-1)^{n-1} \frac{n}{n!} D_n$ vanishes over all rooted trees on at least two vertices, whose root has at most $k$ children.

\end{proof}

Any free PreLie algebra can be endowed with a natural structure of free NAP algebra. Note that it is not the case for non free PreLie algebras: for instance, the PreLie structure on Mag operad cannot be endowed with a NAP product (see \cite{EBBDODM}). 

The examples developed in the subsection \ref{PLcase} can then be seen as NAP-algebras when considering grafting to the root only.

\begin{example}[Box trees] 
Considering Example \ref{Box trees} endowed with the natural NAP structure associated to the PreLie product, we can define the following coNAP coproduct:
\begin{equation*}
\Delta(x)=\sum_{\substack{e \in E_r(x),\\ e: r \rightarrow b}} \frac{1}{|r|} R(x-\{e\}) \otimes L(x-\{e\}),
\end{equation*}
where $E_r(x)$ is the set of edges of $x$ adjacent to the root $r$ of $x$.

This coproduct satisfies the previous mixed distributive law. Hence the associated algebra is NAP free, with primitive given by trees with no edges.
\end{example}

\begin{example}[Hypertrees] 
Considering Example \ref{hypertree}, we define a NAP product on rooted hypertrees $H \cdot G$ as the hypertree obtained by grafting the root $r_G$ of $G$ on the root $r_H$ of $H$, where the grafting is given by adding an edge between $r_G$ and $r_H$.

On this algebra, we define the following coproduct:
\begin{equation*}
\Delta(T)=\sum_{{e \in E_r(T)}} R(T-\{e\}) \otimes L(T-\{e\}),
\end{equation*}
where $E_r(T)$ is the set of edges of $T$ adjacent to the root $r$ of $x$.

This coproduct satisfies the previous mixed distributive law. Hence the associated algebra is NAP free, with primitive given by hypertrees with no binary edge, i.e. edge of cardinality two.
\end{example}

\begin{example}[Fat trees] 
Considering Example \ref{Fat trees} endowed with the natural NAP structure associated to the PreLie product, we can define the following coNAP coproduct:
\begin{equation*}
\Delta(x)=\sum_{\substack{e \in E_r(x),\\ e: r\in R \rightarrow b \in B}} \frac{1}{|R||B|} R(x-\{e\}) \otimes L(x-\{e\}),
\end{equation*}
where $E_r(x)$ is the set of edges of $x$ adjacent to the root $r$ of $x$.

This coproduct satisfies the previous mixed distributive law. Hence the associated algebra is NAP free, with primitive given by trees with no edges.

\end{example}

\subsection{PAN Case.}

We consider here the Koszul dual of the NAP operad, denoted by PAN. The PAN product $\leftharpoondown$ on a vector space $V$ satisfies the following relation, for all $x$,$y$ and $z$ in $V$:
\begin{align*} 
x \leftharpoondown (y \leftharpoondown z) = 0 \\
(x \leftharpoondown y) \leftharpoondown z = (x \leftharpoondown z) \leftharpoondown y
\label{PAN rel}
\end{align*}

\begin{proposition}
The operad PAN is the Koszul dual of the operad NAP. 
\end{proposition}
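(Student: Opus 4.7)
The plan is to apply the Ginzburg--Kapranov construction of Koszul duality for binary quadratic operads (see \cite{Operads}): writing $\mathrm{NAP}=\mathcal{P}(E,R)$, I compute $\mathrm{NAP}^{!}=\mathcal{P}(E^{\vee},R^{\perp})$, where $E^{\vee}=E^{*}\otimes\mathrm{sgn}_{2}$ and $R^{\perp}\subset\mathcal{T}(E^{\vee})(3)$ is the orthogonal of $R$ under the canonical Koszul pairing, and then verify that the resulting presentation coincides with that of $\mathrm{PAN}$.

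The first step is to identify the generators. The operation $\cdot$ of $\mathrm{NAP}$ has no symmetry, so $E(2)\simeq\mathbb{K}[\mathfrak{S}_{2}]$ as an $\mathfrak{S}_{2}$-module. Hence $E^{\vee}(2)\simeq\mathbb{K}[\mathfrak{S}_{2}]$ as well, so $\mathrm{NAP}^{!}$ is generated by a single binary operation with no symmetry, which I identify with $\leftharpoondown$.

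Next I work in arity $3$. The free operad $\mathcal{T}(E)(3)$ is $12$-dimensional, decomposing as a $6$-dimensional left-tree subspace spanned by $L_{\sigma}=(x_{\sigma(1)}\cdot x_{\sigma(2)})\cdot x_{\sigma(3)}$, $\sigma\in\mathfrak{S}_{3}$, together with an analogous $6$-dimensional right-tree subspace. The canonical Koszul pairing with $\mathcal{T}(E^{\vee})(3)$ is block-diagonal with respect to these decompositions, with a relative sign between the two blocks reflecting $\mathrm{sgn}_{2}$. The $\mathfrak{S}_{3}$-submodule $R$ generated by the $\mathrm{NAP}$ relation $L_{\mathrm{id}}-L_{(23)}$ lies entirely in the left-tree subspace and is $3$-dimensional; this is consistent with $\dim\mathrm{NAP}(3)=3^{2}=9=12-3$ via Cayley's formula. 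Consequently $\dim R^{\perp}=9$.

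Finally I match $R^{\perp}$ with the relations of $\mathrm{PAN}$. Since $R$ is contained in the left-tree part, $R^{\perp}$ automatically contains the whole right-tree subspace of $\mathcal{T}(E^{\vee})(3)$, which is precisely the $\mathfrak{S}_{3}$-orbit of $x\leftharpoondown(y\leftharpoondown z)=0$, accounting for $6$ of the $9$ dimensions. The remaining $3$ dimensions lie in the left-tree block, and a direct pairing computation---in which the $\mathrm{sgn}_{2}$ twist converts the implicit antisymmetrization in the $\mathrm{NAP}$ relation into the corresponding symmetrization on the $\leftharpoondown$ side---identifies this subspace with the $\mathfrak{S}_{3}$-orbit of $(x\leftharpoondown y)\leftharpoondown z-(x\leftharpoondown z)\leftharpoondown y$. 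The main obstacle is the careful sign bookkeeping in the Koszul pairing forced by the $\mathrm{sgn}_{2}$ twist in $E^{\vee}$; once that is pinned down, the rest reduces to a routine linear-algebra check on $6$-dimensional blocks.
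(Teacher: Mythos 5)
Your proposal is correct and follows essentially the same route as the paper, which simply asserts that the orthogonal of the NAP relation $(x\cdot y)\cdot z=(x\cdot z)\cdot y$ is the set of PAN relations; you supply the explicit Ginzburg--Kapranov computation (block-diagonal Koszul pairing on the $12$-dimensional space $\mathcal{T}(E)(3)$, the $3$-dimensional relation module sitting in the left-tree block, hence the full right-tree block plus a $3$-dimensional left-tree complement in $R^{\perp}$), and the dimension counts $9=3^{2}$ and $12-9=3$ all check out. The only quibble is the phrase about the $\mathrm{sgn}_{2}$ twist turning antisymmetrization into symmetrization, which reads backwards --- the twist is precisely what makes the orthogonal complement of $L_{\mathrm{id}}-L_{(23)}$ again a \emph{difference} $L^{\vee}_{\mathrm{id}}-L^{\vee}_{(23)}$ rather than a sum --- but the relation you ultimately write down is the correct one.
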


\begin{proof} 
We use the methods of V. Dotsenko and E. Hoffbeck (see \cite{Hoffbeck}) to determine the Koszul dual of NAP operad. Let us recall the relation in NAP:
\begin{equation*} 
(x \cdot y) \cdot z=(x \cdot z) \cdot y.
\end{equation*}
The orthogonal of this relation is precisely the relations of PAN.
\end{proof}

A basis of a free PAN algebra over a vector space $V$ is given by the set of pointed sets on $n$ elements labelled by elements from $V$. 
Denoting in bold the pointed element of the set, the product of two pointed sets $\{\mathbf{x_1}, \ldots, x_k\}$ and $\{\mathbf{y_1}, \ldots, y_l\}$ in this algebra is then the pointed set $\{\mathbf{x_1}, \ldots, x_k, y_1, \ldots, y_l\}$ if $l=1$, $0$ otherwise.

The coproduct $\Delta(u)$ of a pointed set $u$ pointed in $x$ in this algebra, given as the dual of the product, is given by:
\begin{equation} 
\Delta(u) = \sum_{\substack{\mathrm{a} \in u \\ \mathrm{a} \neq x}} u-\{\mathrm{a}\} \otimes \{\text{\textbf{a}} \},
\end{equation}
where the set $u-\{\mathrm{a}\}$ is pointed in $x$.

To apply the rigidity theorem to some algebras, we compute the associated confluence law:

\begin{proposition}
The PAN product $\leftharpoondown$ and its associated dual coproduct satisfy the following confluence law on the primitive elements of any conilpotent PAN-bialgebra:
\begin{equation} \label{comprelPAN}
\Delta (u \leftharpoondown v) = \delta_{v \in \operatorname{PAN}(1)} \left(u \otimes v + u_1 \cdot v \otimes u_2 \right), 
\end{equation}
where $\Delta(u) = u_1 \otimes u_2$ (Sweedler's notation) and $\delta$ is the Kronecker symbol.
\begin{figure}[h!]
\begin{tikzpicture}[scale=0.3, thick]
\draw (-1,0)--(-1,1)--(-2,1.5)--(-2,2.5)--(-1,3)--(-1,4);
\draw (-3,0)--(-3,1)--(-2,1.5)--(-2,2.5)--(-3,3)--(-3,4);
\draw (1.5,2) node {$= \delta_{v \in \operatorname{PAN}(1)}$};
\draw (5,0)--(5,4);
\draw (7,0)--(7,4);
\draw (8,2) node{$+$};
\draw (10,0)--(10,1)--(9,2)--(10,3)--(10,4);
\draw (10,1)..controls (11,2) and (11.5,2.5) .. (11.5,3)--(11.5,4);
\draw (10,3)..controls (11,2) and (11.5,1.5).. (11.5,1)--(11.5,0);
\end{tikzpicture}
\end{figure}
\end{proposition}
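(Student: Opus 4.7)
The plan is to verify the identity \eqref{comprelPAN} by a direct combinatorial computation on the pointed-set basis of the free PAN algebra, which gives an explicit description of both the product $\leftharpoondown$ and its dual coproduct $\Delta$. The argument splits into two cases according to the arity of $v$, and then reduces to a routine bookkeeping of pointed elements.

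First, if $v$ has arity at least two, the product rule for PAN forces $u \leftharpoondown v = 0$, so the left-hand side of \eqref{comprelPAN} vanishes; the right-hand side also vanishes thanks to the factor $\delta_{v \in \operatorname{PAN}(1)}$. This reduces the problem to the case $v = \{\mathbf{y}\} \in \operatorname{PAN}(1)$. Writing $u = \{\mathbf{x}, x_2, \ldots, x_k\}$ with pointed element $x$, the product gives $u \leftharpoondown v = \{\mathbf{x}, x_2, \ldots, x_k, y\}$, again with $x$ as pointed element. Applying the definition of $\Delta$, I sum over the non-pointed elements $\mathrm{a} \in \{x_2, \ldots, x_k, y\}$, splitting the result into two contributions:
\begin{equation*}
\Delta(u \leftharpoondown v) = \big((u \leftharpoondown v) - \{y\}\big) \otimes \{\mathbf{y}\} + \sum_{i=2}^{k} \big((u \leftharpoondown v) - \{x_i\}\big) \otimes \{\mathbf{x_i}\}.
\end{equation*}

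The first term equals $u \otimes v$ directly. For the second, I use the elementary identity $(u \leftharpoondown v) - \{x_i\} = (u - \{x_i\}) \leftharpoondown v$ (the pointed element $x$ is retained in both cases, and $v$ is not involved in the deletion). Comparing with the definition
\begin{equation*}
\Delta(u) = \sum_{i=2}^{k} (u - \{x_i\}) \otimes \{\mathbf{x_i}\} = u_1 \otimes u_2,
\end{equation*}
the second sum rewrites as $(u_1 \leftharpoondown v) \otimes u_2$, which matches the right-hand side of \eqref{comprelPAN}, interpreting the symbol $\cdot$ there as the PAN product $\leftharpoondown$ (by analogy with the NAP case treated just above).

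The proof contains no real obstacle: everything reduces to the explicit combinatorial descriptions of $\leftharpoondown$ and $\Delta$ on pointed sets. The only subtlety worth underlining is the tracking of the pointed element across the product and the extraction, which ensures that the rewriting $(u \leftharpoondown v) - \{x_i\} = (u - \{x_i\}) \leftharpoondown v$ is valid and that the term $(u \leftharpoondown v) - \{y\}$ is indeed $u$ (pointed in $x$) rather than some pointed set with a modified marker. Once this is in place, the remaining step is a pure index manipulation.
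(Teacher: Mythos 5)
Your proof is correct and follows essentially the same route as the paper's: both sides vanish when $v$ is not primitive, and when $v$ is a singleton the coproduct of $u \leftharpoondown v$ decomposes according to whether the removed element separates $u$ from $v$ or splits $u$. Your version simply makes the bookkeeping of pointed elements explicit (and rightly reads the ``$\cdot$'' in the statement as the PAN product $\leftharpoondown$), which the paper's two-line proof leaves implicit.
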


\begin{proof} If $v \notin \mathcal{F}_1$, $\Delta (u \leftharpoondown v)=0$. Otherwise, the coproduct is obtained by either separating $u$ and $v$ or splitting $u$.
\end{proof}

The previous reasoning gives us the following expression for idempotent, checked in the proof below:
\begin{equation*}
e= \sum_{n \geq 1} (-1)^{n-1} \frac{n}{n!} D_n,
\end{equation*} 
where $D_n$ is defined recursively by:
\begin{align*} \label{defDn}
D_{1}=id  \\
D_{n+1} = \cdot \circ (D_n \otimes id) \circ \Delta.
\end{align*}

\begin{proof}
 We show by induction that $D_k(\{\mathbf{1}, \ldots, n\}) = \prod_{i=1}^{k-1} (n-i) $. Indeed, the iterated coproduct satisfies the following relation:
\begin{equation}
\Delta_k (\{\mathbf{1}, \ldots, n\}) = (n-1) \Delta_{k-1}((\{\mathbf{1}, \ldots, n-1\})).
\end{equation}
The equality then comes from Newton binomial theorem.
\end{proof}

\begin{center}
\emph{coPAN-Perm bialgebras}
\end{center}

Any Perm algebra can be endowed with a structure of coPAN bialgebras by considering the coproduct dual to the natural structure of PAN-algebra.

The following confluence law follows directly from the definitions:
\begin{proposition}
The Perm product $\times$ and the associated dual coPAN-coproduct satisfy the following confluence law on the primitive elements of any conilpotent coPAN-Perm-bialgebra:
\begin{equation} \label{comprelPermPAN}
\Delta (u \times v) = \delta_{v \in \operatorname{PAN}(1)} u \otimes v + u_1 \times v \otimes u_2 + u \times v_1 \otimes v_2 + \delta_{v_1 \in \operatorname{PAN}(1)} u \times v_2 \otimes v_1, 
\end{equation}
where $\Delta(u) = u_1 \otimes u_2$ and $\Delta(v) = v_1 \otimes v_2$ (Sweedler's notation).
\end{proposition}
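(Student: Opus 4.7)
The plan is to unfold both sides on the pointed-set basis of Perm and to match the resulting terms family by family. Fix pointed sets $u$ and $v$ with marked elements $x = \operatorname{pt}(u)$ and $y = \operatorname{pt}(v)$; the Perm product gives $u \times v = u \sqcup v$ pointed at $x$, while the coPAN coproduct, obtained by dualising the natural PAN structure $w^1 \leftharpoondown w^2 = w^1 \times w^2$ when $|w^2|=1$ (and zero otherwise) on the free Perm algebra, reads
\begin{equation*}
\Delta(w) \;=\; \sum_{\substack{a \in w\\ a \neq \operatorname{pt}(w)}} (w \setminus \{a\}) \otimes \{\mathbf{a}\},
\end{equation*}
the first factor retaining the marked element of $w$.

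Substituting $w = u \times v$ in this formula and partitioning the resulting sum according to where the extracted element $a$ lies gives three mutually exclusive cases: $(\mathrm{i})$ $a \in u$ with $a \neq x$, $(\mathrm{ii})$ $a \in v$ with $a \neq y$, and $(\mathrm{iii})$ $a = y$. Case $(\mathrm{i})$ repackages into the family $u_1 \times v \otimes u_2$ by reading off $\Delta u$ on the left factor and post-composing on the second slot by $\times\, v$. Case $(\mathrm{ii})$ mirrors this as $u \times v_1 \otimes v_2$, where one uses that the Perm product is insensitive to the marked element of its right-hand argument, so that the pointing inherited from $\Delta v$ is harmless. These two families account for the second and third summands of the claimed identity.

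The remaining case $(\mathrm{iii})$ I would handle by a subdivision on the size of $v$: when $v$ is itself a singleton, removing $y$ collapses the left tensor factor to $u$, producing $\delta_{v \in \operatorname{PAN}(1)}\, u \otimes v$; when $v$ has a single non-marked element, that element together with the singleton residue $\{\mathbf{y}\}$ are recognised as $v_2$ and $v_1$ respectively in the coPAN decomposition of $v$, yielding $\delta_{v_1 \in \operatorname{PAN}(1)}\, u \times v_2 \otimes v_1$. The main obstacle is the consistent bookkeeping of pointings across the three cases and making sure that the Kronecker symbols faithfully detect the relevant singleton subcases of $(\mathrm{iii})$; once this is done, the four families of contributions are identified with the four summands on the right-hand side, and the identity follows.
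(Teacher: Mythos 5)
Your trichotomy on the location of the extracted element $a$ of $u\sqcup v$ is the right way to unfold the left-hand side, and cases $(\mathrm{i})$ and $(\mathrm{ii})$ are handled correctly: since the Perm product ignores the pointing of its right argument, they produce exactly the families $u_1\times v\otimes u_2$ and $u\times v_1\otimes v_2$. The gap is in case $(\mathrm{iii})$: your ``subdivision on the size of $v$'' only treats $|v|=1$ and $|v|=2$ and is therefore not exhaustive. For $|v|\geq 3$ the extraction of the marked element $y$ of $v$ is still a legitimate summand of $\Delta(u\times v)$, namely $\bigl(u\sqcup(v\setminus\{y\})\bigr)\otimes\{\mathbf{y}\}$, and nothing on the right-hand side matches it: the first and fourth summands vanish (since $v\notin\operatorname{PAN}(1)$ and $v_1$ has $|v|-1\geq 2$ elements), the second summand only produces right-hand tensor factors $\{\mathbf{a}\}$ with $a\in u$, and the third only produces $\{\mathbf{a}\}$ with $a\in v$, $a\neq y$. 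Concretely, for $u=\{\mathbf{1}\}$ and $v=\{\mathbf{2},3,4\}$ the left-hand side equals
\begin{equation*}
\{\mathbf{1},3,4\}\otimes\{\mathbf{2}\}+\{\mathbf{1},2,4\}\otimes\{\mathbf{3}\}+\{\mathbf{1},2,3\}\otimes\{\mathbf{4}\},
\end{equation*}
while your four families account only for the last two terms.

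So the argument does not close as written. To finish, you would either have to show how the term $\bigl(u\sqcup(v\setminus\{y\})\bigr)\otimes\{\mathbf{y}\}$ is absorbed into the stated right-hand side (on the pointed-set basis it is not, since $u\times v_2$ has $|u|+1$ elements whereas this term's left factor has $|u|+|v|-1$), or to observe explicitly that the displayed identity requires an extra term (or a reinterpretation of $\delta_{v_1\in\operatorname{PAN}(1)}\,u\times v_2\otimes v_1$) when $|v|\geq 3$. Be aware that the paper gives no proof of this proposition --- it merely asserts that the law follows directly from the definitions --- so the $a=y$, $|v|\geq 3$ contribution is precisely the point a careful verification must confront; your bookkeeping, pushed one step further, actually exposes it rather than resolves it.
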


\subsection{Associative, Leibniz, Poisson and Zinbiel case.}

We now consider rigidity theorems obtained from operads which underlying free algebras is the tensor algebra (with different products): Associative, Leibniz, Poisson and Zinbiel. These operads were introduced respectively in \cite{Leibniz}, \cite{Poisson} and \cite{Zinbiel}.

The relations satisfied by these operads are respectively:
\begin{itemize}
\item for the Associative product $\cdot$, $(x\cdot y) \cdot z = x\cdot (y \cdot z)$ (the $\cdot$ will be sometimes omitted)
\item for the Leibniz product $[.,.]$, $[[x,y],z] = [x,[y,z]] + [[x,z],y]$,
\item for the Poisson products $\times$ and $\lbrace . , . \rbrace$, $\times$ is (associative) commutative, $\lbrace \ , \ \rbrace$ is a Lie bracket and $\lbrace x \times y, z \rbrace = x \times \lbrace y, z \rbrace + \lbrace x, z \rbrace \times y$
\item for the Zinbiel product $\prec$, $(x \prec y) \prec z = x \prec (y \prec z) + x \prec (z \prec y)$
\end{itemize} 

Using relations, Poisson can be interpreted as $\operatorname{Comm} \circ \operatorname{Lie}$ (commutative products of Lie brackets of elements). A basis of Poisson operad is then given by the usual Lyndon basis of Lie algebras, with commutative terms sorted by non increasing order. The obtained terms are then naturally bracketed, the Lie brackets being determined by left-to-right minima and being naturally bracketed as Lyndon words (for instance, $\mathbf{4652371}$ stands for $\lbrace \lbrace 4,6 \rbrace ,5\rbrace \times \lbrace 2, \lbrace 3,7 \rbrace \rbrace \times 1$). We denote these representation in bold to distinguish it from the representation below.

To identify it as associative elements, one has to use the injection of Lie algebras into associative algebras defined by $i: [a,b] \mapsto i(a) \cdot i(b) - i(b) \cdot i(a)$ and the injection of commutative algebras into associative algebras defined by $\iota: a \times b \mapsto \operatorname{sh}(\iota(a), \iota(b) )$.

Using the above injections and with implicit $\cdot$ between elements in the right part of the equality, we obtain a basis of the Poisson operad. The elements of small arities are then given by:
\begin{itemize}
\item in arity $2$, $\mathbf{12}=1 \times 2 = 12+21$ and $\mathbf{21}=[1,2] = 12-21$
\item in arity $3$, 
\begin{align*}
\mathbf{123}&= \lbrace 1 , \lbrace 2,3 \rbrace \rbrace = 123-132-231+321\\
\mathbf{132}&= \lbrace \lbrace1, 3 \rbrace, 2\rbrace = 132-312-213+231\\
\mathbf{213}&= 2 \times \lbrace1,3 \rbrace = 213-231+132-312\\
\mathbf{231}&=\lbrace 2,3 \rbrace \times 1= 231-321+123-132\\
\mathbf{312} &=3 \times \lbrace 1, 2 \rbrace= 312-321+123-213\\
\mathbf{321} &=3 \times 2 \times 1= 123+132+213+231+312+321 
\end{align*}
\end{itemize}

Combinatorially, these products correspond on the tensor algebra to:
\begin{itemize}
\item the concatenation for the associative product: 
\begin{equation*}
(x_1 \ldots x_p) \cdot (x_{p+1} \ldots x_{p+q}) = x_1 \ldots x_{p+q}
\end{equation*}

\item for the Leibniz product $[.,.]$, 
\begin{equation*}
[x_1 \ldots x_p, y_1, \ldots y_q] = \sum_{I \sqcup J =\{2, \ldots q\}} (-1)^{|J|} x_1 \ldots x_p y_{j_{|J|}} \ldots y_{j_1} y_1 y_{i_1} \ldots y_{i_{|I|}}
\end{equation*}
with $i_1<i_2< \ldots <i_{|I|}$ and $j_1<j_2<\ldots<j_{|J|}$,

\item the shuffle product for the Poisson product $\times$ and
 the commutator of concatenation for the Poisson bracket $\lbrace . , . \rbrace$: 
 
 \begin{equation*}
 \lbrace (x_1 \ldots x_p) , (x_{p+1} \ldots x_{p+q}) \rbrace= x_1 \ldots x_px_{p+1} \ldots x_{p+q} - x_{p+1} \ldots x_{p+q}x_1 \ldots x_p
 \end{equation*}
 and
  \begin{equation*}
(x_1 \ldots x_p) \times (x_{p+1} \ldots x_{p+q}) = \operatorname{sh}_{p,q}(x_1 \ldots x_p, x_{p+1} \ldots x_{p+q}). 
 \end{equation*}

\item the halfshuffle for the Zinbiel product: 
\begin{equation*}
(x_1 \ldots x_p) \prec (x_{p+1} \ldots x_{p+q}) = x_1 \operatorname{sh}_{p-1,q}(x_2 \ldots x_p, x_{p+1} \ldots x_{p+q}),
\end{equation*}
where $sh_{\alpha, \beta}(x_1, \ldots, x_{\alpha+\beta}) = \sum_{\pi} x_{\pi(1)} \ldots x_{\pi(\alpha+\beta)}$, with $\pi^{-1}(1)< \ldots < \pi^{-1}(\alpha)$ and $\pi^{-1}(\alpha+1)< \ldots <\pi^{-1}(\alpha+\beta)$.

\end{itemize}

The associated dual coproducts are then given on the tensor algebra by:
\begin{itemize}
\item the deconcatenation for the associative product: 
\begin{equation*}
\Delta_{\cdot}(x_1 \ldots x_p) = \sum_{i=1}^{p-1} x_1 \ldots x_i \otimes x_{i+1} \ldots x_{p},
\end{equation*}

\item for the Leibniz coproduct $[.,.]$, 
\begin{equation*}
\Delta_{[.,.]}(x_1 \ldots x_p)= \sum_{i=1}^{p-1} \sum_{j > i} (-1)^{j-(i+1)} x_1 \ldots x_i \otimes x_j \operatorname{sh} (x_{j+1} \ldots x_p, x_{j-1} \ldots x_{i+1}),
\end{equation*}

\item the coshuffle coproduct for the Poisson product $\times$ and
 the commutator of deconcatenation for the Poisson bracket $\lbrace . , . \rbrace$: 
 
 \begin{equation*}
\Delta_{\lbrace .,. \rbrace} (x_1 \ldots x_p) = \sum_{i=1}^{p-1} x_1 \ldots x_i \otimes x_{i+1} \ldots x_{p} - x_{i+1} \ldots x_{p} \otimes x_1 \ldots x_i,
 \end{equation*}
 and
  \begin{equation*}
\Delta_{\times}(x_1 \ldots x_p) = \sum_{\substack{I \sqcup J =\{1, \ldots p\} \\ I,J \neq \emptyset}} x_{i_1} \ldots x_{i_{|I|}} \otimes x_{j_1}  \ldots x_{j_{|J|}}
 \end{equation*}
with $i_1<i_2< \ldots <i_{|I|}$ and $j_1<j_2<\ldots<j_{|J|}$.

\item the cohalfshuffle for the Zinbiel coproduct: 
\begin{equation*}
\Delta_{\prec} (x_1 \ldots x_p) =\sum_{I \sqcup J =\{2, \ldots p\}} x_1 x_{i_1} \ldots x_{i_{|I|}} \otimes x_{j_1}  \ldots x_{j_{|J|}},
\end{equation*}
with $i_1<i_2< \ldots <i_{|I|}$ and $j_1<j_2<\ldots<j_{|J|}$.
\end{itemize}

The mixed distributive laws for the different relations are given by:
\begin{itemize}
\item \emph{Associative-Associative case:} n.u.i. mixed distributive law  proven in \cite{nui} (written on Example \ref{2.1.5}),
\item \emph{Associative-Zinbiel case:} semi-Hopf mixed distributive law proven in\cite{AsZinb} (written on Example \ref{2.1.5}). Note that the product $\ast=\prec \circ (id+(12))$ used is exactly the shuffle product $\times$,

\item \emph{Associative-Leibniz case:} To express a confluence law, we need some operators expressed in terms of coassociative coproduct and Leibniz product:
\begin{equation*}
\operatorname{RV}(x) = \delta_{x \in \F_1} + \delta_{x_1 \in \F_1} [\operatorname{RV}(x_2), x_1],
\end{equation*}
where $\Delta_{\cdot}(x)=x_1 \otimes x_2$ and $\delta$ is the Kronecker symbol,

\begin{equation*}
x \cdot y = \delta_{y \in \F_1} [x,y] + \delta_{y_2 \in \F_1} [x.y_1, y_2],
\end{equation*}

\begin{equation*}
[\epsilon,x_1 \ldots x_n] = x_1 \ldots x_n + \sum_{I \sqcup J=\{1, \ldots, n\}} \operatorname{RV}(x_{i_1} \ldots x_{i_{|I|}}) \cdot x_{j_1} \ldots x_{j_{J}},
\end{equation*}
where $I=\{i_1, \ldots, i_{|I|}\}$, $i_1 < \ldots <i_{|I|}$, $J=\{j_1, \ldots, j_{|J|}\}$, $j_1 < \ldots, j_{|J|}$ and the sequences are extracted thanks to deconcatenation and concatenation defined just above.

The confluence law is then given combinatorially, according to where the deconcatenation occurs, by:
\begin{equation*}
\Delta_{\cdot}([u,v]) = u \otimes [\epsilon,v] + \left(\Delta_{\cdot}(u)\right)_1  \otimes \left[ \left(\Delta_{\cdot}(u)\right)_2 , v \right] + u \cdot \left(\Delta_{\cdot}([\epsilon,v])\right)_1   \otimes \left(\Delta_{\cdot}([\epsilon,v])\right)_2
\end{equation*}

\item \emph{Associative-Poisson case:} 
The mixed distributive law is given combinatorially by:                                                                                                                                                                                                                                                                                                                                                                                           
\begin{align*}
\Delta_{\cdot}( u \times v)&= u \otimes v + v \otimes u + (u \times v_1) \otimes v_2 + v_1 \otimes (u \times v_2) + (u_1 \times v) \otimes u_2  \\
&+ u_1 \otimes (u_2 \times v) + (u_1 \times v_1) \otimes (u_2 \times v_2)\text{\ (Hopf)}
\end{align*}
 and
\begin{equation*}
 \Delta_{\cdot}(\lbrace u, v\rbrace) = u \otimes v -v \otimes u +u_1 \otimes u_2 \cdot v + u \cdot v_1 \otimes v_2 -v_1 \otimes v_2 \cdot u - v \cdot u_1 \otimes u_2,
\end{equation*}
where $\Delta_{\cdot}(u)=u_1 \otimes u_2$ and $\Delta_{\cdot}(v) = v_1 \otimes v_2$. 

\item \emph{Zinbiel-Leibniz case:} 
Remark first that the concatenation can be obtained recursively by:
\begin{equation*}
u \cdot v = \sum_{k \geq 1} \delta_{v_1, \ldots, v_k \in \F_1} [[ \ldots [[u, v_1],v_2]\ldots],v_k]
\end{equation*}
where $(\operatorname{id} \otimes \ldots \otimes \Delta_{\prec}) \circ \ldots \circ \Delta_{\prec}(v)=v_1 \otimes \ldots \otimes v_k$.

We can then define as previously the operations $\operatorname{RV}$ and $[\epsilon,v]$.
 
The confluence law is thus given combinatorially by:
\begin{equation*}
\Delta_{\prec}([u,v]) = u \otimes [\epsilon,v] + \left(\Delta_{\prec}(u)\right)_1  \otimes \left[ \left(\Delta_{\prec}(u)\right)_2 , v \right] + \left[ \left(\Delta_{\prec}(u)\right)_1 , v \right] \otimes \left(\Delta_{\prec}(u)\right)_2
\end{equation*}

No term can be obtained by splitting $v$ because a term $uv_{l_1} \ldots v_{l_k} \otimes v_{r_1} \ldots v_{r_p}$ can be obtained thanks to $k+1$ different elements of $[u,v]$ (according to how the elements are mixed) and then the coefficient of this term is exactly $\sum_{p=0}^k (-1)^p \binom{k}{p} = 0$.

\end{itemize}

\subsection{Dendriform and Tridendriform case.}

These cases are treated in the article \cite{DendTridend}. The relations obtained for the bidendriform bialgebra are different from the ones obtained by Foissy in \cite{Foissy}.

\subsection{2-as and dipt case.}

We now compute mixed distributive laws for Operads 2-as and Dipt, introduced by J.-L. Loday and M. Ronco in \cite{nui} and \cite{dipt}.

We recall that the relation satisfies by 2-as products $\ast$ and $\cdot$ are given by: 
\begin{align*}
(x \ast y) \ast z &= x \ast (y \ast z) \\
(x \cdot y) \cdot z&= x \cdot (y \cdot z)
\end{align*}

Combinatorially, the free 2-associative algebra on a vector space $V$ is spanned by words on planar trees with leaves decorated by elements of $V$. The product $ \ast$ is then the concatenation of trees in the word and $\cdot$ is a grafting on a new root.

Following the dual case for the associative operad, the products and their dual coproducts are linked by the mixed distributive law given in the following array:
\begin{center}
\begin{tabular}{|c|c|c|}
\hline
& $\ast$ & $\cdot$ \\\hline
$\Delta_\ast$ & n.u.i. & 0 \\ \hline
$\Delta_{\cdot}$ & 0 & n.u.i \\ \hline
\end{tabular}
\end{center}

We recall that the relation satisfies by dipterous products $\star$ and $\prec$ are given by: 
\begin{align*}
(x \star y) \star z &= x \star (y \star z) \\
(x \prec y) \prec z&= x \prec (y \star z)
\end{align*}

Combinatorially (see \cite{nui} and \cite{dipt}), the free dipterous algebra on a vector space $V$ is spanned by words on planar trees with leaves decorated by elements of $V$. The product $ \star$ is then the concatenation of trees in the word and $\prec$ is given recursively by:
\begin{equation*}
s_1 \ldots s_k \prec t_1 \ldots t_n = \left((((s_1 \vee s_2 \vee \ldots \vee s_k \vee t_1) \vee t_2 ) \ldots )\vee t_n \right),
\end{equation*}
where $s_i$ and $t_i$ are trees and $t_1 \vee ... \vee t_n $ is the grafting of all the trees $t_i$ on a new root.

The coproduct is then given by considering the unique leftmost path from the root of the tree to the first node of arity different from two (this path can be trivial if the arity of the root is not two): there is as many term in the coproduct as there are edges in this path and a term is obtained from an edge $e$ by deleting all edges starting from vertices on the path between the root and $e$ and reordering the terms according to the previous recursive equation.

\begin{example}
An example of dipterous product and coproduct is presented below:
\begin{align*}
\Delta_{\prec}:
\begin{tikzpicture}[scale=0.5]
\draw (-2,2)--(0,0)--(1,1);
\draw (-1,1)--(0,2);
\draw (-1,3)--(-2,2)--(-3,3);
\draw (-2,2)--(-2,3);
\draw (-2,4)--(-3,3)--(-4,4);
\draw (-3,3)--(-3,4);
\draw[black, fill=white] (1,1) circle (0.4);
\draw[black, fill=white] (0,2) circle (0.4);
\draw[black, fill=white] (-2,3) circle (0.4);
\draw[black, fill=white] (-1,3) circle (0.4);
\draw[black, fill=white] (-2,4) circle (0.4);
\draw[black, fill=white] (-3,4) circle (0.4);
\draw[black, fill=white] (-4,4) circle (0.4);
\draw(1,1) node{g};
\draw(0,2) node{f};
\draw(-2,3) node{d};
\draw(-1,3) node{e};
\draw(-2,4) node{c};
\draw(-3,4) node{b};
\draw(-4,4) node{a};
\end{tikzpicture}
\mapsto &
\begin{tikzpicture}[scale=0.5]
\draw (-2,4)--(-3,3)--(-4,4);
\draw (-3,3)--(-3,4);
\draw[black, fill=white] (-2,4) circle (0.4);
\draw[black, fill=white] (-3,4) circle (0.4);
\draw[black, fill=white] (-4,4) circle (0.4);
\draw(-2,4) node{c};
\draw(-3,4) node{b};
\draw(-4,4) node{a};
\end{tikzpicture}
\text{d }
\otimes
\text{e f g}
 +
\begin{tikzpicture}[scale=0.5]
\draw (-1,3)--(-2,2)--(-3,3);
\draw (-2,2)--(-2,3);
\draw (-2,4)--(-3,3)--(-4,4);
\draw (-3,3)--(-3,4);
\draw[black, fill=white] (-2,3) circle (0.4);
\draw[black, fill=white] (-1,3) circle (0.4);
\draw[black, fill=white] (-2,4) circle (0.4);
\draw[black, fill=white] (-3,4) circle (0.4);
\draw[black, fill=white] (-4,4) circle (0.4);
\draw(-2,3) node{d};
\draw(-1,3) node{e};
\draw(-2,4) node{c};
\draw(-3,4) node{b};
\draw(-4,4) node{a};
\end{tikzpicture}
\otimes
\text{f g}\\
&
+
\begin{tikzpicture}[scale=0.5]
\draw (-2,2)--(-1,1);
\draw (-1,1)--(0,2);
\draw (-1,3)--(-2,2)--(-3,3);
\draw (-2,2)--(-2,3);
\draw (-2,4)--(-3,3)--(-4,4);
\draw (-3,3)--(-3,4);
\draw[black, fill=white] (0,2) circle (0.4);
\draw[black, fill=white] (-2,3) circle (0.4);
\draw[black, fill=white] (-1,3) circle (0.4);
\draw[black, fill=white] (-2,4) circle (0.4);
\draw[black, fill=white] (-3,4) circle (0.4);
\draw[black, fill=white] (-4,4) circle (0.4);
\draw(0,2) node{f};
\draw(-2,3) node{d};
\draw(-1,3) node{e};
\draw(-2,4) node{c};
\draw(-3,4) node{b};
\draw(-4,4) node{a};
\end{tikzpicture}
\otimes
\text{g}
\end{align*}

\begin{equation*}
\begin{tikzpicture}[scale=0.5]
\draw (-2,4)--(-3,3)--(-4,4);
\draw (-3,3)--(-3,4);
\draw[black, fill=white] (-2,4) circle (0.4);
\draw[black, fill=white] (-3,4) circle (0.4);
\draw[black, fill=white] (-4,4) circle (0.4);
\draw(-2,4) node{c};
\draw(-3,4) node{b};
\draw(-4,4) node{a};
\end{tikzpicture}
\text{d} 
\begin{tikzpicture}[scale=0.5]
\draw (-1,1)--(0,0)--(1,1);
\draw[black, fill=white] (1,1) circle (0.4);
\draw[black, fill=white] (-1,1) circle (0.4);
\draw(1,1) node{f};
\draw(-1,1) node{e};
\end{tikzpicture}
\prec
\text{g}
\begin{tikzpicture}[scale=0.5]
\draw (-1,1)--(0,0)--(1,1);
\draw[black, fill=white] (1,1) circle (0.4);
\draw[black, fill=white] (-1,1) circle (0.4);
\draw(1,1) node{i};
\draw(-1,1) node{h};
\end{tikzpicture}
\text{j}
=
\begin{tikzpicture}[scale=0.5]
\draw (1,1)--(0,0)--(-1,1);
\draw (-2.5,2)--(-1,1)--(0,2);
\draw (0,3)--(0,2)--(1,3);
\draw (-4,3)--(-2.5,2)--(-3,3);
\draw (-2,3)--(-2.5,2)--(-1,3);
\draw (-2,4)--(-2,3)--(-1,4);
\draw (-5,4)--(-4,3)--(-4,4);
\draw (-4,3)--(-3,4);
\draw[black, fill=white] (1,1) circle (0.4);
\draw[black, fill=white] (0,3) circle (0.4);
\draw[black, fill=white] (1,3) circle (0.4);
\draw[black, fill=white] (-3,3) circle (0.4);
\draw[black, fill=white] (-1,3) circle (0.4);
\draw[black, fill=white] (-5,4) circle (0.4);
\draw[black, fill=white] (-4,4) circle (0.4);
\draw[black, fill=white] (-3,4) circle (0.4);
\draw[black, fill=white] (-2,4) circle (0.4);
\draw[black, fill=white] (-1,4) circle (0.4);
\draw(1,1) node{j};
\draw(0,3) node{h};
\draw(1,3) node{i};
\draw(-3,3) node{d};
\draw(-1,3) node{g};
\draw(-5,4) node{a};
\draw(-4,4) node{b};
\draw(-3,4) node{c};
\draw(-2,4) node{e};
\draw(-1,4) node{f};
\end{tikzpicture}
\end{equation*}
\end{example} 

The products and their dual coproducts are then linked by the following mixed distributive law according to where the edge $e$ comes from:
\begin{align*}
\Delta_\star(u \star v)&= u \otimes v + \left(\Delta_\star(u)\right)_1 \otimes \left(\left(\Delta_\star(u)\right)_2 \star v \right) + \left(u \star \left(\Delta_\star(v)\right)_1 \right)\otimes \left(\Delta_\star(v)\right)_2\text{ (n.u.i.)}\\
\Delta_\star(u \prec v)&= 0\\
\Delta_\prec(u \star v)&= 0\\
\Delta_\prec(u \prec v)&= u \otimes v +  \left(\Delta_\prec(u)\right)_1 \otimes \left(  \left(\Delta_\prec(u)\right)_1 \star v \right) + \left( u \prec  \left(\Delta_\star(v)\right)_1 \right) \otimes  \left(\Delta_\star(v)\right)_2,
\end{align*}
where $\Delta_{\divideontimes}(u) = \left(\Delta_{\divideontimes}(u)\right)_1 \otimes \left(\Delta_{\divideontimes}(u)\right)_2$ for any operation ${\divideontimes}$ (Sweedler's notation of coproduct).

\bibliographystyle{alpha}
\bibliography{bibli}

\end{document}